\documentclass[10pt,a4paper,reqno]{amsart}

\usepackage[utf8]{inputenc}
\usepackage[T1]{fontenc}
\usepackage{xspace,
	amsthm,
	amsmath,
	amssymb,
	bbm,
	tikz,
	graphicx,
	subfig,
	xcolor,
	keyval}
\DeclareGraphicsExtensions{.png, .pdf}

\usepackage[hidelinks]{hyperref}

\usepackage{geometry}
\geometry{hmargin={2.3cm,2.3cm}}
\geometry{vmargin={5cm,3cm}}


\tikzstyle{nodino}=[circle,draw,fill,inner sep=0pt,minimum size=0.5mm]
\tikzstyle{infinito}=[circle,inner sep=0pt,minimum size=0mm]
\tikzstyle{nodo}=[circle,draw,fill,inner sep=0pt, minimum size=0.5*width("k")]
\tikzstyle{nodo_vuoto}=[circle,draw,inner sep=0pt, minimum size=0.5*width("k")]
\usetikzlibrary{graphs}
\tikzset{every loop/.style={min distance=10mm,in=300,out=240,looseness=10}}
\tikzset{place/.style={circle,thick,draw=blue!75,fill=blue!20,minimum
		size=6mm}}
\tikzset{place2/.style={circle,thick,draw=red!75,fill=red!20,minimum
		size=6mm}}


\newcommand{\rr}{{\mathbb R}}

\newcommand{\G}{{\mathcal{G}}}
\newcommand{\udot}{\|u'\|_{2,\mathcal{G}}}
\newcommand{\uLp}{\|u\|_{p,\mathcal{G}}}
\newcommand{\uLtwo}{\|u\|_{2,\mathcal{G}}}
\newcommand{\HmuG}{H_\mu^1(\mathcal{G})}
\newcommand{\uLsix}{\|u\|_{6,\mathcal{G}}}

\newcommand{\uLsixcompact}{\|u\|_{6,\K}}

\newcommand{\K}{\mathcal{K}}
\newcommand{\h}{\mathcal{H}}

\newcommand{\EEK}{\mathcal{E}_\G(\mu,\K)}
\newcommand{\dx}{\,dx}
\newcommand{\ep}{\varepsilon}
\newcommand{\vv}{\mathrm{v}}
\newcommand{\f}[2]{\frac{#1}{#2}}
\newcommand{\tf}[2]{\tfrac{#1}{#2}}
\newcommand{\wt}[1]{\widetilde{#1}}


\theoremstyle{plain} 
\newtheorem{thm}{Theorem}[section] 
 
\newtheorem{lem}[thm]{Lemma} 
\newtheorem{prop}[thm]{Proposition} 

\theoremstyle{definition}

\theoremstyle{definition}

\theoremstyle{remark} 
\newtheorem{rem}{Remark}[section]


\begin{document}

\title[$L^2$-critical NLS on graphs with localized nonlinearity]{$L^2$-critical NLS on noncompact metric graphs with localized nonlinearity: topological and metric features}

\author[S. Dovetta]{Simone Dovetta}
\address{Politecnico di Torino, Dipartimento di Scienze Matematiche ``G.L. Lagrange'', Corso Duca degli Abruzzi, 24, 10129, Torino, Italy}
\address{Universit\`a degli Studi di Torino, Dipartimento di Matematica ``G. Peano'', Via Carlo Alberto, 10, 10123, Torino, Italy}
\email{simone.dovetta@polito.it}
\author[L. Tentarelli]{Lorenzo Tentarelli}
\address{Universit\`{a} degli Studi di Roma``La Sapienza'', Dipartimento di Matematica ``G. Castelnuovo'', P.le Aldo Moro, 5, 00185, Roma, Italy.}
\email{tentarelli@mat.uniroma1.it}

\date{\today}

\begin{abstract} 
 Carrying on the discussion initiated in \cite{DT-p}, we investigate the existence of ground states of prescribed mass for the $L^2$-critical NonLinear Schr\"odinger Equation (NLSE) on noncompact metric graphs with localized nonlinearity. Precisely, we show that the existence (or nonexistence) of ground states mainly depends on a parameter called reduced critical mass, and then we discuss how the topological and metric features of the graphs affect such a parameter, establishing some relevant differences with respect to the case of the extended nonlinearity studied by \cite{AST-CMP}. Our results rely on a thorough analysis of the optimal constant of a suitable variant of the $L^2$-critical Gagliardo-Nirenberg inequality.
\end{abstract}

\maketitle
	
	\vspace{-.5cm}
	{\footnotesize AMS Subject Classification: 35R02, 35Q55, 81Q35, 35Q40, 49J40.}
    \smallskip

    {\footnotesize Keywords: metric graphs, NLS, ground states, localized nonlinearity, $L^2$-critical case.}
    
	
	\section{Introduction}
	
	In this paper, we aim at discussing the existence of ground states for the NonLinear Schr\"odinger Equation (NLSE) on metric graphs with localized nonlinearities.
	
	We briefly recall that a metric graph is the locally compact metric space which arises as one endows a \emph{multigraph} $\G=(\mathrm{V},\mathrm{E})$ with a parametrization that associates each bounded edge $e\in\mathrm{E}$ with a closed and bounded interval $I_e=[0,\ell_e]$ of the real line, and each unbounded edge $e\in\mathrm{E}$ with a half-line $I_e=\rr^+$ (for details see \cite{AST-CVPDE,BK} and references therein). Consistently, functions on metric graphs $u=(u_e)_{e\in\mathrm{E}}:\G\to\rr$ are families of functions defined on each edge $u_e:I_e\to\rr$ in such a way that $u_{|_e}=u_e$. Lebesgue spaces are, then, given by
	\[
	 L^p(\G):=\bigoplus_{e\in\mathrm{E}}L^p(I_e),\qquad p\in[1,\infty],
	\]
	while
	\[
	 H^1(\G):=\bigg\{u\in\bigoplus_{e\in\mathrm{E}}H^1(I_e)\,:\,u \text{ is continuous on }\G\bigg\},
    \]
    both equipped with the natural norms, denoted by $\|u\|_{p,\G}$ and $\|u\|$ respectively.  In addition, in the following we limit ourselves to the study of those graphs $\G$ such that
	\begin{itemize}
	 \item[\textbf{(A)}] $\G$ is \emph{connected}, \emph{noncompact},  with a \emph{finite number of edges} and with \emph{non-empty compact core} $\K$ (which is the subgraph of the bounded edges of $\G$).
	\end{itemize}

	\medskip
    In view of this, the central issue discussed in the present paper is that of the existence of minimizers for the $L^2$-\emph{critical} NLS energy functional 
	\begin{equation}
		\label{EQ-energy def}
		E(u,\K):=\frac{1}{2}\int_\G|u'|^2\,dx-\frac{1}{6}\int_\K|u|^6\,dx
	\end{equation}
	under the \emph{mass constraint}
	\[
		\|u\|_{2,\G}^2=\mu>0\,,
	\]
	that is, the so-called \emph{ground states}. In other words, we discuss the existence of functions $u\in\HmuG$ such that
	\begin{equation}
		\label{EQ-minimum problem def}
		E(u,\K)=\mathcal{E}_\G(\mu,\K):=\inf_{v\in\HmuG}E(v,\K),
	\end{equation}
    with    
    \[
    	\HmuG=\big\{u\in H^1(\G)\,:\,\|u\|_{2,\G}^2=\mu\big\}\,.
    \]
    Such functions are known (the proof is completely analogous to that of \cite[Proposition 3.3]{AST-CVPDE}) to be $L^2$-solutions of
    \[
     -\Delta_\G u+\lambda u=\chi_\K\,|u|^{4}\,u,\qquad\lambda\in\rr^+,
    \]
    where $\chi_\K$ is the characteristic function of $\K$ and $-\Delta_\G$ is the so-called \emph{Kirchhoff} Laplacian, namely the operator defined by
    \begin{gather}
     -\Delta_\G u_{|\mathring{I}_e}:=-u_e''\nonumber\\[.2cm]
     \label{eq-domkirc}\mathrm{dom}(-\Delta_\G):=\bigg\{u\in H^1(\G):u_e\in H^2(I_e),\:\forall e\in\mathrm{E},\:\text{and}\:\sum_{e\succ v}\tf{du_e}{dx_e}(\vv)=0,\:\forall\vv\in\K\bigg\}
    \end{gather}
    with $e\succ v$ meaning that the edge $e$ is incident at the vertex $\vv$ and $\tf{du_e}{dx_e}(\vv)$ standing for $u_e'(0)$ or $-u_e'(\ell_e)$ according to whether $x_e$ is equal to $0$ or $\ell_e$ at $\vv$. It is, also, straightforward that $\psi(t,x):=e^{\imath\lambda t}u(x)$, with $u$ ground state, is a \emph{stationary solution} of the time-dependent nonlinear Schr\"odinger equation
    \begin{equation}
     \label{eq-timeNLS}
     \imath\f{\partial\psi}{\partial t}=-\Delta_\G\psi-\chi_\K\,|\psi|^4\,\psi.
    \end{equation}
	
	\medskip
	Problem \eqref{EQ-minimum problem def} was first proposed in \cite{DT-p}, in the ``toy'' model of the \emph{tadpole} graph (see Figure \ref{fig-tadpole}). Here we aim at extending that seminal result in order to present an (almost) complete classification of the whole phenomenology.
	
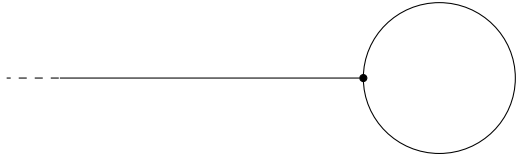
\begin{figure}
 \centering
\begin{tikzpicture}
\node at (-1,0) [nodo] (-10) {};
\node at (-5,0) [infinito] (-50) {};
\node at (-5.7,0) [infinito]  (-60) {};
\draw (0,0) circle (1cm);
\draw [-] (-10) -- (-50);
\draw [dashed] (-50) -- (-60);
\end{tikzpicture}
\caption{\emph{tadpole} graph.}
\label{fig-tadpole}
\end{figure}
	
	 The study of the NLSE on graphs has recently become a quite popular research topic and then the literature has hugely increased. We mention, e.g., \cite{ADST,AST-CVPDE,AST-JFA,CFN-N,KP-JDE,LLS-JMAA} (and the references therein) for problems involving the nonlinearity extended to the whole graph in the $L^2$-subcritical case, and \cite{CDS-p,D-JDE,D2-p,MP-AMRX} for the discussion of the Schr\"odinger equation on compact graphs. We also mention two recent works on some other dispersive equations: \cite{MNS-APDE} for the \emph{KdV equation} and \cite{BCT-p} for the \emph{Nonlinear Dirac equation}.
	
	Furthermore, with respect to the present paper, it is worth recalling \cite{AST-CMP}, which deals with the existence of $L^2$-critical ground states in the extended case, where \eqref{EQ-energy def} is replaced by
	\begin{equation}
	 \label{eq-en_extended}
	 E(u):=\frac{1}{2}\int_\G|u'|^2\,dx-\frac{1}{6}\int_\G|u|^6\,dx,
	\end{equation}
	 and \cite{ST-JDE,ST-NA,T-JMAA}, which deals with the localized $L^2$-\emph{subcritical} problem, where \eqref{EQ-energy def} is replaced by 
	\[
		E(u,\K,p):=\frac{1}{2}\int_\G|u'|^2\,dx-\frac{1}{p}\int_\K|u|^p\,dx,\qquad p\in(2,6)\,.
	\]
	
\medskip
Before stating our main theorems, let us mention some classical results on the $L^2$-critical issue with extended nonlinearity on the real line and the half-line (see \cite{C}). Precisely, it is well known that, letting
	 \[
	 \mathcal{E}_\G(\mu):=\inf_{u\in\HmuG}E(u),
	 \]
	 with $E(\cdot)$ defined by \eqref{eq-en_extended} and $\G=\rr$, then a threshold phenomenon shows up, that is
	 \begin{equation}
	 \label{EQ-inf rr}
	 	\mathcal{E}_\rr(\mu)=\begin{cases}
	 	0 & \text{if }\mu\leq\mu_\rr\\
	 	-\infty & \text{if }\mu>\mu_\rr\,,
	 	\end{cases}
	 \end{equation}
    where $\mu_\rr=\pi\sqrt{3}/2$ and is usually called \emph{critical mass} of the real line. Moreover, $\mathcal{E}_\rr(\mu)$ is attained if and only if $\mu=\mu_\rr$, and a whole family of ground states exists, the \textit{solitons} $\{\phi_\lambda\}_{\lambda>0}$, given by
	 \begin{equation}
	 	\label{EQ-def soliton lambda}
	 	\phi_\lambda(x):=\sqrt{\lambda}\phi_1(\lambda x),\qquad\phi_1(x):=\text{sech}^{1/2}(2x/\sqrt{3})\,,
	 \end{equation}
	 and satisfying $E(\phi_\lambda,\rr)=0$, for every $\lambda>0$.
	 
	 Analogously, when $\G=\rr^+$, the portrait is the same with $\mu_\rr$ replaced by the critical mass of the half-line $\mu_{\rr^+}=\mu_\rr/2=\sqrt{3}\pi/4$. Furthermore,  ground states exist (again) only at the critical mass and they are the so-called \textit{half-solitons}, i.e. the restrictions of $\phi_\lambda$ to $\rr^+$.
	 
	 \medskip
	 We can now state the first result of the paper, which provides a topological classification of metric graphs.
	 
	 In the following, recall that a metric graph $\G$ is said to admit a \emph{cycle covering} if and only if each edge of $\G$ belongs to a cycle,  where a cycle can be either a loop, that is a closed path of consecutive bounded edges, or an unbounded path joining the endpoints of two distinct half-lines, which are then identified as a single vertex at infinity (see \cite{AST-JFA,AST-CMP} for further details). Finally, by a {\em terminal edge} we mean any edge ending with a vertex of degree 1.

    \begin{thm}
    	\label{THM1}
    	Let $\G$ satisfy \textbf{(A)}. Then, there exists $\mu_\K\in[\mu_{\rr^+},\mu_\rr]$ such that
    	
    	\begin{equation}
    		\label{EQ- THM1 inf}
    		\mathcal{E}_\G(\mu,\K)\begin{cases}
    		=0 & \text{if }\mu\leq\mu_\K\\
    		<0 & \text{if }\mu\in(\mu_\K,\mu_\rr]\\
    		=-\infty & \text{if }\mu>\mu_\rr\,.
    		\end{cases}
    	\end{equation}
    	Moreover,
    	\begin{itemize}
    		\item[(i)] if $\G$ has at least one terminal edge (see, e.g., Figure \ref{FIGURE-theorem}(a)), then
    		\[
    		 \mu_\K=\mu_{\rr^+},\qquad \EEK=-\infty\quad\text{for all }\mu>\mu_\K,
    		\]
            and ground states never exist;
    		\item[(ii)] if $\G$ admits a cycle-covering (see, e.g., Figure \ref{FIGURE-theorem}(b)), then
    		\[
    		 \mu_\K=\mu_\rr
    		\]
    		and ground states never exist;
    		\item[(iii)] if $\G$ has exactly one half-line and no terminal edges (see, e.g., Figure \ref{FIGURE-theorem}(c)), then 
    		\begin{equation}
    		\label{EQ-bound mu tilde 1 half-line}
    			\mu_{\rr^+}<\mu_\K<\sqrt{3}
    		\end{equation}
    		and ground states of mass $\mu$ exist if and only if $\mu\in[\mu_\K,\mu_\rr]$.
    		\item[(iv)] if $\G$ has neither a terminal edge, nor a cycle-covering, and at least two half-lines (see, e.g., Figure \ref{FIGURE-theorem}(d)), then
    		\begin{equation}
    		\label{EQ-bound mu tilde resto del mondo}
    			\mu_{\rr^+}<\mu_\K\leq\mu_\rr
    		\end{equation}
    		and ground states of mass $\mu$ exist if and only if $\mu\in[\mu_\K,\mu_\rr]$, provided that $\mu_\K\neq\mu_\rr$.
    	\end{itemize}
    \end{thm}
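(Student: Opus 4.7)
My plan is to analyze the behavior of the localized energy via a reduced Gagliardo--Nirenberg inequality combined with a carefully chosen rescaling of (half-)solitons. Writing $E(u,\K) = \tfrac12\|u'\|_{2,\G}^2 - \tfrac16\|u\|_{6,\K}^6$, the general graph-wide inequality $\|u\|_{6,\G}^6\leq(3/\mu_{\rr^+}^2)\|u'\|_{2,\G}^2\|u\|_{2,\G}^4$, proved via symmetric rearrangement onto $\rr^+$, forces $E(u,\K)\geq\tfrac12(1-\mu^2/\mu_{\rr^+}^2)\|u'\|_{2,\G}^2\geq 0$ whenever $\mu\leq\mu_{\rr^+}$; conversely, testing $E$ on a profile spreading its mass along any half-line of $\G$ (which exists by \textbf{(A)}) yields $\mathcal{E}_\G(\mu,\K)\leq 0$ for every $\mu>0$. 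For $\mu>\mu_\rr$, placing a rescaled soliton $\alpha\phi_\lambda(\cdot-x_0)$ with $\alpha^2=\mu/\mu_\rr>1$ and $x_0$ in the interior of a bounded edge of $\K$ gives energy $\tfrac{\alpha^2(1-\alpha^4)}{2}\|\phi_\lambda'\|_2^2\to-\infty$ as $\lambda\to\infty$, so that $\mathcal{E}_\G(\mu,\K)=-\infty$. Setting $\mu_\K:=\inf\{\mu>0 : \mathcal{E}_\G(\mu,\K)<0\}$ and using the monotonicity of $\mu\mapsto\mathcal{E}_\G(\mu,\K)$ (any extra mass can be added via a bump on a half-line far from $\K$ at negligible kinetic cost), the trichotomy \eqref{EQ- THM1 inf} and the bound $\mu_\K\in[\mu_{\rr^+},\mu_\rr]$ follow.

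For case (i) I repeat the above rescaling argument with a half-soliton $\alpha\phi_\lambda^+$ placed at the free tip of a terminal edge (an $H^1$ function has no boundary condition at a degree-one vertex): for any $\mu>\mu_{\rr^+}$ I pick $\alpha^2=\mu/\mu_{\rr^+}>1$ and send $\lambda\to\infty$, obtaining $\mathcal{E}_\G(\mu,\K)=-\infty$, whence $\mu_\K=\mu_{\rr^+}$ and no ground state can exist. For case (ii), the cycle covering permits rearrangement onto $\rr$ rather than $\rr^+$, yielding the sharper bound $\|u\|_{6,\G}^6\leq(3/\mu_\rr^2)\|u'\|_{2,\G}^2\|u\|_{2,\G}^4$; this gives $\mathcal{E}_\G(\mu,\K)=0$ for every $\mu\leq\mu_\rr$, and equality at $\mu=\mu_\rr$ would force $u\equiv 0$ outside the bounded set $\K$ while simultaneously being (up to rearrangement) a soliton on $\rr$, which is a contradiction ruling out ground states.

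For cases (iii)--(iv) the bulk of the work is to show that when $\mu\in[\mu_\K,\mu_\rr]$ the infimum is attained. I would run a concentration--compactness scheme: any minimizing sequence $\{u_n\}\subset\HmuG$ is bounded in $H^1(\G)$ by the GN bound, so $u_n\rightharpoonup u$; vanishing is excluded because $\mathcal{E}_\G(\mu,\K)<0$ prevents uniform smallness on $\K$, and dichotomy is excluded via the strict subadditivity $\mathcal{E}_\G(\mu,\K)<\mathcal{E}_\G(\nu,\K)$ for $0<\nu<\mu$, which holds because mass escaping along the half-lines contributes only non-negative kinetic energy (the nonlinearity being localized). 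Hence $u_n\to u$ strongly in $L^2(\G)\cap L^6(\K)$, giving a ground state. The strict upper bounds $\mu_\K<\mu_\rr$ in (iv) and $\mu_\K<\sqrt{3}$ in (iii) come from exhibiting carefully designed trial functions of the prescribed mass with strictly negative energy, built by splitting a soliton-type profile across the half-line(s) and the interior of $\K$. The main obstacle is precisely establishing these strict upper bounds, in particular the explicit constant $\sqrt{3}$ in case (iii), which must be obtained via a concrete variational comparison exploiting the interaction between the single unbounded end and the cyclic structure forced on $\K$ by the absence of terminal edges, together with the strict subadditivity in case (iv), where the joint absence of terminal edges and of a cycle covering forces the reduced GN constant on $\G$ to sit strictly between the $\rr$ and $\rr^+$ benchmarks.
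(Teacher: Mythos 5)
Your overall framework (a reduced Gagliardo--Nirenberg constant, rearrangements onto $\rr^+$ and $\rr$, rescaled solitons supported inside bounded edges, and the topological case split) matches the paper's, and your treatment of the trichotomy \eqref{EQ- THM1 inf} and of cases (i)--(ii) is essentially correct in outline (modulo the need to truncate the non-compactly-supported solitons, and the omitted nonexistence argument at the endpoint $\mu=\mu_{\rr^+}$ in case (i), where $\EEK=0$ is finite and the paper rules out a minimizer by showing it would produce a compactly supported optimizer of $C_{\rr^+}$). The real gaps are in cases (iii)--(iv). First, boundedness of minimizing sequences does \emph{not} follow ``by the GN bound'': for $\mu\in(\mu_\K,\mu_\rr]$ the reduced inequality only yields $E(u,\K)\geq\tfrac12\big(1-\tfrac{C_\K\mu^2}{3}\big)\udot^2$ with a \emph{negative} coefficient, so it bounds neither $\EEK$ from below nor $\|u_n'\|_{2,\G}$. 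This is precisely the $L^2$-critical difficulty. The paper resolves it with the modified Gagliardo--Nirenberg inequality \eqref{EQ-modified GN} of \cite{AST-CMP}: along a negative-energy sequence the parameter $\theta_{u_n}$ is bounded away from zero, hence $\mu-\theta_{u_n}<\mu_\rr$ strictly and both finiteness of the infimum and $H^1$-boundedness follow. Without this (or an equivalent) tool your concentration--compactness scheme cannot start.

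Second, your scheme excludes vanishing ``because $\mathcal{E}_\G(\mu,\K)<0$'', which says nothing at the endpoint $\mu=\mu_\K$, where $\mathcal{E}_\G(\mu_\K,\K)=0$; yet the theorem asserts existence there in cases (iii)--(iv). The paper handles this separately: Lemma \ref{LEM-C K attained} shows that $C_\K$ is attained whenever $C_\K\neq C_\rr$ and $\G$ has no terminal edge, and the mass-normalized optimizer is then a zero-energy ground state. Third, the strict bounds $\mu_\K>\mu_{\rr^+}$ (i.e.\ $C_\K<C_{\rr^+}$) in (iii)--(iv) and $\mu_\K<\sqrt{3}$ (i.e.\ $C_\K>1$) in (iii) cannot come from ``carefully designed trial functions with strictly negative energy'': a trial function of mass $\sqrt{3}$ with nonpositive energy only gives the non-strict bound $C_\K\geq1$, and no trial function can give the \emph{upper} bound $C_\K<C_{\rr^+}$. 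The paper proves $C_\K<C_{\rr^+}$ by contradiction via the attainment lemma plus rearrangement rigidity (an optimizer would be supported on $\K$ and its decreasing rearrangement would be a compactly supported optimizer of $C_{\rr^+}$), and proves $C_\K>1$ by computing $Q$ on the family $u_{c,\alpha}$ (constant on $\K$, exponential on the half-line), which realizes exactly the value $1$, and then excluding equality because such functions violate the Kirchhoff condition in the Euler--Lagrange equation for $Q$. You correctly identify these strict inequalities as the main obstacle, but the proposal does not supply the ideas needed to close them.
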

    
	\begin{figure}[t]
		\centering
		\subfloat[][a graph with a terminal edge]
		{\includegraphics[width=.45\columnwidth]{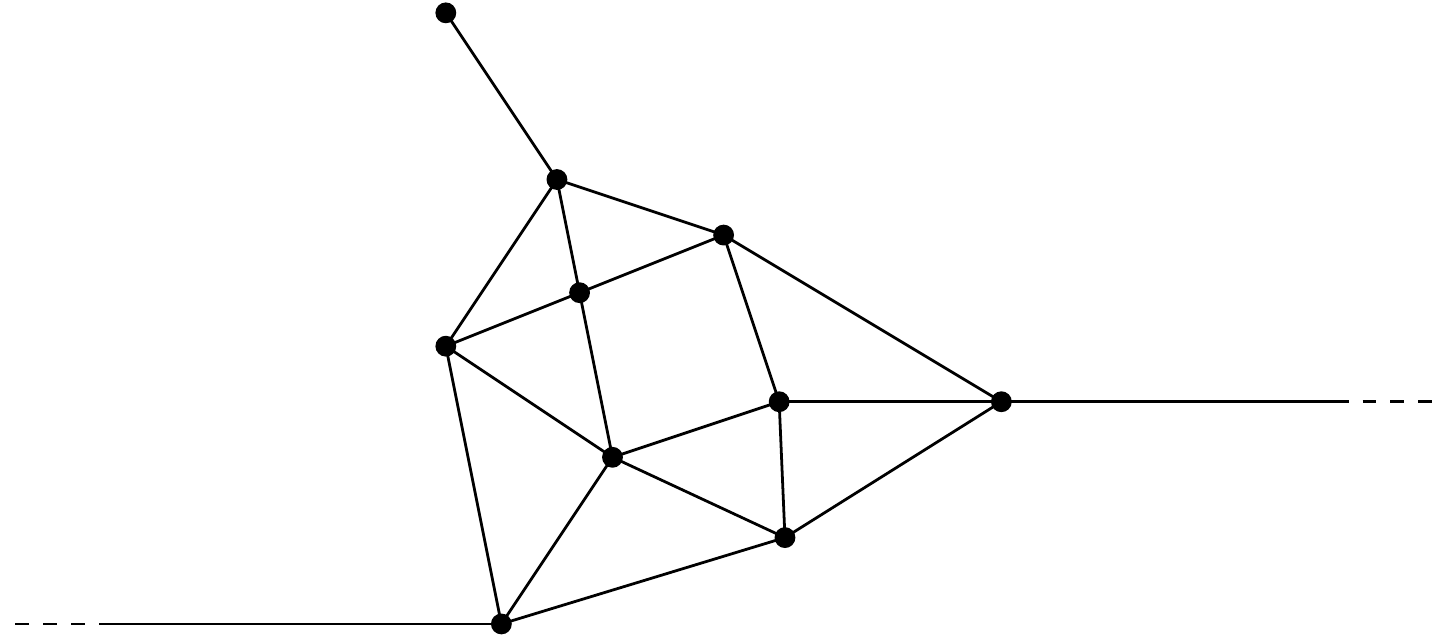}} \qquad\qquad
		\subfloat[][a graph with a cycle-covering]
		{\includegraphics[width=.45\columnwidth]{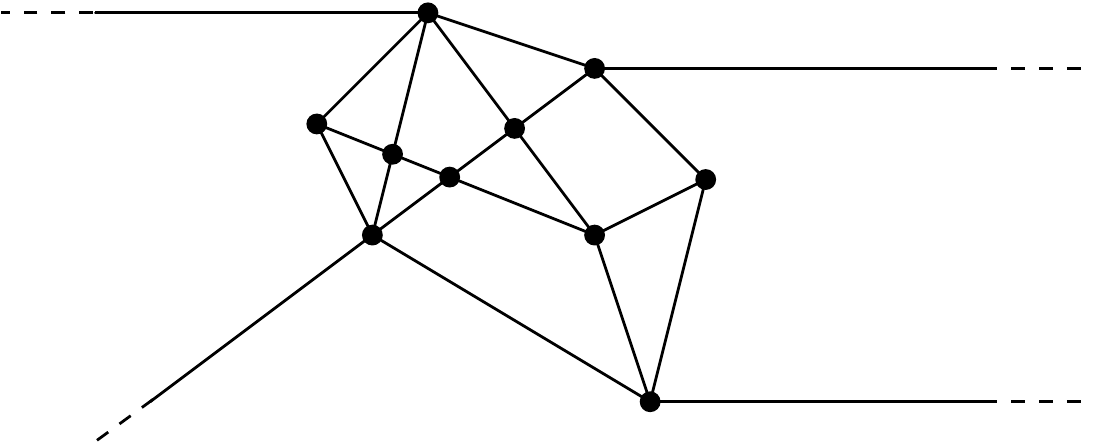}}\qquad
		
		\medskip
		\medskip
		
		\subfloat[][a graph with one half-line and no terminal edge]
		{\includegraphics[width=.45\textwidth]{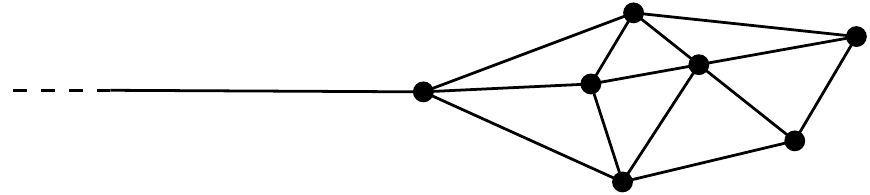}}\qquad\qquad
		\subfloat[][a graph with no terminal edge, nor cycle-covering and at least two half-lines]
		{\includegraphics[width=.45\columnwidth]{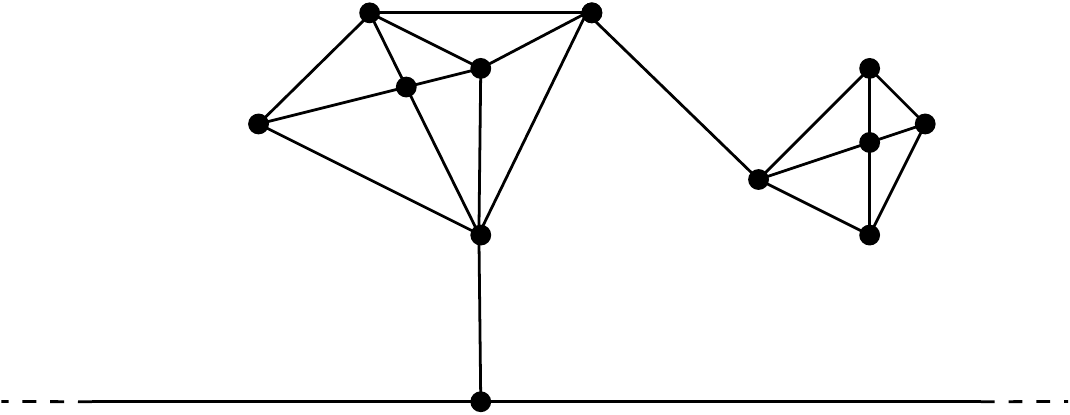}}
		\caption{examples of cases \textit{(i)-(iv)} in Theorem \ref{THM1}.}
		\label{FIGURE-theorem}
	\end{figure}

	 Preliminarily, let us point out that assumption $\mu_\K\neq\mu_\rr$ in case (iv) is consistent, in the sense that one can easily exhibit examples of graphs fulfilling it (see, for instance, the \emph{signpost} graph of Figure \ref{FIGURE-signpost}, when the vertical edge is large enough, as in Proposition \ref{PROP-METRIC SIGNPOST 1}).

	It turns out that the actual value of $\mu_\K$, that we refer to as the \emph{reduced critical mass} of $\G$ in the following, is strictly related to a Gagliardo-Nirenberg-type inequality, i.e.
	\begin{equation}
	 \label{eq-red_mass}
     \mu_\K:=\sqrt{\frac{3}{C_{\K}}}\,,
    \end{equation}
    where $C_\K$ denotes the sharpest constant of
	\begin{equation}
	 \label{eq-red_GN}
    	\uLsixcompact^6\leq C_{\K}\uLtwo^4\udot^2\,,\qquad\forall u\in H^1(\G)\,,
    \end{equation}
    namely
    \begin{equation}
    	\label{EQ-def C_6.K}
    	C_{\K}:=\sup_{u\in H^1(\G)}Q(u)\,,\qquad\text{where}\qquad Q(u):=\frac{\uLsixcompact^6}{\uLtwo^4\udot^2}\,.
    \end{equation}
	
	The dependence of the existence of ground states on a critical mass is a common feature with the issue of the extended nonlinearity discussed in \cite{AST-CMP}. However, some major differences appear. First, in \cite{AST-CMP} $\mu_\K$ is replaced by
	 \[
	  \mu_\G:=\sqrt{\frac{3}{C_\G}}\,,
	 \]
	 $C_\G$ being the optimal constant of the standard Gagliardo-Nirenberg inequality
	 \begin{equation}
	 	\label{EQ-GN 6}
	 	\|u\|_{6,\G}^6\leq C_\G\|u\|_{2,\G}^4\|u'\|_{2,\G}^2\,,\qquad\forall u\in H^1(\G)\,.
	 \end{equation}
	 Clearly,
	 \[
	 	C_{\K}\leq C_\G\,,
	 \]
	 so that, by definition,
	 \[
	 	\mu_\G\leq\mu_\K\,.
	 \]
	 
	 Furthermore, even though \cite{AST-CMP} shares the same classification of the problem according to (i)--(iv), a rather different phenomenology shows up in the localized setting.
	 \begin{itemize}
	  \item Cases (i) and (ii) are almost identical for the localized and the extended issues if one replaces $\mu_\K$ with $\mu_\G$, up to the fact that in \cite{AST-CMP} there exist graphs supporting ground states at $\mu=\mu_\G$ . Here, on the contrary, the localization of the nonlinearity prevents the existence of ground states.
	  \item In case (iii) the difference with respect to the extended nonlinearity is more remarkable. Indeed, in \cite{AST-CMP}, independently of further properties of the graphs, the critical mass satisfies $\mu_\G=\mu_{\rr^+}$ and the interval of masses for which one has existence of ground states is $(\mu_{\rr^+},\mu_\rr]$. On the contrary, in Theorem \ref{THM1} the reduced critical mass $\mu_\K$ is strictly greater than the critical mass of the half-line and hence the existence interval is smaller. Nevertheless, it is provided to be nonempty, since $\mu_\K<\sqrt{3}<\mu_\rr$. In addition, it is worth mentioning that the fact that $\mu_\K>\mu_{\rr^+}$ allows us to treat also the endpoint case $\mu=\mu_\K$, which is instead open for the extended problem.
	  \item Case (iv) is analogous to the extended case, again with $\mu_\K$ in place of $\mu_\G$. In particular, in both the situations one has to prescribe the (reduced) critical mass to be different from $\mu_\rr$ in order to guarantee the existence of ground states. However, one can see that if $\mu_\K\neq\mu_\rr$, then $\mu_\G<\mu_\K$. Indeed, assume by contradiction $\mu_\G=\mu_\K$, so that $\mathcal{E}_\G(\mu_\K,\K)=\mathcal{E}_\G(\mu_\G)=0$. Item (iv) of Theorem \ref{THM1} entails that there exists a minimizer $u\in H_{\mu_\G}^1(\G)$, and thus $E(u,\K)=E(u)=0$, whence $\|u\|_{6,\K}=\|u\|_{6,\G}$. Hence, $u$ is a ground state of the extended problem, as well, and it is supported on the sole compact core $\K$. Since this is impossible due to standard regularity properties of global minimizers (see \cite{AST-CVPDE}), there results $\mu_\G<\mu_\K$. Consequently, as for case (iii), the existence interval is strictly smaller for the problem with a concentrated nonlinearity.
	 \end{itemize}

	\medskip
	 It is also worth highlighting that, while the fact that the topology of the graph plays a crucial role is a common feature in the context of extended nonlinearities, it is new in the localized setting. Indeed, in the subcritical case the problem of the ground states with localized nonlinearity is affected only by metric properties of the graph (see, e,g, \cite[Theorems 3.3 and 3.4, and Remark 3.8]{T-JMAA}).
	 
	 Nevertheless, it turns out that the metric features preserve their importance even in the critical problem, at least in cases (iii) and (iv), as shown by the following

	\begin{thm}
		\label{THM2}
		Estimates \eqref{EQ-bound mu tilde 1 half-line} are sharp in general; i.e., for every $\varepsilon>0$ there exist two non-compact metric graphs $\G_\varepsilon^1,\G_\varepsilon^2$ (with compact cores $\K^1_\ep,\K^2_\ep$, respectively), with exactly one half-line and no terminal edges, such that 
		\[
		\mu_{\K_\varepsilon^1}\leq \mu_{\rr^+}+\varepsilon\qquad\text{and}\qquad
		\mu_{\K_\varepsilon^2}\geq\sqrt{3}-\varepsilon\,.
		\]
	\end{thm}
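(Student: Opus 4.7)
The plan is to exhibit two explicit families of metric graphs in case (iii) whose reduced critical masses saturate the two extremes of the interval in \eqref{EQ-bound mu tilde 1 half-line}. Since $\mu_\K=\sqrt{3/C_\K}$, with $C_\K$ the optimal constant in the localized Gagliardo--Nirenberg inequality \eqref{eq-red_GN}, this amounts to producing graphs with $C_\K$ approaching $C_{\rr^+}=3/\mu_{\rr^+}^2$ from below (for $\G_\ep^1$) and $1$ from above (for $\G_\ep^2$).

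For $\G_\ep^1$ I take the ``lollipop'' graph made of the half-line $H$ attached at a vertex $v_0$, a segment $e$ of large length $L$ joining $v_0$ to a further vertex $v_1$, and a small loop of length $\eta$ at $v_1$ (the loop only prevents $v_1$ from being terminal). As test function I place the half-soliton $\phi_\lambda|_{\rr^+}$ with its peak at $v_1$, decaying along $e$ toward $v_0$, then extended onto $H$ by the tail $s\mapsto\phi_\lambda(L+s)$ and set equal to the constant $\sqrt{\lambda}$ on the loop. For $L$ large and $\eta$ small the value $\phi_\lambda(L)$ at $v_0$ is exponentially small and the loop contribution is negligible, whence $\|u\|_{2,\G}^2\to\mu_{\rr^+}$, $\|u'\|_{2,\G}^2\to\|\phi_\lambda'\|_{2,\rr^+}^2$ and $\|u\|_{6,\K}^6\to\|\phi_\lambda\|_{6,\rr^+}^6=3\|\phi_\lambda'\|_{2,\rr^+}^2$, so that $Q(u)\to 3/\mu_{\rr^+}^2=C_{\rr^+}$. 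Combining with Theorem \ref{THM1}(iii) (which yields the strict bound $C_\K<C_{\rr^+}$), one obtains $C_{\K_\ep^1}\to C_{\rr^+}$, that is $\mu_{\K_\ep^1}\to\mu_{\rr^+}$, and a quantitative choice of $L,\eta$ then gives the first estimate.

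For $\G_\ep^2$ the task is dual and substantially more delicate: the goal is to drive the whole supremum defining $C_\K$ down to $1$. A first observation is that any competitor $u$ vanishing at the attachment vertex $v$ extends by zero onto $\rr$, so the standard Gagliardo--Nirenberg inequality gives $Q(u)\leq C_\rr=4/\pi^2<1$; it is therefore only through the mechanism $u(v)\neq 0$, i.e.\ via a ``soliton through $v$'' type of competitor, that $Q$ can exceed $1$. I would then design $\G_\ep^2$ so as to penalize precisely this mechanism: for instance, a half-line attached at $v$ to a compact core whose cycles at $v$ have lengths tuned to a parameter so that prescribing $u(v)=a\neq 0$ forces a large $L^2$ contribution both on the half-line tail and on the compact core, while the $L^6$ gain over the $u(v)=0$ regime becomes asymptotically negligible; the supremum of $Q$ can then be pushed to $1$ as the parameter of the family diverges.

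The main obstacle is exactly this upper bound. Unlike the construction of $\G_\ep^1$, here it is not enough to produce a single near-optimizer; one must control $Q(u)$ uniformly in $u\in H^1(\G_\ep^2)$, ruling out that some delicately chosen profile coupling half-line and loops through $v$ attains $Q$ significantly larger than $1$. This calls for a quantitative analysis of near-maximizers via their Euler--Lagrange equation (exponential decay on the half-line and a nonlinear ODE of the form $u''=\alpha u-3u^5$ on each edge of $\K$, with a Kirchhoff-type condition at $v$), together with a careful asymptotic study of the resulting system as the family parameter tends to its limit.
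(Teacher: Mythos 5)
Your construction of $\G_\ep^1$ is sound and is essentially the paper's Proposition \ref{PROP-METRIC TADPOLE}: a long cut edge carrying a half-soliton whose peak sits at the far end from the half-line, with the remaining compact components (your small loop) carrying the constant peak value and contributing negligibly to all three norms. Combined with the strict inequality $C_\K<C_{\rr^+}$ from Proposition \ref{PROP-C_6,K with respect to rr+}, this gives $C_{\K_\ep^1}\to C_{\rr^+}$, i.e.\ $\mu_{\K_\ep^1}\to\mu_{\rr^+}$, exactly as in the paper. That half of the theorem is fine.

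The second half, however, contains a genuine gap: you never prove the uniform upper bound $\sup_{u}Q(u)\to1$, and you say so yourself. Exhibiting a family and asserting that ``the supremum of $Q$ can then be pushed to $1$'' is precisely the content that must be established, and your proposed mechanism (``cycles at $v$ with lengths tuned to a parameter'') does not identify the geometric feature that makes the bound provable. Indeed, your own $\G_\ep^1$ shows that simply making edges of the compact core long drives $C_\K$ \emph{up} to $C_{\rr^+}$, not down to $1$; so ``tuning lengths'' without further specification risks landing in the wrong regime. The paper's Proposition \ref{PROP-METRIC INTRICATED K} isolates the correct condition: $\mathrm{diam}(\K_n)$ bounded while $|\K_n|\to\infty$ (e.g.\ many short parallel edges between two vertices). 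The proof then proceeds not through the Euler--Lagrange system you sketch, but through two quantitative facts: (a) a rearrangement argument in the spirit of \cite[Lemma 4.4]{AST-CMP} applied to actual optimizers $u_n$ (which exist by Lemma \ref{LEM-C K attained}) yields $\|u_n'\|_{2,\G_n}|\K_n|\le C$, so that $\max_{\K_n}u_n-\min_{\K_n}u_n\le\mathrm{diam}(\K_n)^{1/2}\|u_n'\|_{2,\G_n}$ is negligible compared with $\|u_n\|_{\infty,\G_n}$; and (b) the elementary but essential observation (Remark \ref{rem-cost+roba}) that any competitor which is \emph{constant on the compact core} satisfies $Q\le1$. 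Together these force $C_{\K_n}\le(\|u_n\|_{\infty}/m_n)^6\to1$. Without an argument of this type --- in particular without the a priori smallness of $\|u_n'\|_{2,\G_n}$ and the comparison with constant-on-core competitors --- your proposal does not rule out profiles with $Q$ bounded away from $1$, and the second estimate of the theorem remains unproven. I would also flag, as a minor point, that your auxiliary claim ``$u(v)=0$ implies $Q(u)\le C_\rr$'' is stated without justification; it requires an unfolding/rearrangement argument on the compact core and is not needed in the paper's proof.
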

	
	\begin{thm}
		\label{THM3}
		Estimates \eqref{EQ-bound mu tilde resto del mondo} are sharp in general; i.e., for every $\varepsilon>0$ there exist two non-compact metric graphs $\G_\varepsilon^1,\G_\varepsilon^2$ (with compact cores $\K^1_\ep,\K^2_\ep$, respectively), without terminal edges and cycle coverings and with at least two half-lines, such that
		\[
		\mu_{\K_\varepsilon^1}\leq\mu_{\rr^+}+\varepsilon\qquad\text{and}\qquad\mu_{\K_\varepsilon^2}\geq\mu_\rr-\varepsilon\,.
		\]
	\end{thm}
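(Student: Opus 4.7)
The plan is to exhibit, for each $\varepsilon>0$, two explicit case-(iv) graphs whose reduced critical masses tend to the endpoints of \eqref{EQ-bound mu tilde resto del mondo}. Both are taken from the common template $\G_{L,\eta}$ consisting of two half-lines meeting at a vertex $v_0$, a compact edge $e$ of length $L$ joining $v_0$ to a second vertex $v_1$, and a self-loop of length $\eta$ at $v_1$. Every vertex of $\G_{L,\eta}$ has degree $3$, the edge $e$ belongs to no cycle, and there are two half-lines, so $\G_{L,\eta}$ satisfies the hypotheses of case (iv) for every $L,\eta>0$. One then reads off $\mu_\K$ through the characterization \eqref{eq-red_mass} by controlling $C_\K$ via \eqref{EQ-def C_6.K}.

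For $\G_\varepsilon^1$ I would take $L$ large and $\eta$ small and use as test function the half-soliton $\phi_1$ placed on $e$ with tip at $v_1$ (parameterizing $e$ as $[0,L]$ with $v_1\leftrightarrow 0$), extended by the constant $\phi_1(0)$ on the loop and cut off to zero on the two half-lines through a bounded transition near $v_0$. The exponential decay of $\phi_1$ forces all truncation errors at $v_0$ to be $O(e^{-cL})$, and the constant extension on the loop costs $O(\eta)$ in $L^2$ and $L^6$ and zero in the $L^2$-norm of the derivative, so that
\[
Q(u)\;\xrightarrow[L\to\infty,\;\eta\to 0]{}\;\f{\|\phi_1\|_{6,\rr^+}^6}{\|\phi_1\|_{2,\rr^+}^4\,\|\phi_1'\|_{2,\rr^+}^2}=C_{\rr^+}.
\]
For $L$ large and $\eta$ small enough this gives $C_{\K_\varepsilon^1}\geq C_{\rr^+}-\delta$ for any preassigned $\delta>0$, whence $\mu_{\K_\varepsilon^1}\leq \mu_{\rr^+}+\varepsilon$.

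For $\G_\varepsilon^2$ I would instead take $L$ large and $\eta=L$. Since $\mu_{\K_\varepsilon^2}\leq\mu_\rr$ is automatic from Theorem \ref{THM1}, I only need the complementary upper bound $C_{\K_\varepsilon^2}\leq C_\rr+o(1)$. Using the trivial inequality $C_\K\leq C_\G$, the task reduces to showing $C_{\G_\varepsilon^2}\to C_\rr$ as $L\to\infty$. The expected argument splits the competing test functions into two regimes: spread functions, which essentially reduce to a soliton along a single long branch and contribute at most $C_\rr$ by \eqref{EQ-GN 6}; and concentrated pseudo-tip configurations at one of the two degree-$3$ vertices of $\G_{L,L}$, for which all three incident branches (both sides of the loop at $v_1$ counted separately) are now long, so that a direct mass balance gives at most $C_{\rr^+}/9<C_\rr$. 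Combining the two estimates gives $\mu_{\K_\varepsilon^2}\geq\mu_\rr-\varepsilon$ for $L$ large enough.

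The main technical obstacle is precisely the upper bound argument in $\G_\varepsilon^2$: ruling out every non-standard test function and reducing $C_\G$ to the two regimes above calls for a concentration-compactness analysis in the spirit of \cite{AST-CMP}. The $\G_\varepsilon^1$ construction, by contrast, is a direct computation with an explicit half-soliton and proceeds in parallel with the analogous construction used to prove Theorem \ref{THM2}.
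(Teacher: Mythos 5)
Your construction of $\G_\varepsilon^1$ is correct and is essentially the paper's own argument (Proposition \ref{PROP-METRIC SIGNPOST 1}): a half-soliton spread along a long cut edge with a negligible contribution from the cycle; the only cosmetic difference is that you shrink the loop ($\eta\to0$) while the paper keeps the cycle fixed and rescales the soliton by $\lambda=\ell^{-1/2}$.

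The construction of $\G_\varepsilon^2$, however, is wrong, and the error is in the choice of regime, not just in the missing concentration-compactness details. Taking $\eta=L\to\infty$ (long cut edge \emph{and} long loop) does not drive $C_\K$ down to $C_\rr$: it keeps it bounded away from $C_\rr$. Indeed, on $\G_{L,L}$ consider $u\equiv c$ on the loop, $u(x)=ce^{-x/L}$ on the cut edge parametrized by $[0,L]$ from $v_1$, and $u(y)=ce^{-1}e^{-y/L}$ on each half-line. A direct computation gives
\[
\|u\|_{6,\K}^6=c^6L\Bigl(1+\tfrac{1-e^{-6}}{6}\Bigr),\qquad
\|u\|_{2,\G}^2=c^2L\Bigl(\tfrac32+\tfrac{e^{-2}}{2}\Bigr),\qquad
\|u'\|_{2,\G}^2=\tfrac{c^2}{2L}\bigl(1+e^{-2}\bigr),
\]
so that $Q(u)\approx 0.836$ \emph{independently of} $L$, whereas $C_\rr=4/\pi^2\approx0.405$. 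Hence $C_{\K}(\G_{L,L})\geq 0.83$ for every $L$, i.e. $\mu_{\K}(\G_{L,L})\leq\sqrt{3/0.83}<1.9<\mu_\rr\approx2.72$, and the bound $\mu_{\K_\varepsilon^2}\geq\mu_\rr-\varepsilon$ fails for small $\varepsilon$. Your "spread vs.\ concentrated" dichotomy misses exactly this competitor: the loop is a closed cycle, so a function may sit at a constant plateau on it at zero kinetic cost — it is not a third "long branch" on which the profile must decay, and the $C_{\rr^+}/9$ mass-balance heuristic does not apply. (The same test function also shows $C_\G(\G_{L,L})\not\to C_\rr$, so the reduction $C_\K\leq C_\G$ cannot rescue the argument.) The correct regime is the opposite one, $\ell\to0$: a short cut edge makes the graph degenerate to one admitting a cycle covering, for which $C_\K=C_\rr$. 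Making this quantitative is the genuinely hard part of Theorem \ref{THM3}: the paper first establishes that optimizers have their maximum on the cut edge and are symmetric and monotone on the half-lines (Lemmas \ref{LEM- max optimizer on bridge}--\ref{LEMMA-optimizer on half-lines}), proves monotonicity of $C_{\K_\ell}$ in $\ell$ (Lemma \ref{LEMMA-monotonicity C_6 in G_l}), and then compares with the cycle-covered graph obtained by doubling the cut edge, controlling the discrepancy by $\|v_\ell\|_{\infty}^p\,\ell\to0$ (Proposition \ref{PROP-METRIC SIGNPOST 2}). None of this machinery is replaceable by the dichotomy you sketch.
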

	
	Some comments are in order. First, we underline that Theorem \ref{THM2} shows a completely new phenomenon with respect to the extended case. Indeed, in \cite{AST-CMP} for the topologies that fall into case (iii) the actual value of the critical mass $\mu_\G$ is completely insensitive to metric properties of $\G$, as $\mu_\G=\mu_{\rr^+}$, whereas this is not the case when a localization of the nonlinearity occurs. 
	
	Furthermore, we can actually characterize the structure of graphs approaching either $\mu_{\rr^+}$ or $\sqrt{3}$ in Theorem \ref{THM2}. The former case is realized when in the compact core there is a sufficiently long {\em cut edge}, that is a single edge connecting two disjoint subgraph (Proposition \ref{PROP-METRIC TADPOLE}). On the contrary, the latter situation occurs when the compact core is extremely intricate, in the sense that it has a very large total length with respect to its diameter (Proposition \ref{PROP-METRIC INTRICATED K}).
	
	It is also worth mentioning the role of the constant $\sqrt{3}$, which is new with respect to the extended case. For graphs with one half-line and no terminal edges, indeed, it is a kind of critical mass obtained by \eqref{eq-red_mass} if one replaces $C_\K$ with the maximum of $Q$ restricted to functions which are constant on the compact core and exponentially fast decaying on the half-line (see Proposition \ref{PROP- C_6,K geq 1 G with 1 half-line}). Albeit not being optimizers of the problem, these functions seem to play a significant role when the compact core thickens, as non-constants function would necessarily originate a large kinetic energy $\|u'\|_{2,\G}$.
	
	On the other hand, Theorem \ref{THM3} shows a similar asymptotic behaviour induced by the metric. Since this kind of graphs have at least one cut edge in their compact core, by extending or shrinking such an edge, one can recover the two limiting optimal constants (see Propositions \ref{PROP-METRIC SIGNPOST 1}-\ref{PROP-METRIC SIGNPOST 2}). In this case it is an open problem whether an analogous phenomenon could occur for an extended nonlinearity.

	 In conclusion, we recall that, as in the extended case, the existence of negative energy ground states, namely negative energy stationary solutions of \eqref{eq-timeNLS}, is something in sharp contrast with the usual behavior of the time-dependent NLSE on standard domains, where solutions with negative energy blow up in a finite time (see \cite{C}).

	The paper is organized as follows. In Section \ref{sec-prel} we recall some known facts on Gagliardo-Nirenberg inequalities and show some useful results on boundedness from below of $E(\cdot,\K)$ and pre-compactness of the minimizing sequences. In Section \ref{sec-thm1} we prove Theorem \ref{THM1}, stressing the connection between existence results, the value of $C_\K$ and the topology of the graph. Finally, Section \ref{sec-thm23} provides the proof of Theorem \ref{THM2} (Section \ref{sec-iii}) and Theorem \ref{THM3} (Section \ref{sec-iv}), with a particular focus on the relation between $C_\K$ and the metric structure of the graph.


    \section{Preliminary results: Gagliardo-Nirenberg inequalities and ground states}
    \label{sec-prel}
    
    In this section we analyse the connection between the Gagliardo-Nirenberg inequalities and the existence of ground states of \eqref{EQ-minimum problem def}.
    
    First, in addition to \eqref{eq-red_GN} and \eqref{EQ-GN 6}, we recall another well known Gagliardo-Nirenberg inequality on graphs:
    \begin{equation}
    \label{EQ-GN infty}
    \|u\|_{\infty,\G}\leq C_\infty\uLtwo^{1/2}\udot^{1/2}\,,\qquad\forall u\in H^1(\G)\,
    \end{equation}
    (see \cite{AST-JFA} for a proof), where $C_\infty$ denotes the smallest constant for which the inequality is satisfied. It is well known (see e.g. \cite{T-JMAA}) that, if $\G=\rr$, then $C_\infty=1$, while if $\G=\rr^+$, then $C_\infty=\sqrt{2}$, and that in both cases it is attained by $u(x)=e^{-|x|}$. On the other hand, for general non-compact graphs, although it is always true that $C_\infty\leq\sqrt{2}$,  the actual value of this constant depends both on topological and on metric properties of $\G$ . However, if $\G$ is a graph with exactly one half-line, we recover $C_\infty=\sqrt{2}$. Indeed, denoting by $\h$ the unique half-line of $\G$ and setting, for every $\varepsilon>0$,
        \[
        u_\varepsilon(x):=\begin{cases}
        \sqrt{\varepsilon}u(\varepsilon x) & \text{if }x\in\h\\
        \sqrt{\varepsilon} & \text{if }x\in\K
        \end{cases}
        \]
        with $u(x)=e^{-x}$, we get that $u_\varepsilon\in H^1(\G)$ and that
        \[
        \begin{split}
        \sqrt{2}\geq C_\infty\geq\frac{\|u_\varepsilon\|_{\infty,\G}}{\|u_\varepsilon\|_{2,\G}^{1/2}\|u_\varepsilon'\|_{2,\G}^{1/2}}=&\frac{\sqrt{\varepsilon}}{\Big(\|u_\varepsilon\|_{2,\h}^2+\varepsilon|\K|\Big)^{1/4}\|u_\varepsilon'\|_{2,\h}^{1/2}}\\
        =&\frac{\sqrt{\varepsilon}}{\Big(\frac{1}{2}+\varepsilon|\K|\Big)^{1/4}\left(\tfrac{\ep^2}{2}\right)^{1/4}
        }\longrightarrow\sqrt{2},\qquad\text{as}\quad\varepsilon\to0\,.
    	\end{split}
        \]
        
 Another important preliminary remark concerns a modified version of the Gagliardo-Nirenberg inequality \eqref{EQ-GN 6}, which has been first established in \cite[Lemma 4.4]{AST-CMP}. Let $\G$ satisfy \textbf{(A)} and assume also that it does not contain any terminal edge. Then, if $\mu\in(0,\mu_\rr]$, for every $u\in\HmuG$ there exists $\theta_u\in[0,\mu]$ (depending on $u$) such that
    
    \begin{equation}
    \label{EQ-modified GN}
    \uLsix^6\leq3\Big(\frac{\mu-\theta_u}{\mu_\rr}\Big)^2\udot^2+C\sqrt{\theta_u},
    \end{equation}
with $C>0$ depending only on $\G$. In addition, note that if $\{u_n\}\subset\HmuG$ is a sequence of functions such that $E_\K(u_n,\G)\leq-\alpha<0$, for some $\alpha>0$, then $\inf_{n}\theta_{u_n}>0$. Indeed, by \eqref{EQ-modified GN}
    	
    	\begin{equation}
    	\label{EQ-compactness proof 1}
    	\frac{1}{2}\|u_n'\|_{2,\G}^2\Big(1-\frac{(\mu-\theta_{u_n})^2}{\mu_\rr^2}\Big)-\frac{C}{6}\sqrt{\theta_{u_n}}\leq E(u_n,\K)\leq-\alpha<0,
    	\end{equation}
    	so that $\theta_{u_n}>0$ uniformly on $n$.  

    \medskip
    As mentioned in the Introduction, the key Gagliardo-Nirenberg inequality of the problem with localized nonlinearity is not the standard one given by \eqref{EQ-GN 6}, but instead \eqref{eq-red_GN} where the $L^6$ term affects only the compact core of the graph. In particular, it is evident by \eqref{eq-red_mass} that the value of the best constant $C_{\K}$ is the crucial parameter in order to determine whether solutions of \eqref{EQ-minimum problem def} exist or not. Indeed, plugging \eqref{eq-red_GN} into \eqref{EQ-energy def},
    \begin{equation}
    	\label{EQ-bound energy with GN}
    	E(u,\K)\geq\frac{1}{2}\udot^2\left(1-\frac{C_{\K}}{3}\mu^2\right)\,\qquad\forall u\in\HmuG
    \end{equation}
    showing that $C_\K$ plays a role in establishing the lower boundedness of the energy. More precisely, we can state the following
    
    \begin{lem}
    	\label{LEM-infimum and critical mass}
    	Let $\G$ satisfy \textbf{(A)} and $\mu_\K$ be the reduced critical mass defined by \eqref{eq-red_mass}-\eqref{EQ-def C_6.K}. The following classification holds:
    	\begin{itemize}
    		\item[(i)] if $\mu\leq\mu_\K$, then $\mathcal{E}_\G(\mu,\K)=0$;\\[-.2cm]
    		\item[(ii)] if $\mu\in(\mu_\K,\mu_\rr]$, then $\mathcal{E}_\G(\mu,\K)<0$; \\[-.2cm]
    		\item[(iii)] if $\mu>\mu_\rr$, then $\EEK=-\infty$.
    	\end{itemize}
    \end{lem}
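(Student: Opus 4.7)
The plan is to treat the three items separately. For (i) and (ii) I would combine the reduced Gagliardo--Nirenberg inequality \eqref{EQ-bound energy with GN} with elementary rescalings, while (iii) calls for an $L^2$-invariant concentration argument based on the soliton.

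For (i), the inequality \eqref{EQ-bound energy with GN} immediately gives $E(u,\K)\geq 0$ for every $u\in\HmuG$ when $\mu\leq\mu_\K$, since in that regime $C_\K\mu^2/3\leq 1$ by \eqref{eq-red_mass}. For the matching upper bound, I exploit the fact that, by assumption \textbf{(A)}, $\G$ contains at least one half-line $\h$: picking any $f\in H^1(\rr^+)$ with $f(0)=0$ and $\|f\|_{2,\rr^+}^2=\mu$, the rescaled functions $u_\lambda(x):=\sqrt{\lambda}\,f(\lambda x)$ on $\h$ (extended by zero to $\G\setminus\h$) lie in $\HmuG$, vanish on $\K$, and satisfy $E(u_\lambda,\K)=\tfrac{\lambda^2}{2}\|f'\|_{2,\rr^+}^2\to 0$ as $\lambda\to 0^+$, yielding $\mathcal{E}_\G(\mu,\K)=0$.

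For (ii), since $\mu>\mu_\K$ we have $3/\mu^2<C_\K=\sup_{H^1(\G)}Q$, so the definition of supremum produces $v\in H^1(\G)\setminus\{0\}$ with $Q(v)>3/\mu^2$. Rescaling by $\alpha:=(\mu/\|v\|_{2,\G}^2)^{1/2}$, the function $u:=\alpha v$ has mass $\mu$, and using $\alpha^6\|v\|_{2,\G}^4=\alpha^2\mu^2$ a direct computation gives
\[
E(u,\K)=\frac{\alpha^2}{2}\|v'\|_{2,\G}^2\left(1-\frac{\mu^2 Q(v)}{3}\right)<0,
\]
so $\mathcal{E}_\G(\mu,\K)<0$.

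For (iii), fix $\mu>\mu_\rr$ and set $\alpha:=\sqrt{\mu/\mu_\rr}>1$. The soliton $\phi_1$ of \eqref{EQ-def soliton lambda} has mass $\mu_\rr$ and satisfies $E(\phi_1,\rr)=0$, whence $\alpha\phi_1$ has mass $\mu$ and $E(\alpha\phi_1,\rr)=\tfrac{\alpha^2-\alpha^6}{2}\|\phi_1'\|_{2,\rr}^2<0$. A smooth cutoff on a large symmetric interval followed by a small renormalization produces $\phi_0\in C_c^\infty(\rr)$ with $\|\phi_0\|_{2,\rr}^2=\mu$ and $E_0:=E(\phi_0,\rr)<0$. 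The $L^2$-invariant scaling $\phi_\sigma(x):=\sqrt{\sigma}\,\phi_0(\sigma x)$ preserves the mass and concentrates the support by a factor $1/\sigma$, yielding $E(\phi_\sigma,\rr)=\sigma^2 E_0\to-\infty$ as $\sigma\to\infty$. Since $\K\neq\emptyset$, any edge of $\K$ can host $\phi_\sigma$ for $\sigma$ large enough; extending by zero on the rest of $\G$ gives $u_\sigma\in\HmuG$ supported in $\K$ with $E(u_\sigma,\K)=\sigma^2 E_0\to-\infty$, and hence $\EEK=-\infty$. The main technical obstacle I expect is this truncation step: the cutoff scale must be tuned so that the mass correction is negligible and the energy remains strictly negative after the $O(1)$ renormalization. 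Once this is in place, the other two items reduce to homogeneous rescalings combined with \eqref{EQ-bound energy with GN}.
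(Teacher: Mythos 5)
Your proposal is correct and follows essentially the same route as the paper: the lower bound in (i) from \eqref{EQ-bound energy with GN} together with a vanishing-gradient sequence, the supremum characterization of $C_\K$ for (ii), and a compactly supported negative-energy profile concentrated onto an edge of $\K$ via the mass-preserving scaling for (iii). The only cosmetic difference is that you build the compactly supported competitor in (iii) explicitly by truncating $\alpha\phi_1$, whereas the paper simply invokes \eqref{EQ-inf rr}; both are routine.
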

	
	\begin{proof}
		First, note that, for every $\mu>0$, if $\{u_n\}\subset\HmuG$ is a sequence such that $\|u_n'\|_{2,\G}\to0$, then
		\[
		\EEK\leq E(u_n,\K)\leq\frac{1}{2}\|u_n'\|_{2,\G}^2\longrightarrow0\qquad\text{as}\quad n\to\infty\,
		\]
		so that $\EEK\leq0$. Furthermore, if $\mu\leq\mu_\K$, then \eqref{EQ-bound energy with GN} entails that $E(u,\K)\geq0$ for all $u\in\HmuG$, thus proving (i).
		
		On the other hand, assume $\mu>\mu_\K$; for instance, $\mu=(1+\delta)\mu_\K$, for some $\delta>0$. Now, by \eqref{EQ-def C_6.K}, there exists $u\in\HmuG$ such that
		\[
		Q(u)>\frac{C_\K}{(1+\delta)^2},
		\]
		whence
		\[
		E(u,\K)<\frac{1}{2}\udot^2\Big(1-\frac{C_\K\mu^2}{3(1+\delta)^2}\Big)=0
		\]
        by \eqref{eq-red_mass} and the assumption on $\mu$, which yields (ii).
		
		Finally, let $\mu>\mu_\rr$ and $v\in H_\mu^1(\rr)$ such that $\text{supp}\{v\}=[0,1]$ and $E(v,\rr)<0$ (the existence of $v$ being guaranteed by \eqref{EQ-inf rr}). For every $\lambda>0$ define then
		\[
		v_\lambda(x):=\sqrt{\lambda}v(\lambda x)\,,
		\]
so that, $v_\lambda\in H_\mu^1(\rr)$ and $\text{supp}\{v_\lambda\}=[0,1/\lambda]$. Clearly, when $\lambda$ is large enough, $v_\lambda$ can be regarded as a function on $\G$ supported on a single edge of the compact core $\K$. As a consequence,
		\[
		\EEK\leq E(v_\lambda,\K)=\lambda^2 E(v,\rr)\longrightarrow-\infty\qquad\text{as}\quad\lambda\to\infty,
		\] 
		which concludes the proof.
	\end{proof}

	As boundedness from below is not enough in order to prove ground states to exist, the following lemma provides some sufficient conditions for existence/nonexistence.
	
	\begin{lem}
		\label{LEM-ground states}
		Let $\G$ satisfy \textbf{(A)} and $\mu_\K$ be the reduced critical mass defined by \eqref{eq-red_mass}--\eqref{EQ-def C_6.K}. Then: 
		\begin{itemize}
			\item[(i)] whenever $\mu<\mu_\K$, $\EEK$ is not attained;\\[-.2cm]
			\item[(ii)] whenever $\mu\in(\mu_\K,\mu_\rr]$ and $\EEK>-\infty$, $\EEK$ is attained.
		\end{itemize}
	\end{lem}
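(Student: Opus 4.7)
The plan naturally splits into the two assertions. For part (i), I would argue by contradiction. By Lemma~\ref{LEM-infimum and critical mass}(i), $\EEK=0$ whenever $\mu<\mu_\K$. If some $u\in\HmuG$ achieved this infimum, inserting the Gagliardo--Nirenberg inequality \eqref{eq-red_GN} into the energy would give
\[
0 = E(u,\K) \ge \tf{1}{2}\|u'\|_{2,\G}^{2}\Bigl(1 - \tf{C_{\K}\mu^{2}}{3}\Bigr),
\]
and since $\mu<\mu_\K=\sqrt{3/C_\K}$ makes the coefficient strictly positive, I would be forced to conclude $\|u'\|_{2,\G}=0$. Hence $u$ would be constant on $\G$; because assumption \textbf{(A)} guarantees the presence of at least one half-line, the only constant in $L^2(\G)$ is zero, contradicting $\|u\|_{2,\G}^2=\mu>0$.

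For part (ii), the plan is the direct method of the calculus of variations applied to a minimizing sequence $\{u_n\}\subset\HmuG$ with $E(u_n,\K)\to\EEK$. Lemma~\ref{LEM-infimum and critical mass}(ii) gives $\EEK<0$, so eventually $E(u_n,\K)\le-\alpha<0$. The first step is a uniform bound on $\|u_n'\|_{2,\G}$, which I would obtain via the refined Gagliardo--Nirenberg estimate \eqref{EQ-modified GN} together with \eqref{EQ-compactness proof 1}: as recalled just before the lemma, the parameter $\theta_{u_n}$ is uniformly bounded away from zero, and since $\mu\le\mu_\rr$ the factor $1-(\mu-\theta_{u_n})^2/\mu_\rr^2$ is bounded below by a positive constant as well. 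This yields $H^1(\G)$-boundedness, so up to a subsequence $u_n\rightharpoonup u$ in $H^1(\G)$.

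By the compactness of the Sobolev embedding on the compact core $\K$, one obtains $\|u_n\|_{6,\K}^6\to\|u\|_{6,\K}^6$, while weak lower semicontinuity gives $\mu':=\|u\|_{2,\G}^2\le\mu$ and $E(u,\K)\le\EEK$. The limit $u$ cannot vanish identically, otherwise $\|u_n\|_{6,\K}\to0$ would force $\liminf E(u_n,\K)\ge0$, contradicting $\EEK<0$; for the same reason $\|u\|_{6,\K}^6>0$, since otherwise $E(u,\K)\ge 0$. The conclusion would then follow by excluding $\mu'<\mu$ through a scaling argument: set $v:=\sqrt{\mu/\mu'}\,u\in\HmuG$ and $c:=\sqrt{\mu/\mu'}>1$, so that, using $\tf{1}{2}\|u'\|_{2,\G}^2\le\EEK+\tf{1}{6}\|u\|_{6,\K}^6$,
\[
E(v,\K)=\tf{c^2}{2}\|u'\|_{2,\G}^2-\tf{c^6}{6}\|u\|_{6,\K}^6\le c^2\,\EEK+\tf{c^2-c^6}{6}\|u\|_{6,\K}^6 < \EEK,
\]
where the last strict inequality combines $c>1$, $\EEK<0$ and $\|u\|_{6,\K}^6>0$. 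This would contradict the definition of $\EEK$, so necessarily $\mu'=\mu$, and the inequality $E(u,\K)\le\EEK$ forces equality, providing the minimizer.

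The step I expect to be the main obstacle is the $H^1$-boundedness of the minimizing sequence, since the $L^2$-critical scaling prevents any direct coercivity from \eqref{EQ-bound energy with GN} (indeed $C_\K\mu^2>3$ when $\mu>\mu_\K$), which is why the refined split \eqref{EQ-modified GN} into a subcritical-like term and a remainder in $\sqrt{\theta_{u_n}}$ is essential. The remaining arguments, based on compactness on $\K$ and a one-parameter scaling, are of standard flavour once the bound is secured.
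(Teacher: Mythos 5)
Your proposal is correct and follows essentially the same route as the paper: part (i) via the Gagliardo--Nirenberg bound \eqref{EQ-bound energy with GN} forcing $\|u'\|_{2,\G}=0$ and hence $u\equiv 0$ on a noncompact graph, and part (ii) via the modified inequality \eqref{EQ-modified GN} with $\theta_{u_n}$ bounded away from zero to get $H^1$-boundedness, compactness on $\K$, weak lower semicontinuity, and the scaling argument $E(\sigma u,\K)<\sigma^2E(u,\K)<E(u,\K)$ to exclude mass loss. No gaps.
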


	\begin{proof}
		If $\mu<\mu_\K$, then \eqref{eq-red_mass}--\eqref{EQ-bound energy with GN} imply that $E(u,\K)>0$ for every $u\in\HmuG$ and, combining with Lemma \ref{LEM-infimum and critical mass}, (i) follows immediately.
		
		On the other hand, suppose $\mu\in(\mu_\K,\mu_\rr]$ and $\EEK>-\infty$. Let, also, $\{u_n\}\subset\HmuG$ be a minimizing sequence. By Lemma \ref{LEM-infimum and critical mass}, there exists $\alpha>0$ such that $E(u_n,\K)\leq-\alpha$ as $n$ is large enough, so that by \eqref{EQ-compactness proof 1}, \eqref{EQ-modified GN} holds with $\theta_{u_n}\geq C>0$. Consequently, $\mu-\theta_{u_n}<\mu_\rr$, so that \eqref{EQ-modified GN} entails that $\{u_n\}_{n\in\mathbb{N}}$ is bounded in $H^1(\G)$ and that $u_n\rightharpoonup u$ in $H^1(\G)$ and $u_n\to u$ in $L^6(\K)$, for some $u\in H^1(\G)$. Moreover, by weak lower semicontinuity
		\begin{equation}
		\label{EQ-compatness proof 2}
		E(u,\K)\leq\liminf_{n}E(u_n,\K)=\EEK\,.
		\end{equation}
		
		Therefore, we are left to prove that $\|u\|_{2,\G}^2=:m=\mu$. By weak lower semicontinuity again, $m\leq\mu$. On the other hand, it is immediate to see that $m\neq0$, since otherwise $u\equiv0$ and, by \eqref{EQ-compatness proof 2}, $\EEK\geq0$, which is a contradiction. Moreover, if $m<\mu$, then there exists $\sigma>1$ such that $\|\sigma u\|_{2,\G}^2=\mu$ and 
		\[
		E(\sigma u,\K)=\frac{\sigma^2}{2}\udot^2-\frac{\sigma^6}{6}\uLsixcompact^6<\sigma^2 E(u,\K)<E(u,\K),
		\]
		which contradicts \eqref{EQ-compatness proof 2} and hence concludes the proof.
    \end{proof}

   \begin{rem}
   	Lemma \ref{LEM-ground states} does not say anything about $\mu=\mu_\K$. Indeeds, this endpoint problem strongly depends on the topology of the graph and must be addressed case by case.
   \end{rem}
   

	\section{Proof of Theorem \ref{THM1}: how the topology affects $C_{\K}$}
	\label{sec-thm1}
	
	Throughout this section we provide the proof of Theorem \ref{THM1}. It is based on the study of the actual value of $C_\K$ in the four distinct topological classes (i)-(iv) defined by Theorem \ref{THM1}.
	
	As a first step, we derive upper and lower bounds for $C_\K$ on a generic non-compact graph. In the following, we denote by $C_{\rr^+}$ and $C_\rr$ the best constant of \eqref{EQ-GN 6} when $\G=\rr^+$ and $\G=\rr$, respectively. 
	
	\begin{prop}
		\label{PROP-C_6,K between rr and rr+ }
		For every $\G$ satisfying \textbf{(A)}, the constant $C_{\K}$ fulfills
		\begin{equation}
			\label{EQ-C_6,K between rr and rr+ }
			C_\rr\leq C_{\K}\leq C_{\rr^+}\,.
		\end{equation}
	\end{prop}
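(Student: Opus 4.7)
The plan is to prove the two inequalities in \eqref{EQ-C_6,K between rr and rr+ } by independent techniques: the upper bound $C_\K \leq C_{\rr^+}$ via monotone rearrangement onto the half-line, and the lower bound $C_\rr \leq C_\K$ by placing rescaled optimizers of the $L^2$-critical Gagliardo-Nirenberg inequality on $\rr$ inside a single bounded edge of the compact core.

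For the upper bound, given $u \in H^1(\G)$ I would pass to the monotone rearrangement $u^\ast$ of $u$ on $\rr^+$, which is well-defined since $\G$ is noncompact. Such a rearrangement preserves all $L^p$-norms and does not increase the $L^2$-norm of the derivative (a P\'olya--Szeg\H{o}-type principle for metric graphs, see e.g. \cite{AST-JFA}). Combining $\|u\|_{6,\K} \leq \|u\|_{6,\G} = \|u^\ast\|_{6,\rr^+}$ with inequality \eqref{eq-red_GN} on $\rr^+$ applied to $u^\ast$ yields
\[
\uLsixcompact^6 \leq C_{\rr^+} \|u^\ast\|_{2,\rr^+}^4 \|(u^\ast)'\|_{2,\rr^+}^2 \leq C_{\rr^+} \uLtwo^4 \udot^2,
\]
and taking the supremum in \eqref{EQ-def C_6.K} gives $C_\K \leq C_{\rr^+}$.

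For the lower bound I exploit the scale invariance of the ratio $Q_\rr(w) := \|w\|_{6,\rr}^6 / (\|w\|_{2,\rr}^4\|w'\|_{2,\rr}^2)$. Fix a bounded edge $e \subset \K$ of length $\ell > 0$, whose existence is guaranteed by $\K \neq \emptyset$. By density of $C_c^\infty(\rr)$ in $H^1(\rr)$ and the definition of $C_\rr$, for every $\varepsilon > 0$ there is $w \in C_c^\infty(\rr)$ with $Q_\rr(w) \geq C_\rr - \varepsilon$; rescaling by $w_\lambda(x) := \sqrt{\lambda}\,w(\lambda x)$ with $\lambda$ large enough makes $\mathrm{supp}(w_\lambda)$ fit strictly inside an interval of length $\ell$ while leaving $Q_\rr$ unchanged. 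Inserting $w_\lambda$ in the edge $e$ and extending by zero on the rest of $\G$ defines a continuous function $u \in H^1(\G)$ supported in $\K$ and satisfying $Q(u) = Q_\rr(w_\lambda) = Q_\rr(w) \geq C_\rr - \varepsilon$. Letting $\varepsilon \to 0$ in \eqref{EQ-def C_6.K} yields $C_\K \geq C_\rr$.

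The main technical issue is the compatibility of the test function with the graph structure in the lower bound: the soliton $\phi_\lambda$ itself is nowhere vanishing, so it cannot be transplanted directly into a single edge while preserving continuity at the endpoints. Using smooth compactly supported approximations of $\phi_\lambda$ and the scale invariance of $Q_\rr$, which preserves the ratio under arbitrary localisation, is exactly what bypasses this obstruction and frees the argument from any metric assumption on $\K$ beyond its non-emptiness.
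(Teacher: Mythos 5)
Your proposal is correct and follows essentially the same route as the paper: the upper bound via the decreasing rearrangement onto $\rr^+$ together with the Pólya--Szeg\H{o} principle and the sharp Gagliardo--Nirenberg inequality on the half-line, and the lower bound by transplanting rescaled, compactly supported near-optimizers of $C_\rr$ into a single edge of $\K$. The only cosmetic differences are that you obtain the compactly supported near-optimizers by density of $C_c^\infty(\rr)$ and continuity of the quotient, whereas the paper truncates an explicit maximizing sequence, and that the inequality you invoke on $\rr^+$ is \eqref{EQ-GN 6} rather than \eqref{eq-red_GN}.
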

	
	\begin{proof}
        Let $u\in H^1(\G)$ and assume with no loss of generality $u\geq0$. Then, denote by $u^*$ its decreasing rearrangement on $\rr^+$, namely
        \begin{equation}
         \label{eq-decrear}
         u^*(x):=\inf\{t\geq0:\rho(t)\leq x\}\quad x\in[0,|\G|),\qquad\text{with}\qquad \rho(t):=\sum_{e\in\mathrm{E}}|\{x_e\in I_e:u_e(x_e)>t\}|\quad t\geq0.
        \end{equation}
        By well known properties of rearrangements (see \cite[Section 3]{AST-CVPDE} for details),
		\begin{equation}
			\label{EQ-properties rearrangements}
			\|(u^*)'\|_{2,\rr^+}\leq\udot\,,\qquad\|u^*\|_{p,\rr^+}=\uLp\qquad\forall\,p\geq1\,,
		\end{equation}
		so that
		\[
		Q(u)\leq\frac{\uLsix^6}{\uLtwo^4\udot^2}\leq\frac{\|u^*\|_{6,\rr^+}^6}{\|u^*\|_{2,\rr^+}^4\|(u^*)'\|_{2,\rr^+}^2}\leq C_{\rr^+},
		\]
		and recalling \eqref{EQ-def C_6.K}
		\[
		C_\K\leq C_{\rr^+}\,.
		\]
		On the other hand, there exists a sequence $\{\wt{v}_n\}\subset H^1(\rr)$, $\wt{v}_n\geq0$, such that
		\begin{equation}
		 \label{eq-GNr}
		\frac{\|\wt{v}_n\|_{6,\rr}^6}{\|\wt{v}_n\|_{2,\rr}^4\|\wt{v}_n'\|_{2,\rr}^2}\longrightarrow C_\rr\qquad\text{as}\quad n\to\infty\,
		\end{equation}
		and such that $\|\wt{v}_n\|_{2,\rr}^4\|\wt{v}_n'\|_{2,\rr}^2\geq C>0$ (see, e.g. \cite{DELL-JLMS}). Now, simply truncating and lowering $\wt{v}_n$, one can define another sequence $v_n\in H^1(\rr)$, which is compactly supported, such that $\|v_n\|_{2,\rr}\leq\|\wt{v}_n\|_{2,\rr}$, $\|v_n'\|_{2,\rr}\leq\|\wt{v}_n'\|_{2,\rr}$ and $\|v_n-\wt{v}_n\|\to0$. As a consequence,
		\[
		 \frac{\|\wt{v}_n\|_{6,\rr}^6}{\|\wt{v}_n\|_{2,\rr}^4\|\wt{v}_n'\|_{2,\rr}^2}\leq\frac{\|v_n\|_{6,\rr}^6}{\|v_n\|_{2,\rr}^4\|v_n'\|_{2,\rr}^2}+\frac{\|v_n-\wt{v}_n\|_{6,\rr}^6}{\|\wt{v}_n\|_{2,\rr}^4\|\wt{v}_n'\|_{2,\rr}^2}\leq C_\rr+o(1),
		\]
		and thus \eqref{eq-GNr} still holds with $\wt{v}_n$ replaced by $v_n$. Then, for a fixed edge $e$ in $\K$, we can define $u_n\in H^1(\G)$ as
		\[
		u_n(x):=\begin{cases}
		\sqrt{\lambda_n}v_n(\lambda_n x) & \text{if }x\in I_e\\
		0 & \text{elsewhere on }\G\,,
		\end{cases}
		\]
		where $\lambda_n>0$ is chosen so that $\text{supp}\{u_n\}\subset I_e$, and since 
		\[
		C_\K\geq Q(u_n)=\frac{\|v_n\|_{6,\rr}^6}{\|v_n\|_{2,\rr}^4\|v_n'\|_{2,\rr}^2}\,,
		\]
		passing to the limit one obtains $C_\K\geq C_\rr$.
	\end{proof}

	In general, inequalities in \eqref{EQ-C_6,K between rr and rr+ } are not sharp. However, in order to prove this, it is necessary to preliminarily detect under which assumptions the optimal constant is attained.

	\begin{lem}
		\label{LEM-C K attained}
		Let $\G$ satisfy \textbf{(A)} and assume also that it does not contain any terminal edge. If, in addition, $C_\K\neq C_\rr$, then there exists $0\not\equiv u\in H^1(\G)$ such that $Q(u)=C_\K$.
	\end{lem}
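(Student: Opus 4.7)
The plan is to extract a non-trivial limit from a suitably normalized maximizing sequence and then show by lower semicontinuity that this limit attains $C_\K$. Since $C_\K\neq C_\rr$, Proposition \ref{PROP-C_6,K between rr and rr+ } forces the strict inequality $C_\K>C_\rr$, so that $\mu_\K<\mu_\rr$ by \eqref{eq-red_mass}. I thus fix once and for all a mass $\mu\in(\mu_\K,\mu_\rr]$, take a sequence $\{u_n\}\subset H^1(\G)$ of non-negative functions with $Q(u_n)\to C_\K$, and exploit the invariance $Q(cu)=Q(u)$ to rescale $u_n$ so that $\|u_n\|_{2,\G}^2=\mu$.

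Since $\G$ has no terminal edge and $\mu\leq\mu_\rr$, the modified Gagliardo--Nirenberg inequality \eqref{EQ-modified GN} applies to $u_n$; combined with $\|u_n\|_{6,\K}\leq\|u_n\|_{6,\G}$ and $\theta_n\in[0,\mu]$, it yields
\[
Q(u_n)\leq\frac{3(\mu-\theta_n)^2}{\mu^2\mu_\rr^2}+\frac{C\sqrt{\theta_n}}{\mu^2\|u_n'\|_{2,\G}^2}\leq\frac{3}{\mu_\rr^2}+\frac{C\sqrt{\mu}}{\mu^2\|u_n'\|_{2,\G}^2}.
\]
Were $\|u_n'\|_{2,\G}$ unbounded, passing to the limit along a diverging subsequence would give $C_\K\leq 3/\mu_\rr^2=C_\rr$, contradicting $C_\K>C_\rr$. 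Thus $\{u_n\}$ is bounded in $H^1(\G)$ and, up to subsequences, $u_n\rightharpoonup u$ in $H^1(\G)$, $\|u_n'\|_{2,\G}^2\to a^2$ for some $a\geq 0$, and $u_n\to u$ strongly in $L^p(\K)$ for every $p\in[1,\infty)$ by the compactness of the embedding $H^1(\K)\hookrightarrow L^p(\K)$.

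The main difficulty is to rule out $u|_\K\equiv 0$, the scenario in which all the mass of $u_n$ escapes along the half-lines. Arguing by contradiction, if $u|_\K\equiv 0$ then strong $L^6(\K)$-convergence gives $\|u_n\|_{6,\K}\to 0$; since $Q(u_n)\to C_\K>0$, this forces $a=0$, i.e.\ $\|u_n'\|_{2,\G}\to 0$. Then \eqref{EQ-GN infty} implies $\|u_n\|_{\infty,\G}^4\leq C_\infty^4\mu\|u_n'\|_{2,\G}^2\to 0$, so
\[
\|u_n\|_{6,\K}^6\leq\|u_n\|_{\infty,\G}^4\|u_n\|_{2,\K}^2\leq C_\infty^4\mu\|u_n'\|_{2,\G}^2\|u_n\|_{2,\K}^2,
\]
whence $Q(u_n)\leq C_\infty^4\mu^{-1}\|u_n\|_{2,\K}^2\to 0$ by strong $L^2(\K)$-convergence of $u_n$ to $0$, contradicting $Q(u_n)\to C_\K>0$.

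Finally, strong $L^6(\K)$-convergence gives $\|u\|_{6,\K}^6=\lim_n\|u_n\|_{6,\K}^6=C_\K\mu^2a^2$, hence $a>0$, while weak lower semicontinuity provides $\|u'\|_{2,\G}^2\leq a^2$ and $\|u\|_{2,\G}^2\leq\mu$. Inserting these bounds into the definition of $Q$ yields $Q(u)\geq C_\K$, and the opposite inequality being immediate from \eqref{EQ-def C_6.K} forces $Q(u)=C_\K$, concluding the proof.
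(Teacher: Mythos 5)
Your proof is correct and follows essentially the same route as the paper's: normalize the maximizing sequence in $L^2$, use the modified Gagliardo--Nirenberg inequality \eqref{EQ-modified GN} together with $C_\K>C_\rr$ to obtain $H^1$-boundedness, rule out vanishing on $\K$ via the $L^\infty$ inequality \eqref{EQ-GN infty} and the compact embedding, and conclude by weak lower semicontinuity. The only (cosmetic) differences are that you fix a single mass $\mu\in(\mu_\K,\mu_\rr]$ instead of the $n$-dependent normalization $\mu_n=\sqrt{3/(C_\K-\varepsilon_n)}$, and you derive the contradiction $Q(u_n)\to0$ directly rather than first showing $\|u_n'\|_{2,\G}$ is bounded away from zero.
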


	\begin{proof}
		Let $\{u_n\}\subset H^1(\G)$ be a maximizing sequence for $C_{\K}$, so that there exists $\varepsilon_n\downarrow0$ such that
		\[
		\|u_n\|_{6,\K}^6=(C_{\K}-\varepsilon_n)\|u_n\|_{2,\G}^4\|u_n'\|_{2,\G}^2\,.
		\]
		Moreover, as the functional $Q$ is homogeneous, we can set without loss of generality
		\[
		\|u_n\|_{2,\G}^2=:\mu_n=\sqrt{\frac{3}{C_{\K}-\varepsilon_n}}\,,
		\]
		so that
		\begin{equation}
			\label{EQ-proof C K attained quotient = 3}
			\frac{\|u_n\|_{6,\K}^6}{\|u_n'\|_{2,\G}^2}=3\,.
		\end{equation}
		Note also that, as $\varepsilon_n\to0$ and $C_{\K}>C_\rr$ by assumption, (for large $n$) there results
		\[
		\mu_n<\mu_\rr+\delta\,,
		\]
		for some fixed $\delta>0$. Therefore, combining \eqref{EQ-proof C K attained quotient = 3} and \eqref{EQ-modified GN},
		\begin{equation*}
		\|u_n'\|_{2,\G}^2=\frac{1}{3}\|u_n\|_{6,\K}^6\leq\frac{1}{3}\|u_n\|_{6,\G}^6\leq\frac{\mu_n^2}{\mu_\rr^2}\|u_n'\|_{2,\G}^2+C\sqrt{\mu_\rr}
		\end{equation*}
		and rearranging terms
		\begin{equation*}
		\Big(1-\frac{\mu_n^2}{\mu_\rr^2}\Big)\|u_n'\|_{2,\G}^2\leq C\sqrt{\mu_\rr}\,.
		\end{equation*}
		Hence, $\{u_n\}$ is bounded in $H^1(\G)$ and there exists $u\in H^1(\G)$ such that
		\begin{equation}
		\label{EQ-u_n to u strong L6}
		 u_n\rightharpoonup u\quad\text{in}\quad H^1(\G)\qquad\text{and}\qquad\|u_n\|_{p,\K}\to\|u\|_{p,\K},\qquad\forall p\in[1,\infty].
		\end{equation}
        In addition, by weak lower semicontinuity
		\begin{equation}
		\label{EQ-semicontinuity u_n}
		\uLtwo\leq\liminf_{n}\|u_n\|_{2,\G}\qquad\text{and}\qquad\udot\leq\liminf_{n}\|u_n'\|_{2,\G}.
		\end{equation}
		On the other hand, by \eqref{EQ-GN infty},
		\begin{equation*}
		\|u_n'\|_{2,\G}^2=\frac{1}{3}\|u_n\|_{6,\K}^6\leq\frac{1}{3}\|u_n\|_{\infty,\G}^6|\K|\leq\frac{1}{3} C_\infty^6\|u_n\|_{2,\G}^3\|u_n'\|_{2,\G}^3|\K|
		\end{equation*}
		and thus 
		\begin{equation}
		\label{EQ-u_n' norm bounded away 0}
		\|u_n'\|_{2,\G}\geq\frac{3}{|\K|C_\infty^6\mu_n^3}\geq\frac{3}{|\K| C_\infty^6(\mu_\rr)^{3/2}}>0\,.
		\end{equation}
		Assume then $u\equiv0$ on $\G$. As $\|u_n\|_{\infty,\K}\to0$, we have
		\begin{equation*}
		\|u_n'\|_{2,\G}^2\leq\frac{1}{3}\|u_n\|_{6,\K}^6\leq\frac{1}{3}\|u_n\|_{\infty,\K}^6|\K|\longrightarrow0\qquad\text{as}\quad n\to\infty,
		\end{equation*}
		which contradicts \eqref{EQ-u_n' norm bounded away 0}. As a consequence, $u\not\equiv0$ and, combining with \eqref{EQ-u_n to u strong L6} and \eqref{EQ-semicontinuity u_n} 
		\begin{equation*}
		C_{\K}\geq Q(u)\geq\limsup_{n}Q(u_n)=C_{\K},
		\end{equation*}
		which concludes the proof.
	\end{proof}
	
	Now, we can improve the estimates in Proposition \ref{PROP-C_6,K between rr and rr+ }, owing to the topological features of the graphs. Precisely, the following proposition distinguishes the cases when the graph does possess a terminal edge, i.e. (i) of Theorem \ref{THM1}, and when it does not, i.e. (ii)-(iv) of Theorem \ref{THM1}. Furthermore, for cases (i) and (ii) it provides the exact value of $C_\K$. 
	
	\begin{prop}
		\label{PROP-C_6,K with respect to rr+}
		For every $\G$ satisfying \textbf{(A)}: 
		\begin{itemize}
			\item[(i)] if $\G$ has (at least) a terminal edge, then $C_{\K}=C_{\rr^+}$;\\[-.2cm]
			\item[(ii)] if $\G$ has no terminal edge, then $C_{\K}<C_{\rr^+}$; if $\G$ admits also a cycle-covering, then $C_{\K}=C_\rr$.
		\end{itemize}
	\end{prop}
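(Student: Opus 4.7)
For part (i), the upper bound $C_\K\le C_{\rr^+}$ is already in Proposition \ref{PROP-C_6,K between rr and rr+}; to match it I would construct a maximizing sequence supported entirely on the terminal edge. Pick compactly supported near-optimizers $\{w_n\}\subset H^1(\rr^+)$ of $C_{\rr^+}$, obtained by truncating and lowering a half-soliton in the same spirit as the truncation trick inside the proof of Proposition \ref{PROP-C_6,K between rr and rr+}. Rescale them as $\sqrt{\lambda_n}\,w_n(\lambda_n x)$ with $\lambda_n$ large enough that the support shrinks below the length of the terminal edge, and orient the edge so that its degree-one vertex plays the role of the endpoint of $\rr^+$. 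Planting the rescaled function there and extending by zero gives an admissible $u_n\in H^1(\G)$, and the scale invariance of $Q$ yields $Q(u_n)=Q_{\rr^+}(w_n)\to C_{\rr^+}$, hence $C_\K\ge C_{\rr^+}$.

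For the first assertion in (ii), I would argue by contradiction. If $C_\K=C_\rr$ we are done, since $\mu_\rr>\mu_{\rr^+}$ forces $C_\rr<C_{\rr^+}$; otherwise $C_\K>C_\rr$ and Lemma \ref{LEM-C K attained} provides a nonzero optimizer $u\in H^1(\G)$ with $Q(u)=C_\K$. Passing to its decreasing rearrangement $u^*$ on $\rr^+$ and using \eqref{EQ-properties rearrangements} together with $\|u\|_{6,\K}\le\|u\|_{6,\G}=\|u^*\|_{6,\rr^+}$, one sandwiches
\[
 C_\K=Q(u)\le\frac{\|u^*\|_{6,\rr^+}^6}{\|u^*\|_{2,\rr^+}^4\|(u^*)'\|_{2,\rr^+}^2}\le C_{\rr^+}.
\]
Assuming $C_\K=C_{\rr^+}$ forces equality at each step, so $u$ is supported on $\K$ and $u^*$ is an extremizer of $C_{\rr^+}$. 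The latter is (a rescaling of) a half-soliton, strictly positive on all of $[0,\infty)$, while the former makes $u^*$ vanish on $(|\K|,\infty)$: contradiction.

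For the second assertion in (ii), I would apply the even rearrangement $u^\sharp$ onto $\rr$ that is available precisely for graphs admitting a cycle-covering (see \cite{AST-CVPDE}), and which satisfies $\|u^\sharp\|_{p,\rr}=\|u\|_{p,\G}$ for every $p\in[1,\infty]$ and $\|(u^\sharp)'\|_{2,\rr}\le\|u'\|_{2,\G}$. Combining $\|u\|_{6,\K}\le\|u\|_{6,\G}=\|u^\sharp\|_{6,\rr}$ with the Gagliardo-Nirenberg inequality \eqref{EQ-GN 6} on $\rr$ gives $Q(u)\le C_\rr$ for every $u\in H^1(\G)$, matching the lower bound in Proposition \ref{PROP-C_6,K between rr and rr+}. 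The only delicate step in the whole argument is the rigidity used in the first assertion of (ii): it crucially relies on the description of the extremizers of the half-line Gagliardo-Nirenberg inequality as rescaled half-solitons, strictly positive on all of $\rr^+$, which is what turns the purely geometric fact $|\K|<\infty$ into the desired contradiction. Everything else amounts to transporting standard scaling and rearrangement arguments onto $\G$.
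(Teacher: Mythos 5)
Your proposal is correct and follows essentially the same route as the paper: planting rescaled compactly supported near-optimizers of $C_{\rr^+}$ on the terminal edge for (i), invoking Lemma \ref{LEM-C K attained} plus the decreasing-rearrangement sandwich to rule out $C_\K=C_{\rr^+}$ when there is no terminal edge, and using the symmetric rearrangement onto $\rr$ for the cycle-covering case. The only cosmetic differences are that you spell out the rigidity step via the explicit characterization of half-line extremizers as strictly positive half-solitons (the paper simply calls the attainment by a compactly supported function ``impossible'') and that you should note, as the paper does, that one may assume $u\geq 0$ before rearranging.
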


	\begin{proof}
        Let us split the proof in two parts.
        
		\textit{Part (i): graphs with a terminal edge.} Arguing as in the proof of Proposition \ref{PROP-C_6,K between rr and rr+ }, one can see that there exists $\{v_n\}\subset H^1(\rr^+)$, $v_n(x)\equiv0$ for large $x$, such that
		\[
		\frac{\|v_n\|_{6,\rr^+}^6}{\|v_n\|_{2,\rr^+}^4\|v_n'\|_{2,\rr^+}^2}\longrightarrow C_{\rr^+},\qquad\text{as}\quad n\to\infty\,,
		\]
		and that from $\{v_n\}$ one can construct a sequence $\{u_n\}\subset H^1(\G)$, supported only on a terminal edge of $\G$, such that
		\[
		Q(u_n)=\frac{\|v_n\|_{6,\rr^+}^6}{\|v_n\|_{2,\rr^+}^4\|v_n'\|_{2,\rr^+}^2}.
		\]
		Hence, as $C_\K\geq Q(u_n)$, passing to the limit in $n$ yields $C_\K\geq C_{\rr^+}$, which in view of \eqref{EQ-C_6,K between rr and rr+ } completes the proof.
		
		\textit{Part (ii): graphs without terminal edges.} Owing to Proposition \ref{PROP-C_6,K between rr and rr+ }, it is sufficient to show that $C_{\K}\neq C_{\rr^+}$. Suppose by contradiction that $C_{\K}=C_{\rr^+}$. By Lemma \ref{LEM-C K attained} $C_{\K}$ is attained, i.e. there exists $u\in H^1(\G)$ such that $Q(u)=C_{\rr^+}$. It follows by \eqref{EQ-properties rearrangements} that
		\begin{equation}
			\label{EQ-C rr^+ not attained calculations}
			C_{\rr^+}=Q(u)\leq\frac{\uLsix^6}{\uLtwo^4\udot^2}\leq\frac{\|u^*\|_{6,\rr^+}^6}{\|u^*\|_{2,\rr^+}^4\|(u^*)'\|_{2,\rr^+}^2}\leq C_{\rr^+},
		\end{equation}
		with $u^*$ being the decreasing rearrangement of $u$ (which is again not restrictive to assume nonnegative). Thus
		\[
			\uLsixcompact=\uLsix\,,
		\]
		so that $u\equiv0$ outside of $\K$. As a consequence, $(u_{\mid_\K})^*\in H^1(\rr^+)$ is null for large $x$ and at the same time attains $C_{\rr^+}$, which is impossible. Then $C_\K\neq C_{\rr^+}$.  
		
		Suppose, now, that $\G$ admits a cycle-covering. Given $u\in H^1(\G)$, $u\geq0$, denote by $\widehat{u}\in H^1(\rr)$ its symmetric rearrangement, i.e.
		\[
		 \widehat{u}(x):=\inf\{t\geq0:\rho(t)\leq 2|x|\}\qquad x\in(-|\G|/2,|\G|/2),
		\]
		with $\rho$ defined by \eqref{eq-decrear}. The presence of a cycle-covering entails that $u$ attains at least twice almost every value in its image, and thus, from \cite[Section 3]{AST-CVPDE},
		\[
			\|(\widehat{u})'\|_{2,\rr}\leq\udot\,,\qquad \|\widehat{u}\|_{p,\rr}=\uLp\qquad\forall\,p\geq1\,.
		\]
		Hence
		\[
		Q(u)\leq\frac{\|\widehat{u}\|_{6,\rr}^6}{\|\widehat{u}\|_{2,\rr}^4\|(\widehat{u})'\|_{2,\rr}^2}\leq C_\rr\,,
		\]
		and, passing to the supremum on $H^1(\G)$, in view of \eqref{EQ-C_6,K between rr and rr+ } we have $C_\K=C_\rr$.
	\end{proof}

	The next proposition pushes forward the analysis of $C_\K$ in the case of graphs with exactly one half-line and no terminal edge.
	
	\begin{prop}
		\label{PROP- C_6,K geq 1 G with 1 half-line}
		For every $\G$ satisfying \textbf{(A)} with exactly one half-line and no terminal edge
		\[
			C_{\K}>1\,.
		\]
	\end{prop}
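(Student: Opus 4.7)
My plan is to exhibit an explicit $u \in H^1(\G)$ with $Q(u) > 1$, which by \eqref{EQ-def C_6.K} forces $C_\K > 1$. Let $\h$ denote the unique half-line, parametrized so that $x = 0$ corresponds to the junction vertex $v_0 = \h \cap \K$. For $\varepsilon > 0$, consider the one-parameter family
\[
u_\varepsilon(x) := \begin{cases} \sqrt{\varepsilon}\,e^{-\varepsilon x}, & x \in \h, \\ \sqrt{\varepsilon}, & x \in \K, \end{cases}
\]
namely the functions constant on $\K$ and exponentially decaying on $\h$ singled out in the introduction. By continuity at $v_0$, $u_\varepsilon \in H^1(\G)$.

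A direct computation gives $\|u_\varepsilon\|_{2,\G}^2 = \tf{1}{2} + \varepsilon|\K|$, $\|u_\varepsilon'\|_{2,\G}^2 = \tf{\varepsilon^2}{2}$ and $\|u_\varepsilon\|_{6,\K}^6 = \varepsilon^3|\K|$, so that
\[
Q(u_\varepsilon) = \frac{2\varepsilon|\K|}{\bigl(\tf{1}{2} + \varepsilon|\K|\bigr)^2},
\]
a rational function of $\varepsilon$ whose maximum on $(0,\infty)$ is attained at $\varepsilon_0 := 1/(2|\K|)$ with value exactly $1$. Thus the family alone gives only $C_\K \geq 1$, and the core of the argument will be to push strictly beyond this threshold by a first-variation perturbation.

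I would pick $\phi \in C_c^\infty$ supported in the interior of some edge of $\K$ (this is possible because $\K \neq \emptyset$), with $\phi \geq 0$ and $\phi \not\equiv 0$, and study $\eta \mapsto Q(u_{\varepsilon_0} + \eta \phi)$ near $\eta = 0$. Writing $Q = N/D$ with $N(u) := \|u\|_{6,\K}^6$ and $D(u) := \|u\|_{2,\G}^4 \|u'\|_{2,\G}^2$, and exploiting $N(u_{\varepsilon_0}) = D(u_{\varepsilon_0})$ together with $u_{\varepsilon_0}' \equiv 0$ on $\mathrm{supp}(\phi)$ and $\phi \equiv 0$ on $\h$, the derivative collapses to
\[
\frac{d}{d\eta}\bigg|_{\eta=0} Q(u_{\varepsilon_0} + \eta\phi) = \frac{N'[\phi] - D'[\phi]}{D} = \frac{4\varepsilon_0^{5/2} \int_\K \phi\,dx}{D} > 0,
\]
after substituting $\|u_{\varepsilon_0}\|_{2,\G}^2 = 1$ and $\|u_{\varepsilon_0}'\|_{2,\G}^2 = 1/(8|\K|^2)$. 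Hence $Q(u_{\varepsilon_0} + \eta\phi) > 1$ for all sufficiently small $\eta > 0$.

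The essential subtlety is the strict sign $N'[\phi] > D'[\phi]$ at the critical parameter $\varepsilon_0$, which is where the hypothesis of a \emph{single} half-line enters: with $k$ half-lines the same ansatz would give $\max_\varepsilon Q(u_\varepsilon) = 1/k$, too small to reach $1$, and a more refined construction would be required. The no-terminal-edge assumption is not directly invoked in this argument; it is consistent with the general strategy in that, when terminal edges are present, the stronger identity $C_\K = C_{\rr^+} > 1$ already follows from Proposition \ref{PROP-C_6,K with respect to rr+}(i).
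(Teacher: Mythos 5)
Your proof is correct, and its first half coincides with the paper's Step (i): the same trial family (constant on $\K$, exponential on $\h$) gives $\sup_{\varepsilon}Q(u_\varepsilon)=1$, attained at $\varepsilon_0=1/(2|\K|)$. Where you genuinely diverge is in upgrading $C_\K\geq 1$ to $C_\K>1$. The paper argues by contradiction: if $C_\K=1$ then $u_{\varepsilon_0}$ is a maximizer of $Q$, hence by the Euler--Lagrange analysis of \cite[Proposition 3.3]{AST-CVPDE} it must belong to $\mathrm{dom}(-\Delta_\G)$ and satisfy the Kirchhoff condition at the vertex joining $\K$ to $\h$, which fails there since the one-sided derivative on the half-line equals $-\varepsilon_0^{3/2}\neq0$ while all derivatives on the compact-core side vanish. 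You instead compute the first variation of $Q$ at $u_{\varepsilon_0}$ in a single explicit direction $\phi\geq0$ supported in the interior of $\K$ and find it strictly positive; I have checked the computation ($N'[\phi]=6\varepsilon_0^{5/2}\int_\K\phi$, $D'[\phi]=2\varepsilon_0^{5/2}\int_\K\phi$, so the difference is $4\varepsilon_0^{5/2}\int_\K\phi>0$), and since $Q$ restricted to the line $\eta\mapsto u_{\varepsilon_0}+\eta\phi$ is a smooth function with nonvanishing denominator, $Q(u_{\varepsilon_0}+\eta\phi)>1$ for small $\eta>0$ follows. This is a genuinely different localization of the non-stationarity (interior of the compact core rather than the junction vertex), it is constructive rather than by contradiction, and it avoids invoking the regularity and Euler--Lagrange machinery for optimizers of $Q$ -- a mild but real simplification. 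Two minor remarks: in your aside on $k$ half-lines the same ansatz actually yields $\max_\varepsilon Q(u_\varepsilon)=1/k^2$ rather than $1/k$ (immaterial to the argument), and your observation that the no-terminal-edge hypothesis is not used here matches the paper, where it likewise plays no role in this particular proposition.
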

	
	\begin{proof}
	    We divide the proof in two steps.
	    
	    \emph{Step(i): $C_\K\geq1$}. Denote by $\h$ the half-line of $\G$ and $L$ the measure of $\K$. For every $c,\alpha>0$, we define the function $u_{c,\alpha}\in H^1(\G)$ such that
		\begin{equation}
		\label{EQ-def u c alpha}
			u_{c,\alpha}(x):=\begin{cases}
			c & \text{if }x\in\K\\
			ce^{-\alpha x} & \text{if }x\in\h\,,
			\end{cases}
		\end{equation}
		(see for instance Figure \ref{FIGURE-constant and exponential}).
		
		\begin{figure}[t]
			\centering
			\includegraphics[width=0.6\textwidth]{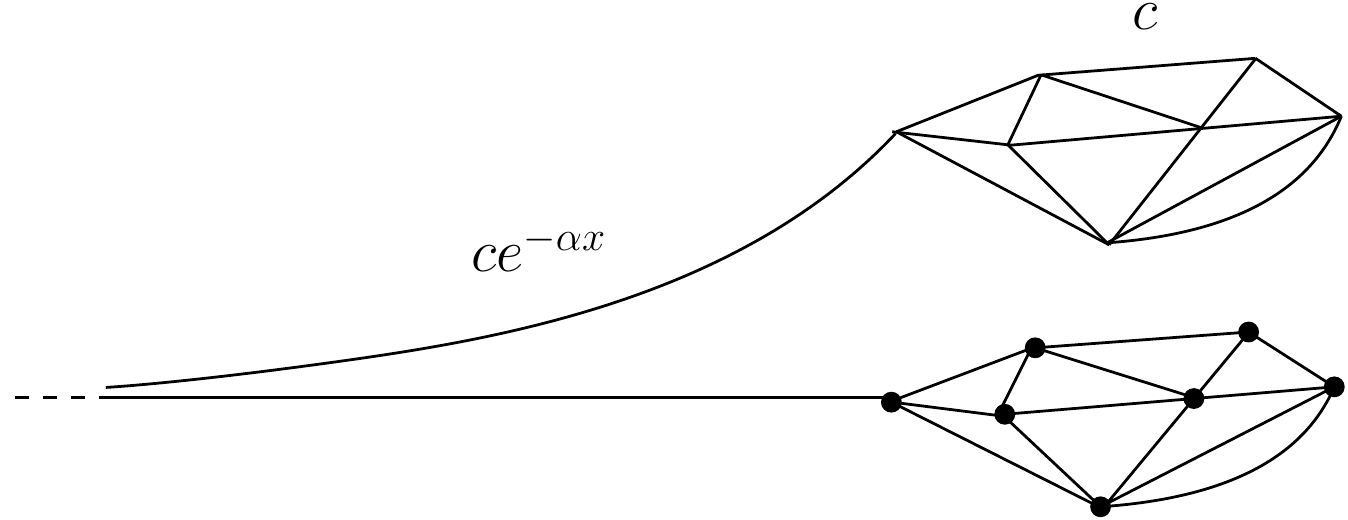}
			\caption{example of the function $u_{c,\alpha}$ as in the proof of Proposition \ref{PROP- C_6,K geq 1 G with 1 half-line}.}
			\label{FIGURE-constant and exponential}
		\end{figure}
		
		Direct computations yield
		\[
			\begin{split}
			\|u_{c,\alpha}\|_{2,\G}^2=&\int_\K c^2\,dx+\int_\h c^2e^{-2\alpha x}\,dx=c^2 L+\frac{c^2}{2\alpha}=c^2\Big(L+\frac{1}{2\alpha}\Big)\\
			\|u_{c,\alpha}'\|_{2,\G}^2=&\int_\h c^2\alpha^2e^{-2\alpha x}\,dx=\frac{c^2\alpha}{2}\\
			\|u_{c,\alpha}\|_{6,\K}^6=&\int_\K c^6\,dx=c^6 L\,,
			\end{split}
		\]
        so that
		\[
			Q(u_{c,\alpha})=\frac{c^6 L}{c^4\Big(L+\frac{1}{2\alpha}\Big)^2\frac{c^2\alpha}{2}}=\frac{8\alpha L}{(2\alpha L+1)^2}=:F(\alpha)\,.
		\]
		Now,
		\begin{equation*}
			F'(\alpha)=\frac{8L (1-2L\alpha)}{(2\alpha L+1)^3}
		\end{equation*}
		and then $F$ has a maximum point at $\alpha=\overline{\alpha}:=\frac{1}{2L}$. Hence, for every $c,\alpha>0$
		\begin{equation}
		\label{EQ-quotient u c alpha leq 1}
			Q(u_{c,\alpha})\leq F(\overline{\alpha})=1,
		\end{equation}
		and thus, by definition,
		\[
		C_\K\geq1\,.
		\]
		
		\emph{Step(ii): $C_\K\neq1$}. Assume, by contradiction, that $C_\K=1$. Since $1=F(\overline{\alpha})=Q(u_{c,\overline{\alpha}})$, $u_{c,\overline{\alpha}}$ as in \eqref{EQ-def u c alpha} with $\alpha=\overline{\alpha}:=\frac{1}{2L}$ is an optimizer of the functional $Q$ on $H^1(\G)\backslash\{0\}$. As a consequence, for every $\varphi\in H^1(\G)$,
		\[
		 \f{d}{d\ep}Q(u_{c,\overline{\alpha}}+\ep\varphi)_{\big|_{\ep=0}}=0,
		\]
		namely, after standard computations
		\[
		 A(u_{c,\overline{\alpha}})\int_\G u_{c,\overline{\alpha}}'\,\varphi'\dx+B(u_{c,\overline{\alpha}})\int_\G u_{c,\overline{\alpha}}\,\varphi\dx=C(u_{c,\overline{\alpha}})\int_\K|u_{c,\overline{\alpha}}|^4\,u_{c,\overline{\alpha}}\,\varphi\dx,
		\]
		with
		\begin{gather*}
		 A(u_{c,\overline{\alpha}}):=\f{2\|u_{c,\overline{\alpha}}\|_{6,\K}^6}{\big(\|u_{c,\overline{\alpha}}\|_{2,\G}\|u_{c,\overline{\alpha}}'\|_{2,\G}\big)^4},\qquad
		 B(u_{c,\overline{\alpha}}):=\f{4\|u_{c,\overline{\alpha}}\|_{6,\K}^6}{\|u_{c,\overline{\alpha}}\|_{2,\G}^6\|u_{c,\overline{\alpha}}'\|_{2,\G}^2}\\[.2cm]
		 C(u_{c,\overline{\alpha}}):=\f{6}{\|u_{c,\overline{\alpha}}\|_{2,G}^4\|u_{c,\overline{\alpha}}'\|_{2,\G}^2}.
		\end{gather*}
		Now, arguing as in the proof of \cite[Proposition 3.3]{AST-CVPDE}, we get that
		\begin{equation}
		 \label{eq-ELGN}
		 -A(u_{c,\overline{\alpha}})\,(u_{c,\overline{\alpha}})_e''+B(u_{c,\overline{\alpha}})\,(u_{c,\overline{\alpha}})_e=\chi_{\K}\,C(u_{c,\overline{\alpha}})\,|(u_{c,\overline{\alpha}})_e|^4\,(u_{c,\overline{\alpha}})_e,\qquad\forall e\in\mathrm{E},
		\end{equation}
		and that $u_{c,\overline{\alpha}}\in\mathrm{dom}(-\Delta_\G)$. However, this is impossible, since the Kirchhoff conditions
		\[
		 \sum_{e\succ v}\f{d(u_{c,\overline{\alpha}})_e}{dx_e}(\vv)=0,\qquad\forall\vv\in\K,
		\]
		are not fulfilled by $u_{c,\overline{\alpha}}$ at the vertex joining the compact core to the half-line. As a consequence, $u_{c,\overline{\alpha}}$ cannot be an optimizer of $Q$, so that $C_\K\neq1$.
	\end{proof}

	\begin{rem}
		\label{rem-cost+roba}
		Note that, whenever $\G$ is a graph with exactly one half-line and no terminal edges,
		\[
		Q(u)\leq 1
		\]
		for every $u\in H^1(\G)$ which is constant on the compact core, independently of its specific form on the half-line $\h$. Indeed, letting $c:=u_{|_\K}$ and $m:=\|u\|_{2,\h}^2$, Proposition 4.3 in \cite{T-JMAA} ensures that 
		\[
		\inf\,\{\,\|v'\|_{2,\h}\,:\,v\in H^1(\h),\,v(0)=c\text{ and }\|v\|_{2,\h}^2=m\,\}
		\]
		is attained by the exponential function $\varphi(x)=ce^{-c^2\frac{x}{m}}$. Thus, 
		\[
		Q(u)=\frac{c^6|\K|}{(c^2|\K|+m)^2\|u'\|_{2,\h}^2}\leq\frac{c^6|\K|}{(c^2|\K|+m)^2\|\varphi'\|_{2,\h}^2}\leq 1
		\]
by the proof of the previous proposition.
	\end{rem}

	We are now ready to prove Theorem \ref{THM1}.
	
	\begin{proof}[Proof of Theorem \ref{THM1}]
	The first part of Theorem \ref{THM1}, i.e. the validity of \eqref{EQ- THM1 inf}, is proved by Lemma \ref{LEM-infimum and critical mass} setting $\mu_\K$ as in \eqref{eq-red_mass}-\eqref{EQ-def C_6.K}. On the other hand, the values and the estimates on $\mu_\K$ in cases (i)-(iv) are a straightforward application of the results of Propositions \ref{PROP-C_6,K between rr and rr+ }, \ref{PROP-C_6,K with respect to rr+} and \ref{PROP- C_6,K geq 1 G with 1 half-line}.
	
	It is then left to discuss the sole existence of the ground states. Since for $\mu>\mu_\rr$ nonexistence is immediate by \eqref{EQ- THM1 inf} and for $\mu<\mu_\K$ it is straightforward by Lemma \ref{LEM-ground states}, we only consider masses $\mu_\K\leq\mu\leq\mu_\rr$. Let us split the proof according to the four classes.
	
	\emph{Case (i): graphs with at least a terminal edge.} Recall that in this case $\mu_\K=\mu_{\rr^+}$. Assume first that $\mu>\mu_{\rr^+}$. It is well known that there exists $v\in H^1(\rr^+)$, with $v(x)\equiv0$ for large $x$, such that $E(v,\rr^+)<0$. Then, arguing as in the proof of Lemma \ref{LEM-infimum and critical mass}, one can construct a sequence $\{u_n\}\subset\HmuG$ supported on a terminal edge of $\G$ for which $E(u_n,\K)\to-\infty$, as $n\to\infty$. Hence, $\EEK=-\infty$ and no ground state may exist.
	
    Now, assume $\mu=\mu_{\rr^+}$. Suppose also, by contradiction, that there exists a ground state $u\in H_{\mu_{\rr^+}}^1(\G)$. Since, by Lemma \ref{LEM-infimum and critical mass}, $E(u,\K)=\mathcal{E}_\G(\mu_{\rr^+},\K)=0$, we get
	\[
	\frac{\uLsixcompact^6}{\udot^2}=3
	\]
	and, dividing both terms by $\mu_{\rr^+}^2$,
	\[
	Q(u)=\frac{3}{\mu_{\rr^+}^2}=C_{\rr^+}\,.
	\]
	Hence, since $C_\K=C_{\rr^+}$ by Proposition \ref{PROP-C_6,K with respect to rr+}, $u$ is an optimizer for $C_\K$ too. However, arguing as in \eqref{EQ-C rr^+ not attained calculations} and the subsequent paragraph, this entails that the decreasing rearrangement on $\rr^+$ of $u_{\mid_\K}$ is a compactly-supported function attaining $C_{\rr^+}$, which is well known to be impossible. Hence, ground states do not exist also when $\mu=\mu_{\rr^+}$.
	
	\emph{Case (ii): graphs admitting a cycle-covering.} Recall that in this case $\mu_\K=\mu_{\rr}$, which is then the unique value of the mass to discuss. This case can be dealt with repeating the previous argument with $\mu_\rr$ in place of $\mu_{\rr^+}$, $C_\rr$ in place of $C_\rr^+$ and the symmetric rearrangement $\widehat{u}$ in place of the decreasing rearrangement $u^*$.
	
	\emph{Case (iii): graphs with exactly one half-line.} Recall that in this case $\mu_\K\in(\mu_{\rr^+},\sqrt{3})$. By \eqref{EQ-modified GN} we get
	\[
	E(u,\K)\geq\frac{1}{2}\udot^2\Big(1-\frac{\mu^2}{\mu_\rr^2}\Big)-\frac{C}{6}\sqrt{\mu}\,,
	\] 
	and, since $\sqrt{3}<\mu_\rr$, $\EEK>-\infty$. Then, if $\mu>\mu_\K$, by Lemma \ref{LEM-ground states} ground states exist. 
	
	On the other hand, assume $\mu=\mu_\K$ and let $u\in H_{\mu_\K}^1(\G)$ be an optimizer of $C_\K$ provided by Lemma \ref{LEM-C K attained} (note that here $C_\K>1>C_\rr$). Then
	\[
	\frac{\uLsixcompact^6}{\udot^2}=C_\K\mu_\K^2=3,
	\]
	so that, recalling \eqref{EQ- THM1 inf},
	\[
	E(u,\K)=\frac{1}{2}\udot^2\Big(1-\frac{\uLsixcompact^6}{3\udot^2}\Big)=0=\mathcal{E}_\G(\mu_\K,\K),
	\]
	which entails that $u$ is a ground state.
	
	\emph{Case (iv): graphs without terminal edges and cycle-coverings, and with at least two half-lines.} Recall that in this case $\mu_\K\in(\mu_{\rr^+},\mu_\rr]$. Here, if one assumes $\mu_\K\neq\mu_\rr$ (that is, $C_\K\neq C_\rr$), then it is possible to recover the argument in the proof of case (iii).
	\end{proof} 
	

	\section{Proofs of Theorems \ref{THM2}-\ref{THM3}: how the metric affects $C_{\K}$}
	\label{sec-thm23}
	
	In this section, we present the proofs of Theorems \ref{THM2} (Section \ref{sec-iii}) and \ref{THM3} (Section \ref{sec-iv}). Precisely, we show that in cases (iii) and (iv) of Theorem \ref{THM1}, the estimates on the reduced critical mass $\mu_\K$ (\eqref{EQ-bound mu tilde 1 half-line} and \eqref{EQ-bound mu tilde resto del mondo}, respectively) cannot be improved in general. In particular, we explain how upper/lower bounds given by \eqref{EQ-bound mu tilde 1 half-line} and \eqref{EQ-bound mu tilde resto del mondo} can be asymptotically obtained suitably modifying the metric of the compact core of the graph.
	
	As in the previous section, the bulk of the analysis focuses on $C_\K$. In addition, we will tacitly assume, in the following, that any graph satisfies \textbf{(A)}.
	
	
	\subsection{Graphs with exactly one half-line and no terminal edges}
	\label{sec-iii}
	
	We first focus on graphs with exactly one half-line and no terminal edges, for which we already proved that $\mu_{\rr^+}<\mu_\K<\sqrt{3}$, i.e. $1<C_\K<C_{\rr^+}$, and that ground states do exist if and only if $\mu\in[\mu_\K,\mu_\rr]$. 
	
	The first preliminary result concerns the existence of a sequence of graphs whose optimal constants converge to $C_{\rr^+}$.
	
	\begin{figure}[t]
		\centering
		\includegraphics[width=0.6\textwidth]{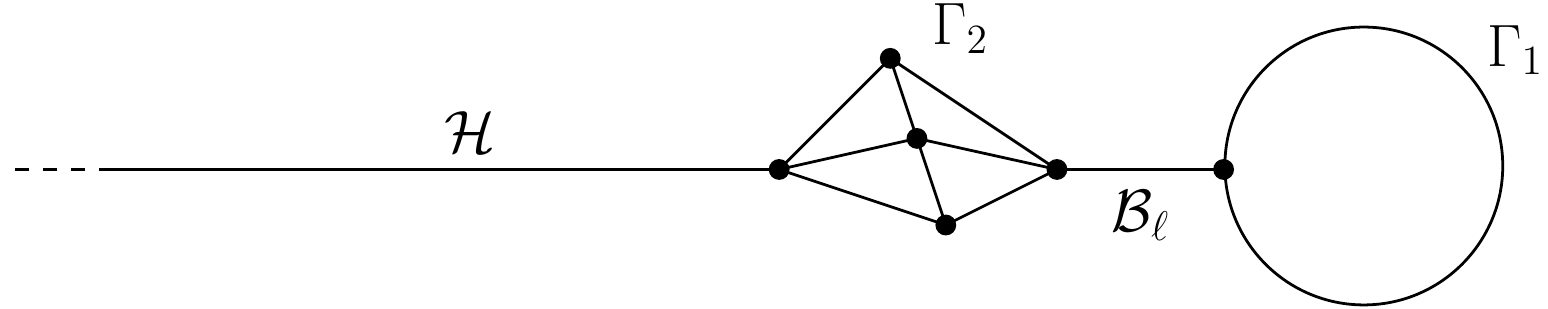}
		\caption{The graph $\G_\ell$ of Proposition \ref{PROP-METRIC TADPOLE}}
		\label{FIGURE-one halfline metric.}
	\end{figure}
	
	\begin{prop}
		\label{PROP-METRIC TADPOLE}
		 For every $\ell>0$, let $\G_\ell$ be a graph consisting of one half-line and a compact core $\K_\ell$ given by a cut edge $\mathcal{B}_\ell$, of length $\ell$, joining two disjoint subgraphs $\Gamma_1,\Gamma_2$, such that $\Gamma_1\cap\h=\emptyset$ (see, e.g., Figure \ref{FIGURE-one halfline metric.}). Let also $C_{\K_\ell}$ be the optimal constant \eqref{EQ-def C_6.K} when $\G=\G_\ell$. Then,
		 \begin{equation}
		  \label{eq-asint1}
		  C_{\K_\ell}\longrightarrow C_{\rr^+},\qquad\text{as}\quad\ell\to\infty.
		 \end{equation}
	\end{prop}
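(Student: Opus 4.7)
The plan is to combine the upper bound $C_{\K_\ell}\le C_{\rr^+}$ already established in Proposition \ref{PROP-C_6,K between rr and rr+} with a matching asymptotic lower bound, built by placing a near-optimal test function from $\rr^+$ along the long cut edge $\mathcal{B}_\ell$.

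More precisely, I will first fix $\delta>0$ and, arguing exactly as in the proof of Proposition \ref{PROP-C_6,K with respect to rr+}(i), select a nontrivial $v\in H^1(\rr^+)$ compactly supported in $[0,R]$ with
\[
Q_{\rr^+}(v):=\f{\|v\|_{6,\rr^+}^6}{\|v\|_{2,\rr^+}^4\,\|v'\|_{2,\rr^+}^2}\ge C_{\rr^+}-\delta.
\]
For every $\ell>R$, I will then use the $L^2$-invariant rescaling $v_\lambda(x):=\sqrt{\lambda}\,v(\lambda x)$ with $\lambda=\lambda_\ell:=R/\ell$, so that $v_\lambda$ is supported exactly on $[0,\ell]$. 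Denoting by $\vv_1,\vv_2$ the endpoints of $\mathcal{B}_\ell$ (with $\vv_1$ the one joining $\Gamma_1$) and parametrizing $\mathcal{B}_\ell$ as $[0,\ell]$ with $0$ at $\vv_1$, I will define
\[
u_\ell:= v_\lambda \text{ on } \mathcal{B}_\ell, \qquad u_\ell\equiv\sqrt{\lambda}\,v(0) \text{ on } \Gamma_1, \qquad u_\ell\equiv 0 \text{ on } \Gamma_2\cup\h.
\]
The identities $v_\lambda(0)=\sqrt{\lambda}\,v(0)$ and $v_\lambda(\ell)=\sqrt{\lambda}\,v(R)=0$ then ensure that $u_\ell$ is continuous at $\vv_1$ and $\vv_2$, hence $u_\ell\in H^1(\G_\ell)$.

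A short computation based on the scaling identities $\|v_\lambda\|_{2,\rr^+}^2=\|v\|_{2,\rr^+}^2$, $\|v_\lambda\|_{6,\rr^+}^6=\lambda^2\|v\|_{6,\rr^+}^6$ and $\|v_\lambda'\|_{2,\rr^+}^2=\lambda^2\|v'\|_{2,\rr^+}^2$ will yield
\[
Q(u_\ell)=\f{\|v\|_{6,\rr^+}^6+\lambda\,v(0)^6|\Gamma_1|}{\bigl(\|v\|_{2,\rr^+}^2+\lambda\,v(0)^2|\Gamma_1|\bigr)^2\,\|v'\|_{2,\rr^+}^2}.
\]
Since $\lambda=R/\ell\to 0$ as $\ell\to\infty$, the two $\Gamma_1$-contributions vanish in the limit, so that $\lim_{\ell\to\infty}Q(u_\ell)=Q_{\rr^+}(v)\ge C_{\rr^+}-\delta$. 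Coupled with $C_{\K_\ell}\ge Q(u_\ell)$, this gives $\liminf_{\ell\to\infty}C_{\K_\ell}\ge C_{\rr^+}-\delta$ for every $\delta>0$; letting $\delta\downarrow 0$ yields the required lower bound which, combined with $C_{\K_\ell}\le C_{\rr^+}$, concludes \eqref{eq-asint1}.

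The main obstacle — and the reason the construction requires a careful choice of scaling — is the unavoidable constant extension of $u_\ell$ to $\Gamma_1$ dictated by the $H^1$-continuity at $\vv_1$. Without rescaling, this extension would add spurious terms of order $v(0)^6|\Gamma_1|$ and $v(0)^2|\Gamma_1|$ to numerator and denominator respectively, strictly lowering the quotient below $Q_{\rr^+}(v)$. The role of the long cut edge is precisely to allow the scaling $\lambda=R/\ell\to 0$, which preserves the $\rr^+$-quotient of $v_\lambda$ while driving its peak value $\sqrt{\lambda}\,v(0)$ to zero, thereby making the $\Gamma_1$-contribution asymptotically invisible to the Gagliardo-Nirenberg quotient.
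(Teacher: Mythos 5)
Your proposal is correct and follows essentially the same strategy as the paper: the upper bound from Proposition \ref{PROP-C_6,K between rr and rr+ } is matched by a lower bound obtained from a test function that places a rescaled near-optimizer of the half-line Gagliardo--Nirenberg quotient along the long cut edge, extended by a constant on $\Gamma_1$, with the rescaling making the $\Gamma_1$-contributions vanish as $\ell\to\infty$. The only (harmless) difference is that the paper uses the exact half-soliton $\phi_\lambda$ with $\lambda=\ell^{-1/2}$, letting it spill onto $\mathcal{B}_\ell\cup\h$, whereas you use a compactly supported $\delta$-near-optimizer confined to $\mathcal{B}_\ell$ together with a diagonal argument in $\delta$.
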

	
	\begin{proof}
		First, identify, for the sake of simplicity, $\mathcal{B}_\ell$ with the interval $[0,\ell]$ and $\h$ with the interval $[\ell,\infty)$. With a little abuse of notation, let also $\{\phi_{\lambda}\}$ be the sequence of the half-solitons on $\rr^+$, i.e. the restrictions to $\rr^+$ of the functions given by \eqref{EQ-def soliton lambda}.
		
		Then, set $\lambda:=\ell^{-1/2}$, for every $\ell>0$, and define the sequence $\{u_\lambda\}\subset H^1(\G_\ell)$ such that
		\[
		u_\lambda(x):=\begin{cases}
		\phi_\lambda(x) & \text{if }x\in\mathcal{B}_\ell\cup\h\\
		\phi_\lambda(0) & \text{if }x\in\Gamma_1\\
		\phi_\lambda(\ell) & \text{if }x\in\Gamma_2\,.
		\end{cases}
		\]
		Recalling that $\phi_1(0)=1$, we have
		\[\begin{array}{l}
			\|u_\lambda\|_{2,\G_\ell}^2=\|\phi_1\|_{2,\rr^+}^2+\ell^{-1/2}\big(\,|\Gamma_1|+\phi_1^2(\ell^{1/2})|\Gamma_2|\,\big)\\[.2cm]
			\|u_\lambda'\|_{2,\G_\ell}^2=\ell^{-1}\|\phi_1'\|_{2,\rr^+}^2\\[.2cm]
			\|u_\lambda\|_{6,\K_\ell}^6=\ell^{-1}\|\phi_1\|_{L^6(0,\ell^{1/2})}^6+\ell^{-3/2}\big(\,|\Gamma_1|+\phi^6(\ell^{1/2})|\Gamma_2|\,\big),
		\end{array}
		\]
		so that, as $\ell\to\infty$,
		\[
		Q(u_\lambda)=\frac{\|\phi_1\|_{L^6(0,\ell^{1/2})}^6+\ell^{-1/2}\big(\,|\Gamma_1|+\phi^6(\ell^{1/2})|\Gamma_2|\big)}{\Big(\|\phi_1\|_{2,\rr^+}^2+\ell^{-1/2}|\Gamma_1|+\ell^{-1/2}\phi^2(\ell^{1/2})|\Gamma_2|\Big)^2\|\phi_1'\|_{2,\rr^+}^2}\longrightarrow\frac{\|\phi_1\|_{6,\rr^+}^6}{\|\phi_1\|_{2,\rr^+}^4\|\phi_1'\|_{2,\rr^+}^2}=C_{\rr^+}
		\]
		since $\phi_1$ is an optimizer of $C_{\rr^+}$. Hence, since $Q(u_\lambda)\leq C_{\K_\ell}<C_{\rr^+}$ for every $\ell>0$, \eqref{eq-asint1} follows.
	\end{proof}

	Now, we can focus on the existence of a sequence of graphs whose optimal constants converge, on the contrary, to $1$.
	
	 Recalling the proof of Proposition \ref{PROP- C_6,K geq 1 G with 1 half-line}, note that 1 is the value of $C_\K$ if one restrict the maximization in \eqref{EQ-def C_6.K} to functions that are constant on the compact core. As already pointed out in the previous section, such functions cannot be actual optimizers, but nevertheless the following proposition shows that when the compact core becomes too intricate, i.e. it has a large total length but a small diameter, then the optimizers cannot exhibit a significantly different behaviour. 

	\begin{figure}[t]
		\centering
		\includegraphics[width=0.4\textwidth]{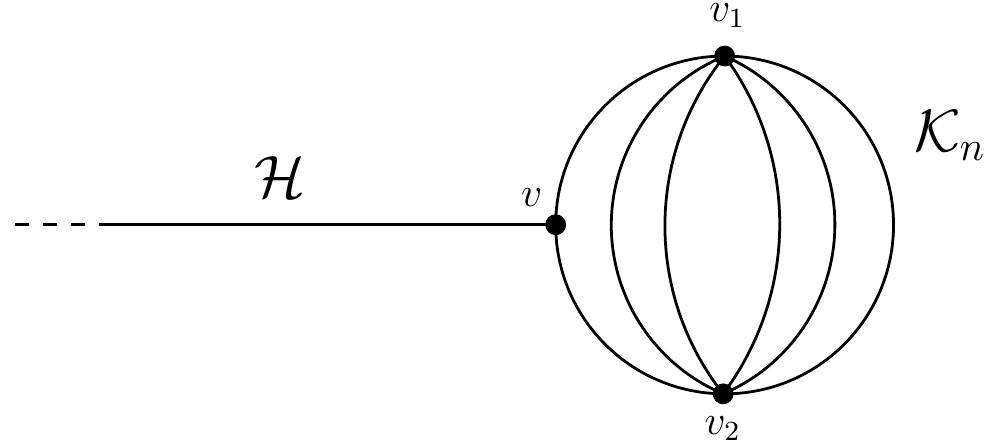}
		\caption{example of a graph $\G_n$ as in Proposition \ref{PROP-METRIC INTRICATED K}. Here, for every $n$, $\K_n$ consists of $2n+1$ edges, all of length smaller than a fixed constant, joining the vertices $v_1,\,v_2$, plus two additional edges linking $v_1$ and $v_2$ to $\h$.}
		\label{FIGURE-intricated K}
	\end{figure}
	
	\begin{prop}
		\label{PROP-METRIC INTRICATED K}
		For every $n\in\mathbb{N}$, let $\G_n$ be the graph given by a half-line $\h$ and a compact core $\K_n$ attached to the origin of $\h$ at some point $\vv$ (see, e.g, Figure \ref{FIGURE-intricated K}). Let also $C_{\K_n}$ be the optimal constant \eqref{EQ-def C_6.K} when $\G=\G_n$. If $\text{diam}(\K_n)\leq C$ uniformly in $n$, for some $C>0$, and $|\K_n|\to+\infty$ as $n\to\infty$, then 
		\begin{equation}
		 \label{eq-asint2}
		 C_{\K_n}\longrightarrow1,\qquad\text{as}\quad n\to\infty.
		\end{equation}
	\end{prop}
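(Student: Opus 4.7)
The lower bound $\liminf_n C_{\K_n}\ge 1$ is immediate from Proposition~\ref{PROP- C_6,K geq 1 G with 1 half-line}, so the content of the statement lies in the matching upper bound $\limsup_n C_{\K_n}\le 1$. Since $C_\rr<1<C_{\K_n}$, Lemma~\ref{LEM-C K attained} provides a non-negative maximizer $u_n$ of $Q$ on $\G_n$, which I normalize by $\|u_n\|_{2,\G_n}^2=1$. Let $\vv$ denote the vertex where $\h$ meets $\K_n$ and set $c_n:=u_n(\vv)$, $L_n:=|\K_n|$, $b_n:=\|u_n'\|_{2,\K_n}^2$, $M_n:=\|u_n\|_{2,\h}^2$, $H_n:=\|u_n'\|_{2,\h}^2$, and $\eta_n:=\sqrt{D\,b_n}$ with $D:=\sup_n\mathrm{diam}(\K_n)<\infty$.

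Two ingredients drive the argument. The first is a diameter-based Poincar\'e estimate: integrating $u_n'$ along a shortest path from $\vv$ to $x\in\K_n$ and applying Cauchy–Schwarz gives the pointwise sandwich $|u_n(x)-c_n|\le\eta_n$, whence
\[
\|u_n\|_{6,\K_n}^6\le(c_n+\eta_n)^6 L_n\qquad\text{and}\qquad\|u_n\|_{2,\K_n}^2\ge(c_n-\eta_n)_+^2 L_n.
\]
The second is Remark~\ref{rem-cost+roba}: the function $w_n$ equal to $c_n$ on $\K_n$ and to $u_n$ on $\h$ is constant on the compact core, hence $c_n^6 L_n\le(c_n^2 L_n+M_n)^2 H_n$. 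Writing $\theta_n:=\eta_n/c_n$, plugging the first estimate into $Q(u_n)$, discarding $b_n$ from the denominator $b_n+H_n$, and then using the second estimate to bound $1/H_n$, one arrives at
\[
Q(u_n)\;\le\;(1+\theta_n)^6\bigl(c_n^2 L_n+M_n\bigr)^2.
\]
The normalization $\|u_n\|_{2,\G_n}^2=1$ together with the lower Poincar\'e bound forces $c_n^2 L_n\le 1/(1-\theta_n)^2$ (so $c_n^2 L_n$ stays bounded whenever $\theta_n\le 1/2$) and $|c_n^2 L_n+M_n-1|=O(\theta_n)$, yielding $C_{\K_n}=Q(u_n)\le 1+O(\theta_n)$. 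The proof therefore reduces to showing that the \emph{relative oscillation} $\theta_n$ vanishes as $n\to\infty$.

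The main obstacle is precisely this last reduction, which is not apparent from $L_n\to\infty$ alone. I plan to extract it from the Euler–Lagrange equation satisfied by the optimizer (derived as in the proof of Proposition~\ref{PROP- C_6,K geq 1 G with 1 half-line}): on $\h$ the equation forces $u_n(x)=c_n e^{-\alpha_n x}$ with $\alpha_n=\sqrt{2}\,\|u_n'\|_{2,\G_n}$, so that combined with the mass constraint one finds $c_n$ on the scale $L_n^{-1/2}$ and hence $c_n\to 0$; the Kirchhoff balance at $\vv$ couples $c_n\alpha_n$ to the inward derivatives of $u_n$ on $\K_n$, and together with the bounded diameter a Cauchy–Schwarz argument should yield $b_n=o(c_n^2)$, i.e.\ $\theta_n\to 0$. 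In the complementary regime $c_n\le 2\eta_n$ (where $c_n$ cannot act as a reference level on $\K_n$), the Poincar\'e estimate alone gives $\|u_n\|_{\infty,\K_n}\le 3\eta_n$, hence $\|u_n\|_{6,\K_n}^6\le 729\,D^3 b_n^3 L_n$ and $Q(u_n)\le 729\,D^3 b_n^2 L_n$; combined with the uniform bound $C_{\K_n}\le C_{\rr^+}$ from Proposition~\ref{PROP-C_6,K between rr and rr+ }, this constrains $b_n^2 L_n$ and, via a bootstrap through the sandwich estimates, returns the analysis to the main regime. Collecting the two cases delivers $\limsup_n C_{\K_n}\le 1$, and together with the already-established $C_{\K_n}>1$ completes the proof.
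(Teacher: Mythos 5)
Your overall architecture is the right one and matches Step (ii) of the paper's proof: take the maximizer $u_n$ supplied by Lemma \ref{LEM-C K attained}, control its oscillation on $\K_n$ by integrating $u_n'$ along paths of length at most $\mathrm{diam}(\K_n)$ and applying Cauchy--Schwarz, and compare with the competitor that is constant on the compact core via Remark \ref{rem-cost+roba}. The paper does exactly this (with the reference level $m_n=\min_{\K_n}u_n$ rather than $u_n(\vv)$), and the conclusion there too reduces to showing that the relative oscillation $\mathrm{diam}(\K_n)^{1/2}\|u_n'\|_{2,\G_n}/\|u_n\|_{\infty,\G_n}$ vanishes.

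The genuine gap is the ingredient you defer to the end: a sufficiently strong \emph{upper} bound on $\|u_n'\|_{2,\G_n}$. With your normalization one needs $b_n=o(c_n^2)=o(|\K_n|^{-1})$, i.e.\ $\|u_n'\|_{2,\G_n}=o(|\K_n|^{-1/2})$, and nothing in your sketch produces this. The elementary inequalities available go the wrong way: from $\|u_n\|_{6,\K_n}^6=C_{\K_n}\|u_n'\|_{2,\G_n}^2$ and \eqref{EQ-GN infty} one only gets $\|u_n'\|_{2,\G_n}\gtrsim|\K_n|^{-1}$, a lower bound. Your proposed substitute --- ``the Kirchhoff balance at $\vv$ \dots together with a Cauchy--Schwarz argument should yield $b_n=o(c_n^2)$'' --- is not an argument: the Kirchhoff condition is a single pointwise identity for the derivatives at $\vv$ and cannot control the $L^2$ norm of $u_n'$ over $\K_n$; and the explicit exponential profile on $\h$ gives $\|u_n'\|_{2,\h}^2=c_n^2\alpha_n/2$ with $\alpha_n=\sqrt2\,\|u_n'\|_{2,\G_n}$, so deducing $b_n=o(c_n^2)$ from it already presupposes $\alpha_n=O(c_n^2)$, i.e.\ the very decay rate you are trying to prove. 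The complementary regime $c_n\le 2\eta_n$ is likewise not closed: the inequality $1<C_{\K_n}\le 729D^3b_n^2|\K_n|$ only bounds $b_n$ from \emph{below}, and the claimed ``bootstrap'' is not specified. The paper fills this hole with a separate and rather delicate Step (i): it rearranges $u_n$ on $\gamma_n\cup\h$ and on $\overline{\K_n/\gamma_n}$ separately, glues the two rearrangements into a function on a half-line of length $\sim|\K_n|$, and invokes the construction behind the modified Gagliardo--Nirenberg inequality \eqref{EQ-modified GN} (from \cite[Lemma 4.4]{AST-CMP}) to obtain $\bigl(1-\mu_{\K_n}^2/\mu_\rr^2\bigr)\|u_n'\|_{2,\G_n}^2\le C/\ell_n^2$ with $\ell_n\sim|\K_n|$, hence $\|u_n'\|_{2,\G_n}\,|\K_n|\le C$. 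That estimate is exactly what makes the relative oscillation $O(|\K_n|^{-1/2})$ and is the part of the proof your proposal is missing.
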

	
	\begin{proof}
	    We divide the proof in two steps.
	    
	    \emph{Step (i): behavior of the optimizers of $C_{\K_n}$.} By Proposition \ref{PROP- C_6,K geq 1 G with 1 half-line}, $C_{\K_n}>1>C_\rr$, and hence, for every $n\in\mathbb{N}$, Lemma \ref{LEM-C K attained} entails that there exists $u_n\in H^1(\G_n)$, $u\neq0$, such that $Q(u_n)=C_{\K_n}$. Moreover, by standard regularity theory for \eqref{eq-ELGN} (see e.g. \cite[Proposition 3.3]{AST-CVPDE}) we can assume $u_n>0$ and, from the homogeneity of $Q$, $\|u_n\|_{2,\G}^2=\mu_{\K_n}$ so that
		\begin{equation}
			\label{EQ-quotient optimal proof intricated K}
			\frac{\|u_n\|_{6,\K_n}^6}{\|u_n'\|_{2,\G_n}^2}=3,\qquad\forall n\in\mathbb{N}\,.
		\end{equation}
		On the other hand, note that, $\|u_n\|_{\infty,\G_n}=\|u_n\|_{\infty,\K_n}$. Indeed, if this is not the case, simply setting $v_n(x):=\min\{u_n(x),\|u_n\|_{\infty,\K_n}\}$ for every $x\in\G_n$, one can see that $v_n\in H^1(\G_n)$ and that $Q(u_n)<Q(v_n)$, which is a contradiction. As a consequence, denoting by $\overline{x}_n\in\K_n$ a point such that $u_n(\overline{x}_n)=\|u_n\|_{\infty,\G_n}=:M_n$, and by $\gamma_n\subset\K_n$ the smallest path from $\overline{x}_n$ to vertex $\vv$, one can easily check that $\K_n/\gamma_n$ is connected, regardless of the location of $\overline{x}_n$.
		
		Now, consider the restriction of $u_n$ to $\gamma_n\cup\h$, i.e. $u_{n_{|\gamma_n\cup\h}}$, and let $\eta_n^1:[0,+\infty)\to\rr$ be its decreasing rearrangement on $\rr^+$, so that $\eta_n^1(0)=M_n$,
		\[
		\int_{0}^{+\infty}|(\eta_n^1)'|^2\,dx\leq\int_{\gamma_n\cup\h}|u_n'|^2\,dx\qquad\text{and}\qquad\int_{0}^{+\infty}|\eta_n^1|^p\,dx=\int_{\gamma_n\cup\h}|u_n|^p\,dx\qquad\forall p\geq1.
		\]
		On the other hand, since $\K_n/\gamma_n$ is connected, the image of $u_{n_{|\overline{\K_n/\gamma_n}}}$ is connected in turn. Setting $\ell_n:=|\overline{\K_n/\gamma_n}|$, we can define, therefore, the decreasing rearrangement of $u_{n_{|\overline{\K_n/\gamma_n}}}$ on the interval $[0,\ell_n]$, i.e. $\eta_n^2:[0,\ell_n]\to\rr$, which satisfies $\eta_n^2(0)=M_n$,
		\[
		\quad\int_{0}^{\ell_n}|(\eta_n^2)'|^2\,dx\leq\int_{\K_n/\gamma_n}|u_n'|^2\,dx\qquad\text{and}\qquad\int_{0}^{\ell_n}|\eta_n^2|^p\,dx=\int_{\K_n/\gamma_n}|u_n|^p\,dx\qquad\forall p\geq1.
		\]
		Then, define the function $\eta_n:(-\infty,\ell_n]\to\rr$ such that
		\[
		\eta_n(x):=\begin{cases}
		\eta_n^1(-x) & \text{if }x<0\\[.2cm]
		\eta_n^2(x) & \text{if }x\in[0,\ell_n].
		\end{cases}
		\]
		Exploiting the properties of $\eta_n^1$ and $\eta_n^2$, one can easily see that $\eta_n\in H^1(-\infty,\ell_n)$, $\|\eta_n'\|_{L^2(-\infty,\ell_n)}\leq\|u_n'\|_{2,\G_n}$ and $\|\eta_n\|_{L^p(-\infty,\ell_n)}=\|u_n\|_{p,\G_n}$ for every $p\geq1$.
		
		As a further step, arguing as in \cite[Proof of Lemma 4.4, Step 2--Step 3]{AST-CMP}, one can construct a sequence of functions $v_n\in H^1(\rr)$ such that, for some $\theta_n:=\theta_n(u_n)\in(0,\mu_{\K_n})$
		\begin{itemize}
			\item[(a)] $v_n(0)=\eta_n(0)=M_n$;\\[-.2cm]
			\item[(b)] $\int_\rr|v_n|^2\,dx=\int_{-\infty}^{\ell_n}|\eta_n|^2\,dx-\theta_n=\mu_{\K_n}-\theta_n$;\\[-.2cm]
			\item[(c)] $\int_\rr|v_n'|^2\,dx\leq\int_{-\infty}^{\ell_n}|\eta_n'|^2\,dx+\frac{C}{\ell_n^2}\theta_n^{1/2}\leq\|u_n'\|_{2,\G_n}^2+\frac{C}{\ell_n^2}\theta_n^{1/2}$;\\[-.2cm]
			\item[(d)] $\int_\rr|v_n|^6\,dx\geq\int_{-\infty}^{\ell_n}|\eta_n|^6\,dx-\frac{C}{\ell_n^2}\theta_n=\|u_n\|_{6,\G_n}^6-\frac{C}{\ell_n^2}\theta_n$;
		\end{itemize}
		where $C>0$ does not depend on $n$.
		
		Hence, combining (a)-(d) with \eqref{EQ-GN 6} for $\G=\rr$,
		\begin{equation*}
			\|u_n\|_{6,\G_n}^6-\frac{C}{\ell_n^2}\theta_n\leq\|v_n\|_{6,\rr}^6\leq3\frac{(\mu_{\K_n}-\theta_n)^2}{\mu_\rr^2}\|v_n'\|_{2,\rr}^2\leq3\frac{(\mu_{\K_n}-\theta_n)^2}{\mu_\rr^2}\Big(\|u_n'\|_{2,\G_n}^2+\frac{C}{\ell_n^2}\theta_n^{1/2}\Big)
		\end{equation*}
		and thus, rearranging terms and recalling that $\theta_n<\mu_{\K_n}<\sqrt{3}<\mu_\rr$ (possibly redefining $C$), 
		\begin{equation}
			\label{EQ-bound on L6 proof intricated}
			\|u_n\|_{6,\G_n}^6\leq3\frac{\mu_{\K_n}^2}{\mu_\rr^2}\|u_n'\|_{2,\G_n}^2+\frac{C}{\ell_n^2}\,.
		\end{equation}
		Moreover, plugging \eqref{EQ-quotient optimal proof intricated K} into \eqref{EQ-bound on L6 proof intricated}, we get
		\begin{equation*}
			\|u_n'\|_{2,\G_n}^2=\frac{1}{3}\|u_n\|_{6,\K_n}^6\leq\frac{1}{3}\|u_n\|_{6,\G_n}^6\leq\frac{\mu_{\K_n}^2}{\mu_\rr^2}\|u_n'\|_{2,\G_n}^2+\frac{C}{\ell_n^2},
		\end{equation*}
		that is
		\begin{equation}
		\label{eq-boundprel}
			\Big(1-\frac{\mu_{\K_n}^2}{\mu_\rr^2}\Big)\|u_n'\|_{2,\G_n}^2\leq\frac{C}{\ell_n^2}\,.
		\end{equation}
		Since both $|\K_n|\to+\infty$  as $n\to\infty$ and $|\gamma_n|\leq\text{diam}(\K_n)\leq C$ for every $n$, then $\ell_n\sim|\K_n|$ provided $n$ is large enough, so that \eqref{eq-boundprel} implies 
		\begin{equation}
		\label{EQ-bound on u' proof intricated}
		\|u_n'\|_{2,\G_n}|\K_n|\leq C
		\end{equation}
		 
		\noindent uniformly in $n$.
		 
		\emph{Step (ii): proof of \eqref{eq-asint2}.} Let $m_n:=\min_{x\in \K_n}u_n(x)$ and $y_n\in\K_n$ be such that $u_n(\overline{y}_n)=m_n$. Furthermore, let $z_n\in\h$ be the closest point (of the half-line) to the compact core such that $u_n(z_n)=m_n$ (possibly $z_n=\vv$ if the minimum is attained at the vertex joining $\h$ and $\K_n$).
		
		We then consider the functions defined as
		
		\[
		\widetilde{u}_n(x):=\begin{cases}
		m_n & \text{if }x\in\K_n\\
		u_n(x+z_n) & \text{if }x\in\h\,.
		\end{cases}
		\]
		
		\noindent Clearly, $\widetilde{u}_n\in H^1(\G_n)$ and, by construction,  $\|\widetilde{u}\|_{2,\G_n}\leq\|u_n\|_{2,\G_n}$, $\|\widetilde{u}_n'\|_{2,\G_n}\leq\|u_n'\|_{2,\G_n}$, so that 
		
		\begin{equation}
		\label{eq-ckaux}
			C_{\K_n}\leq\frac{\|u_n\|_{\infty,\G_n}^6|\K_n|}{\|u_n\|_{2,\G_n}^4\|u_n\|_{2,\G_n}^2}=\frac{\|u_n\|_{\infty,\G_n}^6}{m_n^6}\frac{\|\wt{u}_n\|_{6,\K_n}^6}{\|\wt{u}_n\|_{2,\G_n}^4\|\wt{u}_n'\|_{2,\G_n}^2}\leq\frac{\|u_n\|_{\infty,\G_n}^6}{m_n^6},
		\end{equation}
		
		\noindent the last inequality being motivated by Remark \ref{rem-cost+roba}.
		 
		 As $C_{\K_n}>1$, it is then left to estimate $\|u_n\|_{\infty,\G_n}^6/m_n^6$. First, recall that $\|u_n\|_{\infty,\G_n}=\|u_n\|_{\infty,\K_n}=u_n(\overline{x}_n)$, with $\overline{x}_n\in\K_n$ defined in Step (i), whereas $m_n=u_n(\overline{y}_n)$. Let  $\Gamma_n\subset\K_n$ be the smallest path from $\overline{y}_n$ to $\overline{x}_n$. Then we have
		\begin{equation}
			\label{EQ-quotient rewritten}
			\frac{m_n}{\|u_n\|_{\infty,\G_n}}=1-\frac{u_n(\bar{x}_n)-u_n(\bar{y}_n)}{\|u_n\|_{\infty,\G_n}}=1-\frac{\int_{\Gamma_n}u_n'\,dx}{\|u_n\|_{\infty,\G_n}}\,.
		\end{equation}
		Now, let us show that 
		\begin{equation}
			\label{EQ-final quotient to 0}
			\lim_{n}\frac{\int_{\Gamma_n}u_n'\,dx}{\|u_n\|_{\infty,\G_n}}=0\,.
		\end{equation}
		By H\"older's inequality
		\[
		\int_{\Gamma_n}|u_n'|\,dx\leq|\Gamma_n|^{1/2}\|u_n'\|_{2,\Gamma_n}\leq\text{diam}(\K_n)^{1/2}\|u_n'\|_{2,\G_n},
		\]
		so that
		\begin{equation}
			\label{EQ-quotient u' u infty}
			\frac{\int_{\Gamma_n}u_n'\,dx}{\|u_n\|_{\infty,\G_n}}\leq\frac{\text{diam}(\K_n)^{1/2}\|u_n'\|_{2,\G_n}}{\|u_n\|_{\infty,\G_n}}\,.
		\end{equation}
		Moreover, by \eqref{EQ-quotient optimal proof intricated K},
		\[
		\|u_n'\|_{2,\G_n}^2=\frac{1}{3}\|u_n\|_{6,\K_n}^6\leq\frac{1}{3}\|u_n\|_{\infty,\G_n}^6|\K_n|,
		\]
		which yields at
		\begin{equation}
			\label{EQ-u' over u infty bounded above}
			\frac{\|u_n'\|_{2,\G_n}}{\|u_n\|_{\infty,\G_n}}\leq\frac{|\K_n|^{1/2}}{\sqrt{3}}\|u_n\|_{\infty,\G_n}^2\,.
		\end{equation}
		By \eqref{EQ-GN infty} and the fact that the optimal constant $C_\infty=\sqrt{2}$, for every $\G_n$ (see Section \ref{sec-prel})
		\[
		\frac{|\K_n|^{1/2}}{\sqrt{3}}\|u_n\|_{\infty,\G_n}^2\leq\frac{2|\K_n|^{1/2}}{\sqrt{3}}\mu_{\K_n}^{1/2}\|u_n'\|_{2,\G_n}\leq \frac{C}{|\K_n|^{1/2}}\longrightarrow0\qquad\text{as}\quad n\to\infty,
		\]
		making use of \eqref{EQ-bound on u' proof intricated}. Plugging into \eqref{EQ-u' over u infty bounded above}, we get
		\[
		\lim_{n}\frac{\|u_n'\|_{2,\G_n}}{\|u_n\|_{\infty,\G_n}}=0,
		\]
		which, combined with \eqref{EQ-quotient u' u infty} and $\text{diam}(\K_n)< C$, implies \eqref{EQ-final quotient to 0}. Hence, passing to the limit in \eqref{EQ-quotient rewritten},
		\[
		\lim_{n}\frac{m_n}{\|u_n\|_{\infty,\G_n}}=1
		\]
		and, recalling \eqref{eq-ckaux} and the fact that $C_{\K_n}>1$, \eqref{eq-asint2} follows.
	\end{proof}

	The proof of Theorem \ref{THM2} is therefore a straightforward application of Propositions \ref{PROP-METRIC TADPOLE} and \ref{PROP-METRIC INTRICATED K}.
	
	\begin{proof}[Proof of Theorem \ref{THM2}] Fix $\varepsilon>0$. The existence of $\G_\varepsilon^1$ is immediately guaranteed by Proposition \ref{PROP-METRIC TADPOLE}. At the same time, considering graphs as in Proposition \ref{PROP-METRIC INTRICATED K}, we easily obtain $\G_\varepsilon^2$.
	\end{proof}
	

	\subsection{Graphs without terminal edges and cycle-coverings, with at least two half-lines}
	\label{sec-iv}
	
	In this last section, we discuss graphs without terminal edges and cycle-coverings, with at least two half-lines (for which it is true that $\mu_{\rr^+}<\mu_\K\leq\mu_\rr$, i.e. $C_\rr\leq C_\K < C_{\rr^+}$, and that ground states exist if and only if $\mu\in[\mu_\K,\mu_\rr]$, provided that $\mu_\K\neq\mu_\rr$).
	
	Recall that, in a graph having at least two half-lines and neither a terminal edge nor a cycle-covering, there exists of at least one cut edge in the compact core, that is an edge that provides the only connection between two disjoint subgraphs. For the sake of simplicity, in what follows we develop our analysis in the case of a signpost graph, as in Figure \ref{FIGURE-signpost}, with exactly one cut edge and a circle attached to it. 
	
	For every $\ell>0$, let $\G_\ell$ be the graph depicted in Figure \ref{FIGURE-signpost}, with compact core $\K_\ell$ given by the cut edge $\mathcal{B}_\ell$, of length $\ell$, and the circle $\Gamma$. Moreover, denote by $\mathcal{H}_1,\mathcal{H}_2$ the left and the right half-line of $\G_\ell$, respectively, both identified with $[\ell,+\infty)$. In addition, identify the cut edge $\mathcal{B}_\ell$ with the interval $[0,\ell]$, in such a way that $x_{\mathcal{B}_\ell}=\ell$ ($x_{\mathcal{B}_\ell}$ being the coordinate on $\mathcal{B}_\ell$) denotes the vertex $\vv$ such that $\{\vv\}=\h_1\cap\h_2$. Let also $C_{\K_\ell}$ be the optimal constant \eqref{EQ-def C_6.K} when $\G=\G_\ell$.
	
	We then have the first asymptotic result.
	
	\begin{figure}[t]
		\centering
		\includegraphics[width=0.6\textwidth]{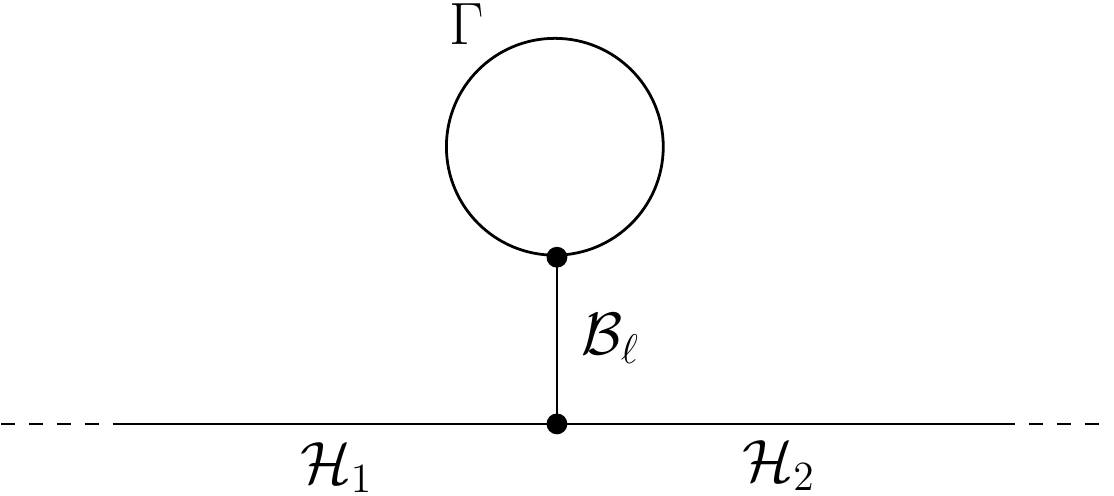}
		\caption{the signpost graph.}
		\label{FIGURE-signpost}
	\end{figure}
	
	\begin{prop}
		\label{PROP-METRIC SIGNPOST 1}
         Let $\G_\ell$ be the graph depicted in Figure \ref{FIGURE-signpost}. Then, the following asymptotics holds:
        \begin{equation}
         \label{eq-asint3}
         C_{\K_\ell}\longrightarrow C_{\rr^+}\qquad\text{as}\quad \ell\to\infty.
        \end{equation}
	\end{prop}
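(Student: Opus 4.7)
The plan is to mirror the strategy of Proposition \ref{PROP-METRIC TADPOLE}: since Proposition \ref{PROP-C_6,K with respect to rr+} already guarantees $C_{\K_\ell}<C_{\rr^+}$ for every $\ell>0$, it suffices to exhibit a sequence of test functions $u_\lambda\in H^1(\G_\ell)$ with $Q(u_\lambda)\to C_{\rr^+}$ as $\ell\to\infty$. The natural candidate is a rescaled half-soliton placed along $\mathcal{B}_\ell$ and continued onto both half-lines, with a constant cap on the circle $\Gamma$.

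Concretely, setting $\lambda:=\ell^{-1/2}$, I would define
\[
u_\lambda(x):=\begin{cases} \sqrt{\lambda} & \text{if }x\in\Gamma,\\ \phi_\lambda(x) & \text{if }x\in\mathcal{B}_\ell=[0,\ell],\\ \phi_\lambda(x) & \text{if }x\in\h_i=[\ell,\infty),\ i=1,2,\end{cases}
\]
with $\phi_\lambda$ as in \eqref{EQ-def soliton lambda}. Since $\phi_\lambda(0)=\sqrt{\lambda}$, continuity at the vertex joining $\Gamma$ and $\mathcal{B}_\ell$ is automatic, and at the vertex $\vv$ where $\h_1$, $\h_2$ meet $\mathcal{B}_\ell$ every incident edge reads $\phi_\lambda(\ell)$, so $u_\lambda\in H^1(\G_\ell)$. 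Observe that $\mathcal{B}_\ell\cup\h_1$ naturally identifies with $[0,\infty)$, so $\phi_\lambda$ lives there exactly as on $\rr^+$. The change of variables $y=\lambda x$ and the scaling $\phi_\lambda(x)=\sqrt{\lambda}\,\phi_1(\lambda x)$ then yield, after bookkeeping of the $\Gamma$-, $\mathcal{B}_\ell$-, $\h_1$-, $\h_2$-contributions,
\[
\|u_\lambda\|_{2,\G_\ell}^2=\lambda|\Gamma|+\|\phi_1\|_{2,\rr^+}^2+\|\phi_1\|_{2,[\ell^{1/2},\infty)}^2,
\]
\[
\|u_\lambda'\|_{2,\G_\ell}^2=\ell^{-1}\bigl(\|\phi_1'\|_{2,\rr^+}^2+\|\phi_1'\|_{2,[\ell^{1/2},\infty)}^2\bigr),
\]
\[
\|u_\lambda\|_{6,\K_\ell}^6=\lambda^3|\Gamma|+\ell^{-1}\|\phi_1\|_{6,[0,\ell^{1/2}]}^6.
\]
Since $\lambda|\Gamma|\to0$ and $\phi_1,\phi_1'\in L^p(\rr^+)$ for every $p\geq 2$, the tails over $[\ell^{1/2},\infty)$ and the $\lambda^3|\Gamma|$ term are negligible with respect to the leading order, and one obtains $Q(u_\lambda)\to \|\phi_1\|_{6,\rr^+}^6/(\|\phi_1\|_{2,\rr^+}^4\|\phi_1'\|_{2,\rr^+}^2)=C_{\rr^+}$, because $\phi_1$ is an optimizer for the half-line inequality.

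The proof is essentially bookkeeping and no serious obstacle arises. The only structural difference with respect to the tadpole argument is that the vertex at $x=\ell$ is now incident to two half-lines rather than to a bounded subgraph $\Gamma_2$; the second half-line contributes extra terms to each of the three norms, but these terms involve only the rapidly decaying tails $\|\phi_1\|_{L^p([\ell^{1/2},\infty))}^p$ and are strictly of lower order than the $\mathcal{B}_\ell$-contribution scaling like $\ell^{-1}$. Combining $C_{\rr^+}\geq C_{\K_\ell}\geq Q(u_\lambda)\to C_{\rr^+}$ yields \eqref{eq-asint3}.
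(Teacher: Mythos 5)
Your proposal is correct and is essentially identical to the paper's own proof: the same test function (a half-soliton scaled by $\lambda=\ell^{-1/2}$ on $\mathcal{B}_\ell\cup\h_1\cup\h_2$, capped by the constant $\phi_\lambda(0)=\sqrt{\lambda}$ on $\Gamma$), the same three norm computations, and the same conclusion via $Q(u_\lambda)\leq C_{\K_\ell}<C_{\rr^+}$.
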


	\begin{proof}
	    As in the proof of Proposition \ref{PROP-METRIC TADPOLE}, let $\{\phi_{\lambda}\}$ be the sequence of the half-solitons on $\rr^+$, i.e. the sequence of the restrictions to $\rr^+$ of the functions given by \eqref{EQ-def soliton lambda}. Then, set $\lambda:=\ell^{-1/2}$, for every $\ell>0$, and define the sequence $\{u_\lambda\}\subset H^1(\G_\ell)$ such that
		\[
		u_\lambda(x):=\begin{cases}
		\phi_\lambda(0) & \text{if }x\in\Gamma\\
		\phi_\lambda(x) & \text{if }x\in\G_\ell/\Gamma.
		\end{cases}
		\]
		Note that, as both $\h_1$ and $\h_2$ are identified with the interval $[\ell,\infty)$, then $u_{\lambda_{|\h_1}}\equiv u_{\lambda_{|\h_2}}$. Hence, direct computations (recalling $\phi_1(0)=1$) yield
		\[
		\begin{split}
		\|u_\lambda\|_{2,\G_\ell}^2=&\lambda|\Gamma|+\int_{\mathcal{B}_\ell\cup\h_1}\lambda|\phi_1(\lambda x)|^2\,dx+\int_{\h_2}\lambda|\phi_1(\lambda x)|^2\,dx=\ell^{-1/2}|\Gamma|+\|\phi_1\|_{2,\rr^+}^2+\|\phi_1\|_{L^2(\ell^{1/2},\infty)}^2\\[.2cm]
		\|u_\lambda'\|_{2,\G_\ell}^2=&\int_{\mathcal{B}_\ell\cup\h_1}\lambda^3|\phi_1'(\lambda x)|^2\,dx+\int_{\h_2}\lambda^3|\phi_1'(\lambda x)|^2\,dx=\ell^{-1}\Big(\|\phi_1'\|_{2,\rr^+}^2+\|\phi_1'\|_{L^2(\ell^{1/2},\infty)}^2\Big)\\[.2cm]
		\|u_\lambda\|_{6,\K_\ell}^6=&\lambda^3|\Gamma|+\int_{\mathcal{B}_\ell}\lambda^3|\phi_1(\lambda x)|^6\,dx=\ell^{-1}\Big(\ell^{-1/2}|\Gamma|+\|\phi_1\|_{L^6(0,\ell^{1/2})}^6\Big)
		\end{split}
		\]
		and, since as $\ell\to\infty$, $\ell^{-1/2}|\Gamma|, \|\phi_1\|_{L^2(\ell^{1/2},\infty)}$ and $\|\phi_1'\|_{L^2(\ell^{1/2},\infty)}$ tend to $0$, and $\|\phi_1\|_{L^6(0,\ell^{1/2})}\to\|\phi\|_{6,\rr^+}$, it follows that
		\[
		Q(u_\lambda)\longrightarrow\frac{\|\phi_1\|_{6,\rr^+}^6}{\|\phi_1\|_{2,\rr^+}^4\|\phi_1'\|_{2,\rr^+}^2}=C_{\rr^+}\,.
		\]
		Combining with the fact that $Q(u_\lambda)\leq C_\K<C_{\rr^+}$ for every $\lambda$, we obtain \eqref{eq-asint3}.
	\end{proof}
	
	\begin{rem}
	 \label{rem-ex}
	 Clearly, the proof of Proposition \ref{PROP-METRIC SIGNPOST 1} provides an example of graph such that $C_\K\neq C_\rr$, showing, as anticipated in the Introduction, that the class of graphs of case (iv) in Theorem \ref{THM1} fulfilling $\mu_\K\neq\mu_\rr$ is not empty.
	\end{rem}

	\begin{rem}
		It is readily seen that the proof of Proposition \ref{PROP-METRIC SIGNPOST 1} can be straightforwardly generalized to the case of several cut edges joining components (possibly) different than circles, simply repeating the previous construction for any fixed cut edge.
	\end{rem}
	
	The existence of a sequence of graphs whose optimal constants converge to $C_{\rr}$, on the contrary, requires more efforts. We aim at obtaining such a result by discussing the behaviour of $\G_\ell$, as before, but now in the regime $\ell\to0$.
	
	Some preliminary steps are required: a characterization of the behavior of the (possible) optimizers of $C_{\K_\ell}$ (Lemmas \ref{LEM- max optimizer on bridge}--\ref{LEMMA-optimizer on half-lines}) and a monotonicity property of $C_{\K_\ell}$ with respect to $\ell$ (Lemma \ref{LEMMA-monotonicity C_6 in G_l}). For the sake of simplicity, we denote in the following by $\h$ the union of the half-lines of $\G_\ell$, i.e. $\h:=\h_1\cup\h_2$.
	
	\begin{lem}
		\label{LEM- max optimizer on bridge}
		Let $\G_\ell$ be the graph depicted in Figure \ref{FIGURE-signpost}. If $u\in H^1(\G_\ell)$, $u>0$, is an optimizer of $C_{\K_\ell}$, then
		\[
		 M_{\mathcal{B}_\ell}:=\max_{x\in\mathcal{B}_\ell}u(x)>\max_{x\in\h}u(x)=:M_\h.
		\]
\end{lem}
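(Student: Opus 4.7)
The plan is to exploit the Euler--Lagrange equation satisfied by the optimizer $u$ together with the Kirchhoff condition at the vertex $\vv$ joining the cut edge $\mathcal{B}_\ell$ to the two half-lines. First, reasoning exactly as in the derivation of \eqref{eq-ELGN} within the proof of Proposition \ref{PROP- C_6,K geq 1 G with 1 half-line}, any positive optimizer $u$ of $Q$ lies in $\mathrm{dom}(-\Delta_{\G_\ell})$ and fulfils, on each edge $e$,
\[
-A(u)\,u_e''+B(u)\,u_e=\chi_{\K_\ell}\,C(u)\,|u_e|^4\,u_e,
\]
with the strictly positive constants $A(u),B(u),C(u)$ defined there, together with the Kirchhoff vertex conditions.

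Restricting this equation to either of the half-lines $\h_1$ and $\h_2$ kills the nonlinear term (since $\chi_{\K_\ell}\equiv 0$ there), so on each half-line $u$ solves the linear ODE $-A u''+B u=0$. As $u\in H^1(\G_\ell)\subset L^2(\h_i)$, only the decaying exponential branch is admissible; identifying $\h_i$ with $[0,+\infty)$ so that $\vv$ corresponds to $x=0$,
\[
u_{\h_i}(x)=u(\vv)\,e^{-\alpha x},\qquad\alpha:=\sqrt{B(u)/A(u)}>0,\qquad i=1,2.
\]
In particular $\max_{x\in\h}u(x)=u(\vv)$, attained at the vertex, and $\tf{du_{\h_i}}{dx_{\h_i}}(\vv)=u_{\h_i}'(0)=-\alpha u(\vv)$.

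Next I would plug these boundary derivatives into the Kirchhoff condition at $\vv$. Since the three edges incident at $\vv$ are $\mathcal{B}_\ell,\h_1,\h_2$, with $x_{\mathcal{B}_\ell}=\ell$ at $\vv$, so that $\tf{du_{\mathcal{B}_\ell}}{dx_{\mathcal{B}_\ell}}(\vv)=-u_{\mathcal{B}_\ell}'(\ell)$, the Kirchhoff identity becomes
\[
-u_{\mathcal{B}_\ell}'(\ell)-2\alpha\,u(\vv)=0,
\]
and hence $u_{\mathcal{B}_\ell}'(\ell)=-2\alpha\,u(\vv)<0$, the strict sign coming from $u>0$ and $\alpha>0$.

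Since $u_{\mathcal{B}_\ell}$ is continuously differentiable on $[0,\ell]$ with a strictly negative derivative at the right endpoint, there exists $x_0\in[0,\ell)$ such that $u_{\mathcal{B}_\ell}(x_0)>u_{\mathcal{B}_\ell}(\ell)=u(\vv)$, and therefore $M_{\mathcal{B}_\ell}\geq u_{\mathcal{B}_\ell}(x_0)>u(\vv)=M_\h$. The only genuine obstacle is the initial step, namely justifying that the Gagliardo--Nirenberg optimizer satisfies the Euler--Lagrange equation and the Kirchhoff conditions with all coefficients positive; but this is exactly what is already carried out in Proposition \ref{PROP- C_6,K geq 1 G with 1 half-line}, so the remainder is a clean one-line computation.
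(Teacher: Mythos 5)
Your argument is correct, but it follows a genuinely different route from the paper. The paper proves the lemma by contradiction via rearrangements: if $M_{\mathcal{B}_\ell}\leq M_\h$, then almost every level of $u$ has at least two preimages, so the symmetric rearrangement $\widehat{u}$ on $\rr$ gives $C_{\K_\ell}=Q(u)\leq C_\rr$; the case $C_{\K_\ell}>C_\rr$ is then immediately absurd by Proposition \ref{PROP-C_6,K between rr and rr+ }, while in the case $C_{\K_\ell}=C_\rr$ equality would force $\|\widehat{u}'\|_{2,\rr}=\|u'\|_{2,\G_\ell}$, which is impossible because near the degree-$3$ vertex a set of levels of positive measure is attained at least three times. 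Your proof instead goes through the Euler--Lagrange equation: the same first-variation computation already carried out in Step (ii) of Proposition \ref{PROP- C_6,K geq 1 G with 1 half-line} puts the optimizer in $\mathrm{dom}(-\Delta_{\G_\ell})$ with strictly positive coefficients $A,B,C$ (positivity needs $\|u\|_{6,\K_\ell}>0$ and $\|u'\|_{2,\G_\ell}>0$, both automatic for a maximizer of $Q$ on a noncompact graph), forces $u$ to be a decaying exponential $u(\vv)e^{-\alpha s}$ on each half-line, and the Kirchhoff balance at $\vv$ then yields $u_{\mathcal{B}_\ell}'(\ell)=-2\alpha u(\vv)<0$, so $u$ exceeds $u(\vv)=M_\h$ just inside the bridge. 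Your approach is more quantitative and, as a byproduct, already gives the monotone symmetric exponential profile on $\h$ that the paper has to establish separately (and by rearrangement again) in Lemma \ref{LEMMA-optimizer on half-lines}; its cost is reliance on the regularity/Euler--Lagrange machinery, which here is only sketched by reference to \cite{AST-CVPDE}. The paper's symmetrization argument is softer and self-contained at the variational level. Both are valid; just make sure you state explicitly that $A,B>0$ so that the half-line ODE has a genuine exponential dichotomy and $\alpha>0$ is well defined.
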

		
		\begin{proof}
		Suppose, by contradiction, that $M_{\mathcal{B}_\ell}\leq M_\h$. Then, for a.e. $t$ in the image of $u$,
		\[
		\symbol{35}\{x\in\G_\ell\,:\,u(x)=t\}\geq2\,.
		\]
		As a consequence, with $\widehat{u}$ being the symmetric rearrangement of $u$,
		\begin{equation}
		\label{EQ-contradiction lemma symmetry 1}
		C_{\K_\ell}=Q(u)\leq\frac{\|u\|_{6,\G_\ell}^6}{\|u\|_{2,\G_\ell}^4\|u'\|_{2,\G_\ell}^2}\leq\frac{\|\widehat{u}\|_{6,\rr}^6}{\|\widehat{u}\|_{2,\rr}^4\|\widehat{u}'\|_{2,\rr}^2}\leq C_\rr\,.
		\end{equation}
		Now, if $C_{\K_\ell}\neq C_\rr$, then it directly entails a contradiction, in view of Proposition \ref{PROP-C_6,K between rr and rr+ }. On the other hand, if $C_{\K_\ell}=C_\rr$, then \eqref{EQ-contradiction lemma symmetry 1} implies that $\widehat{u}$ is an optimizer of $C_\rr$ on the real line and, by the properties of symmetric rearrangements, that
		\begin{equation}
		 \label{eq-symnon}
		 \|\widehat{u}'\|_{2,\rr}=\|u'\|_{2,\G_\ell}.
		\end{equation}
        However, since $u$ runs through a vertex of degree $3$ and $M_{\mathcal{B}_\ell}\leq M_\h$, there exists a subregion of $\G_\ell$ of positive measure where all the values attained by $u$ possess at least three pre-images ,which contradicts \eqref{eq-symnon}.
	\end{proof}
	
	\begin{lem}
		\label{LEMMA-optimizer on half-lines}
		Let $\G_\ell$ be the graph depicted in Figure \ref{FIGURE-signpost}. If $u\in H^1(\G_\ell)$, $u>0$, is an optimizer of $C_{\K_\ell}$, then $u_{|_\h}$ is symmetric with respect to $\vv$ (recall $\{\vv\}:=\h_1\cap\h_2$) and non-increasing both on $\h_1$ and on $\h_2$.
	\end{lem}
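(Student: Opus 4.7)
The plan is to exploit the Euler--Lagrange equation that any optimizer of $C_{\K_\ell}$ must satisfy. Outside the compact core the equation becomes a homogeneous linear ODE, and its $H^1$-integrable solutions are rigidly determined by their value at $\vv$; symmetry and monotonicity then follow at once.

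First, let $u>0$ be an optimizer of $C_{\K_\ell}$. Repeating verbatim the variational computation already carried out in the proof of Proposition \ref{PROP- C_6,K geq 1 G with 1 half-line}, one obtains that $u$ satisfies \eqref{eq-ELGN}, namely
\[
-A(u)\,u_e''+B(u)\,u_e=\chi_{\K_\ell}\,C(u)\,|u_e|^4\,u_e,\qquad\forall e\in\mathrm{E},
\]
with $A(u),B(u),C(u)>0$ depending only on the integral norms of $u$, and satisfies the Kirchhoff conditions at every vertex of $\K_\ell$, in particular at $\vv$.

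Next, on each half-line $\h_i$ ($i=1,2$), $\chi_{\K_\ell}\equiv 0$, so the equation reduces to the constant-coefficient linear ODE $u_e''=\omega^2 u_e$ with $\omega^2:=B(u)/A(u)>0$. Parameterizing $\h_i$ by $[0,+\infty)$ with $x=0$ corresponding to $\vv$, the general solution reads
\[
u|_{\h_i}(x)=\alpha_i\,e^{\omega x}+\beta_i\,e^{-\omega x}.
\]
Since $u\in H^1(\G_\ell)\subset L^2(\G_\ell)$ and each half-line is unbounded, the growing mode must vanish, i.e.\ $\alpha_1=\alpha_2=0$. Continuity of $u$ at $\vv$ then forces $\beta_1=u(\vv)=\beta_2$, whence
\[
u|_{\h_1}(x)=u|_{\h_2}(x)=u(\vv)\,e^{-\omega x}\qquad\forall\,x\in[0,+\infty),
\]
which delivers both the symmetry with respect to $\vv$ and the (strict) monotone decrease on each half-line.

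The only nontrivial point is the derivation of the Euler--Lagrange equation, but this is the same computation already spelled out in Proposition \ref{PROP- C_6,K geq 1 G with 1 half-line}; the ODE analysis on the half-lines is then immediate. An alternative route via symmetric rearrangement of $u|_\h$ on $\h_1\cup\h_2\cong\rr$ would be considerably more delicate, since the rearranged profile would have to retain the value $u(\vv)$ at the origin in order to match $u|_{\mathcal{B}_\ell}$ and preserve the $H^1$-continuity of the optimizer; the ODE approach bypasses this difficulty entirely.
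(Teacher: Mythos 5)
Your proof is correct, but it takes a genuinely different route from the paper. The paper argues purely variationally, by rearrangement: it first invokes Lemma \ref{LEM- max optimizer on bridge} to ensure the maximum over the half-lines is not the global maximum, then splits the relevant subgraph into the superlevel set $A$ above $M_\h$ and its complement $B$, rearranges $u_A$ decreasingly and $u_B$ symmetrically, and reglues these pieces into a competitor $\wt u$ with strictly smaller kinetic energy whenever the claimed symmetry/monotonicity fails. You instead pass to the Euler--Lagrange equation \eqref{eq-ELGN} (derived exactly as in Step (ii) of the proof of Proposition \ref{PROP- C_6,K geq 1 G with 1 half-line}), observe that off the compact core it degenerates to $-A\,u''+B\,u=0$ with $A,B>0$, and read off that the only $L^2$ solution on each half-line is $u(\vv)e^{-\omega x}$ with $\omega^2=B/A=2\|u'\|_{2,\G_\ell}^2/\|u\|_{2,\G_\ell}^2>0$; continuity at $\vv$ then forces the two half-line profiles to coincide. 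Your argument is shorter, does not need Lemma \ref{LEM- max optimizer on bridge} as an input, and yields strictly more (the explicit exponential profile, strict decay, and the fact that $M_\h=u(\vv)$). Its cost is that it leans on the regularity/bootstrap step turning the vanishing first variation of $Q$ into the strong equation on each edge with continuity and Kirchhoff conditions; this is legitimate here (the paper itself uses exactly this machinery, via \cite{AST-CVPDE}, in Proposition \ref{PROP- C_6,K geq 1 G with 1 half-line} and Proposition \ref{PROP-METRIC INTRICATED K}), whereas the paper's rearrangement proof stays entirely at the level of the quotient $Q$ and requires no differentiability or regularity of the optimizer. Both are acceptable; just make sure you state explicitly that $\|u'\|_{2,\G_\ell}\neq0$ (which holds since a nonzero constant is not in $L^2$ of a noncompact graph) so that $Q$ is differentiable at $u$ and the coefficients $A,B$ are well defined and positive.
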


	\begin{proof}
	Let us prove first that $u_{|\h}$ attains the maximum $M_\h$ at the vertex $\vv$ and that it is non-increasing both on $\h_1$ and on $\h_2$. To this aim, assume by contradiction that $u_{|\h}$ does not posses any of the previous features. 
	
	Let $x_0$ be the closest point to the circle $\Gamma$ in $\mathcal{B}_\ell$ such that $u(x_0)=M_{\mathcal{B}_\ell}$ (see, e.g., Figure \ref{FIGURE-proof of lemma}(a)). Denote, also, by $\overline{G}$ the subgraph of $\G_\ell$ obtained by removing the circle $\Gamma$ and the segment that joins $x_0$ and the vertex $\vv'$ given by the intersection between $\Gamma$ and $\mathcal{B}_\ell$. Then, let
	\[
	\begin{split}
	A:=&\{x\in\overline{G}\,:\, u(x)>M_\h\}\\
	B:=&\{x\in\overline{G}\,:\, u(x)\leq M_\h\}\,.
	\end{split}
	\]
	and let $u_A,u_B$ be the restrictions of $u$ to $A$ and $B$, respectively. Note that $A$ and $B$ do not need to be connected sets, but the images of $u_A$ and $u_B$ are nevertheless both connected.
	
	By Lemma \ref{LEM- max optimizer on bridge} $M_\h< M_{\mathcal{B}_\ell}=\max_{x\in A}u(x)$, and hence $A\subset\mathcal{B}_\ell$ and $x_0+|A|\leq\ell$. Therefore, letting $u_A^*$ be the decreasing rearrangement of $u_A$ on $[0,|A|]$ (see, e.g., Figure \ref{FIGURE-proof of lemma}(b)), we get $u_A^*(0)=M_{\mathcal{B}_\ell}$, $u_A^*(|A|)=M_\h$ and $\|(u_A^*)'\|_{L^2(0,|A|)}\leq\|u_A'\|_{L^2(A)}$. On the other hand, since $M_\h=\max_{x\in B}u(x)$, it follows that $\symbol{35}\{x\in B\,:\,u(x)=t\}\geq2$ for almost every value $t\in(0,M_\h]$, and rearranging symmetrically $u_B$ on $\rr$ (see again Figure \ref{FIGURE-proof of lemma}(b)) we get $\widehat{u}_B(0)=M_\h$ and $\|\widehat{u}_B'\|_{L^2(\rr)}\leq\|u_B'\|_{L^2(B)}$. In addition, one can see that if $u_{|\h}$ does not attain $M_\h$ at $\vv$ or is non-monotone either on $\h_1$ or on $\h_2$, then there exists a subregion of $B$ with positive measure such that all the values attained there by $u_{\mid B}$ are actually realized at least three times each, leading to
	\begin{equation}
	 \label{eq-less}
	 \|\widehat{u}_B'\|_{L^2(\rr)}<\|u_B'\|_{L^2(B)}
	\end{equation}

	Then, we can use $u_A^*,\widehat{u}_B$ in order to construct a new function $\wt{u}\in H^1(\G_{\ell})$ (see, e.g., Figure \ref{FIGURE-proof of lemma}(c)). First, on $\G_\ell/\overline{G}$, set $\wt{u}\equiv u$, and on $\mathcal{B}_\ell\cap[x_0,x_0+|A|]$, set $\wt{u}(x)=u_A^*(x-x_0)$. In addition, consider the restriction of $\widehat{u}_B$ to the interval $\big[-\frac{\ell-x_0-|A|}{2},\frac{\ell-x_0-|A|}{2}\big]$ (see again Figure \ref{FIGURE-proof of lemma}(b)), rearrange it decreasingly on $[0,\ell-x_0-|A|]$, denoting by $\widehat{u}_B^*$ such a rearrangement, and then set $\wt{u}(x)=\widehat{u}_B^*(x-x_0-|A|)$ on $\mathcal{B}_\ell\backslash[0,x_0+|A|]$ (assuming that on $\mathcal{B}_\ell$ the vertex $\vv'$ is represented by $x=0$). Finally, define $\wt{u}$ on $\h$ as the restriction of $\widehat{u}_B$ to $\rr/\big[-\frac{\ell-x_0-|A|}{2},\frac{\ell-x_0-|A|}{2}\big]$, glued together at $0$.
	
	\begin{figure}[t]
		\centering
		\subfloat[][the function $u$ on $\G_{\ell}$]
		{\includegraphics[width=.35\columnwidth]{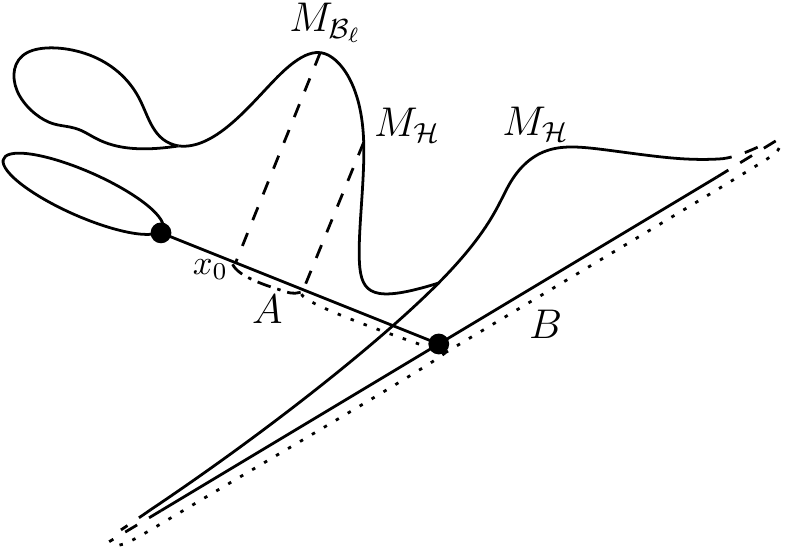}} \qquad\qquad
		\subfloat[][the decreasing rearrangement of $u_A$ on \text{$[0,|A|]$} and the symmetric rearrangement of $u_B$ on $\rr$]
		{\includegraphics[width=.50\columnwidth]{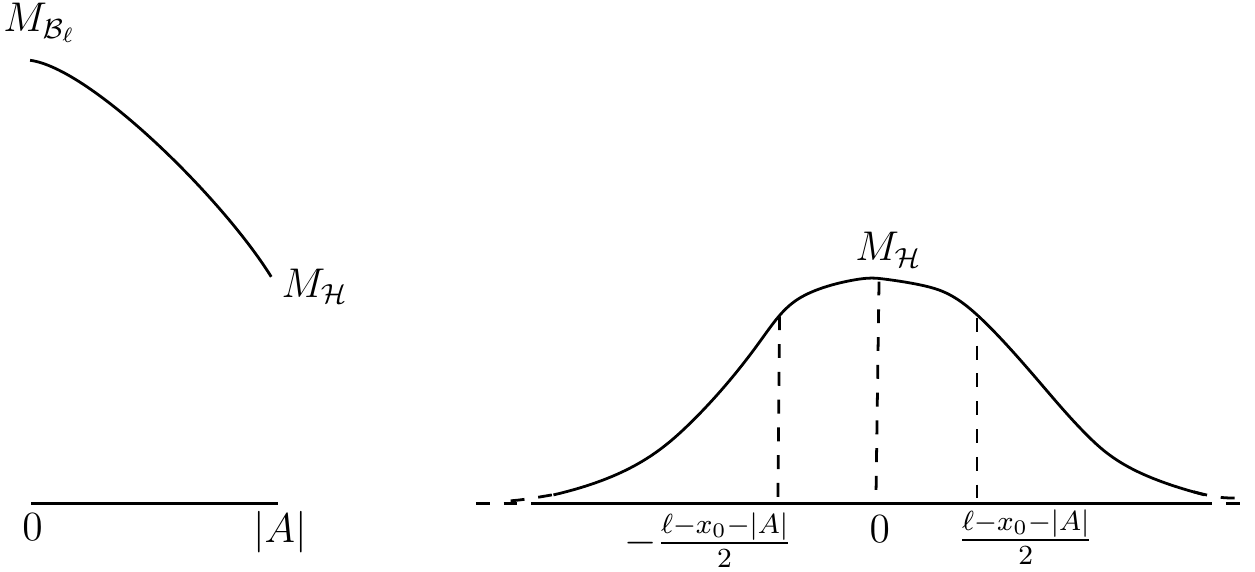}}\qquad
		
		\medskip
		\medskip
		
		\subfloat[][the function $\tilde{u}$ on $\G_{\ell}$]
		{\includegraphics[width=.35\textwidth]{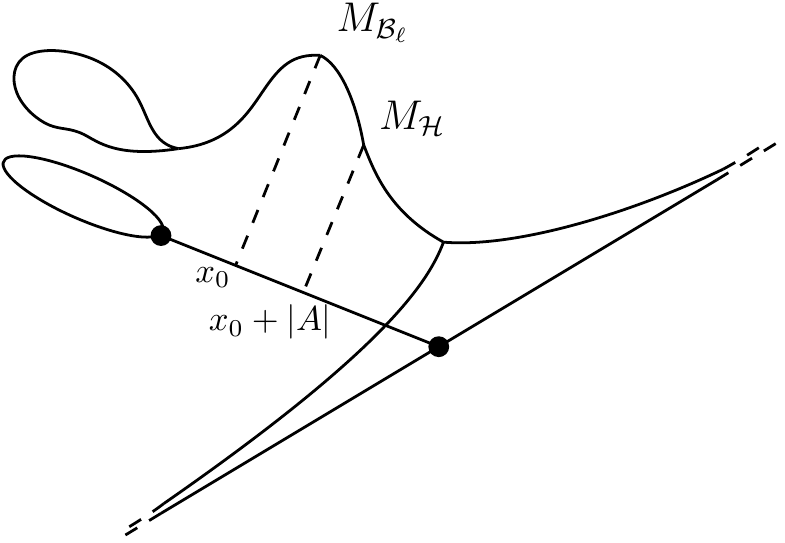}}
		\caption{the steps of the construction of $\tilde{u}$ starting from $u$ as in the proof of Lemma \ref{LEMMA-optimizer on half-lines}.}
		\label{FIGURE-proof of lemma}
	\end{figure}
	
	As a consequence, we have
	\[
	\begin{array}{l}
	\|\wt{u}\|_{2,\G_{\ell}}=\|u\|_{2,\G_{\ell}}\\[.2cm]
	\|\wt{u}'\|_{2,\G_{\ell}}<\|u'\|_{2,\G_{\ell}}\\[.2cm]
	\|\wt{u}\|_{6,\K_{\ell}}\geq\|u\|_{6,\K_{\ell}},
	\end{array}
	\]
	the strict inequality being given by \eqref{eq-less}, and thus
	\begin{equation*}
	C_{\K_\ell}=Q(u)< Q(\wt{u})\leq C_{\K_\ell},
	\end{equation*}
	i.e. a contradiction. Hence, $u_{|\h}$ has a maximum at $\vv$ and is non-increasing both on $\h_1$ and on $\h_2$.
	
	Moreover, if $u_{\h}$ is non-symmetric with respect to $\vv$, then a symmetric rearrangement on $\h$ would provide a better competitor (it is well known, indeed, that on the real line a symmetric rearrangement of a non-symmetric function has a strictly smaller kinetic energy), serving again a contradiction.
	\end{proof}
	
	\begin{rem}
	 Note that neither Lemma \ref{LEM- max optimizer on bridge} nor Lemma \ref{LEMMA-optimizer on half-lines} discusses the existence of optimizers for $C_{\K_\ell}$ in general. They only state some a priori conditions to be satisfied by possible minimizers.
	\end{rem}

	\begin{lem}
		\label{LEMMA-monotonicity C_6 in G_l} 
		Let $\G_\ell$ be the graph depicted in Figure \ref{FIGURE-signpost}. Then, for every $0<\ell_1\leq\ell_2$, it is true that
		\begin{equation*}
			C_{\K_{\ell_1}}\leq C_{\K_{\ell_2}}.
		\end{equation*}
	\end{lem}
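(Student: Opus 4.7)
The plan is to prove $C_{\K_{\ell_1}} \leq C_{\K_{\ell_2}}$ by producing, from (nearly) any $u$ on $\G_{\ell_1}$, a test function $v \in H^1(\G_{\ell_2})$ with $Q(v) \geq Q(u)$. The subcase $C_{\K_{\ell_1}} = C_\rr$ is trivial, since Proposition \ref{PROP-C_6,K between rr and rr+ } already gives $C_{\K_{\ell_2}} \geq C_\rr$. So I assume $C_{\K_{\ell_1}} > C_\rr$, invoke Lemma \ref{LEM-C K attained} to obtain an optimizer $u \in H^1(\G_{\ell_1})$, $u \geq 0$, and use Lemma \ref{LEMMA-optimizer on half-lines}, which guarantees that $u$ takes the same values on $\h_1$ and $\h_2$ and is non-increasing on each. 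With the paper's parametrization of the two half-lines as $[\ell_1, +\infty)$, write $u|_{\h_1}(x) = u|_{\h_2}(x) = f(x)$ for $x \geq \ell_1$.

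The construction of $v$ is a "sliding" trick: keep $v \equiv u$ on $\Gamma$ and on $[0,\ell_1] \subset \mathcal{B}_{\ell_2}$; on the extension $[\ell_1, \ell_2] \subset \mathcal{B}_{\ell_2}$ set $v(x) = f(x)$; on each half-line of $\G_{\ell_2}$ (now parametrized by $[\ell_2, +\infty)$) set $v(x) = f(x)$. Continuity at $x = \ell_1$ follows from $u(\ell_1) = f(\ell_1)$, and at the new vertex $x = \ell_2$ from the fact that all three branches there take the value $f(\ell_2)$; hence $v \in H^1(\G_{\ell_2})$.

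The accounting is then the heart of the argument: each of the two half-lines of $\G_{\ell_1}$ loses the contribution of $f$ on $[\ell_1, \ell_2]$, while the extended cut edge of $\G_{\ell_2}$ gains only one copy of that same contribution, so that
\[
\|v\|_{2,\G_{\ell_2}}^2 = \|u\|_{2,\G_{\ell_1}}^2 - \int_{\ell_1}^{\ell_2} f^2 \, dx, \qquad \|v'\|_{2,\G_{\ell_2}}^2 = \|u'\|_{2,\G_{\ell_1}}^2 - \int_{\ell_1}^{\ell_2} (f')^2 \, dx,
\]
whereas the $L^6$-contribution on the compact core only grows,
\[
\|v\|_{6,\K_{\ell_2}}^6 = \|u\|_{6,\K_{\ell_1}}^6 + \int_{\ell_1}^{\ell_2} f^6 \, dx.
\]
Therefore the numerator of $Q(v)$ is at least that of $Q(u)$ and the denominator is at most, yielding $Q(v) \geq Q(u) = C_{\K_{\ell_1}}$ and hence $C_{\K_{\ell_2}} \geq C_{\K_{\ell_1}}$. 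The main obstacle is spotting the right construction, and the key point is that the symmetry of the optimizer on the two half-lines provided by Lemma \ref{LEMMA-optimizer on half-lines} is precisely what delivers the favourable "double loss, single gain" bookkeeping and keeps $v$ continuous at the displaced vertex $\vv$ on both half-lines at once; without that symmetry the transplanted profile would fail to match the second half-line's boundary value.
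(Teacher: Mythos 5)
Your proof is correct, and it reaches the conclusion by a slightly different competitor than the paper's. Both arguments dispose of the case $C_{\K_{\ell_1}}=C_\rr$ trivially, take an optimizer $u$ on $\G_{\ell_1}$, and exploit Lemma \ref{LEMMA-optimizer on half-lines} to build a test function on $\G_{\ell_2}$; the difference lies in how the extra piece $[\ell_1,\ell_2]$ of the cut edge is filled. The paper removes a symmetric collar $I$ of total length $\ell_2-\ell_1$ centered at $\vv$, places its decreasing rearrangement on the extension, and translates the tails of the half-lines inward: this preserves the $L^2$ norm exactly, strictly decreases the kinetic energy (each value on $I$ has two preimages), and strictly increases $\|\cdot\|_{6,\K}$, but it needs both the symmetry \emph{and} the monotonicity of $u$ on the half-lines to glue the rearranged collar continuously. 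Your ``sliding'' construction instead copies the common half-line profile $f$ onto the extension and onto both new half-lines, so the two half-lines jointly lose two copies of $f$ on $[\ell_1,\ell_2]$ while the compact core gains one; the $L^2$ norm and the kinetic energy both decrease (rather than the former being conserved) and the $L^6$ norm on the core increases, which is still exactly the right direction for $Q$. A small advantage of your route is that it uses only the symmetry $u_{|\h_1}\equiv u_{|\h_2}$ from Lemma \ref{LEMMA-optimizer on half-lines}, not the monotonicity, and it avoids invoking any rearrangement inequality; the bookkeeping identities you state are easily checked and the continuity of $v$ at $\ell_1$ and at the displaced vertex is as you describe. One cosmetic remark: the existence of the optimizer should be credited to Lemma \ref{LEM-C K attained} (as you do), which applies since the signpost graph has no terminal edges and $C_{\K_{\ell_1}}>C_\rr$ by assumption.
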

	
	\begin{proof}
		If $C_{\K_{\ell_1}}=C_\rr$, then the claim is trivial by \eqref{EQ-C_6,K between rr and rr+ }.  Let thus $C_{\K_{\ell_1}}>C_\rr$. In view of Proposition \ref{PROP-C_6,K with respect to rr+}, there exists $u\in H^1(\G_{\ell_1})$ such that $Q(u)=C_{\K}(\G_{\ell_1})$, and, by Lemma \ref{LEMMA-optimizer on half-lines}, it is symmetric on $\h$ and non-increasing both on $\h_1$ and on $\h_2$.
		
		Setting $\Lambda:=\ell_2-\ell_1$, let $I\subset\h$ be a symmetric interval of measure $|I|=\Lambda$ centered at $\vv$, $u_{|I}$ the restriction of $u$ to $I$ and $u_{|I}^*$ its decreasing rearrangement of $[0,\Lambda]$. Subsequently, consider the function $v\in H^1(\G_{\ell_2})$ defined by
		\[
		v(x):=\begin{cases}
		u(x) & \text{if }x\in\Gamma\cup(\mathcal{B}_{\ell_2}\cap[0,\ell_1])\\
		u_{|I}^*(x-\ell_1) & \text{if }x\in\mathcal{B}_{\ell_2}\cap[\ell_1,\ell_2]\\
		u(|x|+\Lambda/2) & \text{if }x\in\h.
		\end{cases}
		\]
		As a consequence
		\[
		\|v\|_{2,\G_{\ell_2}}=\|u\|_{2,\G_{\ell_1}}\,,\quad\|v'\|_{2,\G_{\ell_2}}<\|u\|_{2,\G_{\ell_1}}\qquad\text{and}\qquad\|v\|_{6,\K_{\ell_2}}>\|u\|_{6,\K_{\ell_1}}.
		\]
		yielding
		\[
		C_{\K_{\ell_1}}=Q(u)<Q(v)\leq C_{\K_{\ell_2}}.
		\]
	\end{proof}
	
	Now, we can prove the existence of a sequence of graphs whose optimal constants converge to $C_\rr$.
		
	\begin{prop}
		\label{PROP-METRIC SIGNPOST 2}
		Let $\G_\ell$ be the graph depicted in Figure \ref{FIGURE-signpost}. Then, the following asymptotics holds:
		\[
		 C_{\K_\ell}\longrightarrow C_\rr\qquad\text{as}\quad \ell\to0.
		\]
	\end{prop}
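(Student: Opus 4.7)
The plan is to combine the monotonicity of $\ell\mapsto C_{\K_\ell}$ established in Lemma~\ref{LEMMA-monotonicity C_6 in G_l} with the universal lower bound $C_{\K_\ell}\geq C_\rr$ from Proposition~\ref{PROP-C_6,K between rr and rr+ }. Together these imply that $c:=\lim_{\ell\to 0^+}C_{\K_\ell}=\inf_{\ell>0}C_{\K_\ell}$ exists and satisfies $c\geq C_\rr$, so the task reduces to proving $c\leq C_\rr$. I would proceed by contradiction, assuming $c>C_\rr$, and derive an absurdity by comparing $\G_\ell$ with the ``flower graph'' $\G_0$ obtained from $\G_\ell$ by collapsing the cut edge $\mathcal{B}_\ell$ and identifying $\vv$ with $\vv'$. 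Since $\G_0$ admits a cycle-covering (the loop $\Gamma$, and the unbounded cycle running along $\h_1\cup\h_2$), Proposition~\ref{PROP-C_6,K with respect to rr+}(ii) gives $C_{\K_0}=C_\rr$.

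Under the contradiction assumption, $C_{\K_\ell}>C_\rr$ for all sufficiently small $\ell$, so Lemma~\ref{LEM-C K attained} yields an optimizer $u_\ell\in H^1(\G_\ell)$ for $C_{\K_\ell}$, which I may assume positive by standard regularity for the Euler-Lagrange equation \eqref{eq-ELGN}. Normalizing $\|u_\ell\|_{2,\G_\ell}^2=\mu_{\K_\ell}$ gives $\|u_\ell\|_{6,\K_\ell}^6=3\|u_\ell'\|_{2,\G_\ell}^2$. A uniform a~priori bound $\|u_\ell'\|_{2,\G_\ell}\leq C$ follows by plugging $u_\ell$ into the modified Gagliardo-Nirenberg inequality \eqref{EQ-modified GN}: since $\mu_{\K_\ell}^2=3/C_{\K_\ell}\leq 3/c<\mu_\rr^2$, the factor $(\mu_{\K_\ell}-\theta_{u_\ell})^2/\mu_\rr^2$ stays bounded away from $1$, so rearranging \eqref{EQ-modified GN} produces the required bound. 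Lemmas~\ref{LEM- max optimizer on bridge} and \ref{LEMMA-optimizer on half-lines} then ensure that $u_\ell(\vv)=M_\h$ while $u_\ell|_\h$ is symmetric and non-increasing on each half-line. Setting $a_\ell:=u_\ell(\vv')$, Cauchy-Schwarz on the bridge immediately yields $|a_\ell-M_\h|\leq\sqrt{\ell}\,\|u_\ell'\|_{2,\G_\ell}=O(\sqrt{\ell})\to 0$ as $\ell\to 0$.

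I would then construct a competitor $v_\ell\in H^1(\G_0)$ by setting $v_\ell|_\h:=u_\ell|_\h$ and $v_\ell|_\Gamma:=u_\ell|_\Gamma+(M_\h-a_\ell)$. The uniform shift on $\Gamma$ restores continuity at the collapsed vertex, as $v_\ell|_\Gamma(\vv')=a_\ell+(M_\h-a_\ell)=M_\h=v_\ell|_\h(\vv)$, and preserves the derivative $L^2$-norm on $\Gamma$. Using the uniform $L^\infty$-bound on $u_\ell$ coming from \eqref{EQ-GN infty}, the shift perturbs the $L^2$- and $L^6$-norms on $\Gamma$ by $O(\sqrt{\ell})$, and the discarded bridge contributions satisfy $\|u_\ell\|_{2,\mathcal{B}_\ell}^2,\,\|u_\ell\|_{6,\mathcal{B}_\ell}^6=O(\ell)$. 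Since $\|u_\ell'\|_{2,\G_\ell}^2$ (and hence $\|u_\ell\|_{6,\K_\ell}^6=3\|u_\ell'\|_{2,\G_\ell}^2$) is bounded below via an argument analogous to the one at the end of Step~(i) in the proof of Proposition~\ref{PROP-METRIC INTRICATED K}, these perturbations are $o(1)$ in relative terms. Putting everything together,
\[
C_\rr=C_{\K_0}\geq Q_{\G_0}(v_\ell)\geq(1-o(1))\,Q_{\G_\ell}(u_\ell)=(1-o(1))\,C_{\K_\ell},
\]
whence $C_{\K_\ell}\leq C_\rr(1+o(1))$, contradicting $C_{\K_\ell}\geq c>C_\rr$.

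The main obstacle is securing the uniform a~priori kinetic bound on the optimizers: the naive combination of the identity $\|u_\ell\|_{6,\K_\ell}^6=3\|u_\ell'\|_{2,\G_\ell}^2$ with \eqref{EQ-GN infty} alone yields only a lower bound on $\|u_\ell'\|_{2,\G_\ell}$, so the sharper estimate \eqref{EQ-modified GN} really is needed, and it only works because the contradiction hypothesis $c>C_\rr$ forces $\mu_{\K_\ell}$ strictly below $\mu_\rr$. A secondary technicality is that the shifted function $v_\ell$ may take small negative values on $\Gamma$, which is harmless after replacing it by $|v_\ell|\in H^1(\G_0)$, whose $L^p$- and kinetic norms are controlled by those of $v_\ell$.
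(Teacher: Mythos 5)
Your proposal is correct, and it reaches the conclusion by a genuinely different comparison than the paper. The paper first runs a dichotomy on the optimizers $v_\ell$: if $\limsup_{\ell\to0}\|v_\ell'\|_{2,\G_\ell}=\infty$, the modified Gagliardo--Nirenberg inequality \eqref{EQ-modified GN} forces $\mu_{\K_\ell}\to\mu_\rr$ directly; if the kinetic energies are bounded, it compares $\G_\ell$ with the graph $\wt{\G}_\ell$ obtained by \emph{doubling} the bridge into two edges of length $2\ell$ (which admits a cycle covering, hence has constant $C_\rr$), transplanting $v_\ell$ by a dilation so that the kinetic energy is exactly preserved and the extra bridge contributions to the $L^2$ and $L^6$ norms vanish as $\ell\to0$. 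You instead absorb the dichotomy into the contradiction hypothesis $c>C_\rr$ (which pins $\mu_{\K_\ell}$ away from $\mu_\rr$ and hence yields the uniform kinetic bound from \eqref{EQ-modified GN} in one stroke -- the same logical content, just packaged differently), and then compare with the \emph{collapsed} flower graph $\G_0$, which is $\ell$-independent and also falls under Proposition \ref{PROP-C_6,K with respect to rr+}(ii). Your transplantation discards the bridge and shifts the loop component by $u_\ell(\vv)-u_\ell(\vv')=O(\sqrt{\ell})$ to restore continuity; all the error terms are controlled exactly as in the paper via the uniform $L^\infty$ bound from \eqref{EQ-GN infty} together with the lower bound on $\|u_\ell'\|_{2,\G_\ell}$ (the argument of \eqref{EQ-u_n' norm bounded away 0}), which is needed to turn the additive $O(\sqrt{\ell})$ errors into relative $o(1)$ ones. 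Two small remarks: your invocation of Lemmas \ref{LEM- max optimizer on bridge} and \ref{LEMMA-optimizer on half-lines} is inessential -- you only use them to identify $u_\ell(\vv)$ with $M_\h$, and the construction works verbatim with $u_\ell(\vv)$ in place of $M_\h$; and the passage to $|v_\ell|$ is likewise unnecessary since $Q$ depends only on norms, which are unchanged under taking absolute values. Both your argument and the paper's implicitly use that the constant in \eqref{EQ-modified GN} can be taken uniform in $\ell$ as $\ell\to0$; this is harmless here (the compact cores have uniformly bounded length) and is a shared, not a new, gap.
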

	
	\begin{proof}
		If $C_{\K_{\ell}}=C_\rr$, for some $\ell>0$, then the statement follows by Lemma \ref{LEMMA-monotonicity C_6 in G_l}. Assume, then, $C_{\K_\ell}>C_\rr$, for every $\ell>0$. By Proposition \ref{PROP-C_6,K with respect to rr+}, there exists a sequence $\{v_\ell\}_{\ell>0}$ such that $v_\ell\in H^1(\G_\ell)$, $v_\ell>0$ and $Q(v_\ell)=C_{\K}(\G_\ell)$. Also, by homogeneity of $Q$, we can set $\|v_\ell\|_{2,\G_\ell}^2=\mu_{\K_\ell}$, so that
		\begin{equation}
			\label{EQ-optimal quotient =3}
			\frac{\|v_\ell\|_{6,\K_\ell}^6}{\|v_\ell'\|_{2,\G_\ell}^2}=3\,\qquad\forall\ell>0\,.
		\end{equation}
		Combining \eqref{EQ-optimal quotient =3} with the modified Gagliardo-Nirenberg inequality \eqref{EQ-modified GN} leads to
		\begin{equation*}
			3=\frac{\|v_\ell\|_{6,\K_\ell}^6}{\|v_\ell'\|_{2,\G_\ell}^2}\leq\frac{\|v_\ell\|_{6,\G_\ell}^6}{\|v_\ell'\|_{2,\G_\ell}^2}\leq\frac{3\Big(\frac{\mu_{\K_\ell}}{\mu_\rr}\Big)^2\|v_\ell'\|_{2,\G_\ell}^2+C\sqrt{\mu_\rr}}{\|v_\ell'\|_{2,\G_\ell}^2}=3\left(\frac{\mu_{\K_\ell}}{\mu_\rr}\right)^2+\frac{C\sqrt{\mu_\rr}}{\|v_\ell'\|_{2,\G_\ell}^2}
		\end{equation*}
		and, rearranging terms, we get
		\begin{equation}
			\label{EQ-bound on v ell'}
			3\left(1-\frac{\mu_{\K_\ell}^2}{\mu_\rr^2}\right)\|v_\ell'\|_{2,\G_\ell}^2\leq C\sqrt{\mu_\rr}\,.
		\end{equation}
		As a consequence, if $\limsup_{\ell\to0}\|v_\ell'\|_{2,\G_\ell}=\infty$, then \eqref{EQ-bound on v ell'} immediately implies $\mu_{\K_\ell}\to\mu_\rr$, that is $C_{\K_\ell}\to C_\rr$.
		
		On the other hand, assume that $\|v_\ell'\|_{2,\G_\ell}$ is bounded. Define a new graph $\wt{\G}_\ell$ with compact core $\wt{\K}_\ell$ as follows: for every $\ell>0$, replace the cut edge $\mathcal{B}_\ell$ of $\G_\ell$ with two edges, say $\mathcal{B}_\ell^1,\mathcal{B}_\ell^2$, joining the same vertices, each of length $2\ell$. Furthermore, let $\wt{v}_\ell$ be the function in $H^1(\wt{\G}_\ell)$ defined by
		\begin{equation*}
			\wt{v}_\ell(x):=\begin{cases}
			v_\ell(x) & \text{if }x\notin\mathcal{B}_\ell^1\cup\mathcal{B}_\ell^2\\[.2cm]
			v_\ell(\frac{x}{2}) & \text{if }x\in\mathcal{B}_\ell^i,\quad i=1,2\,.
			\end{cases}
		\end{equation*}
		Now,
		\begin{equation}
		\label{EQ- norm p v tilde}
		\begin{split}
			\|\wt{v}_\ell\|_{p,\wt{\K}_\ell}^p=&\int_{\Gamma_\ell}|\wt{v}_\ell|^p\,dx+\sum_{i=1}^2\int_{\mathcal{B}_\ell^i}|\wt{v}_\ell|^p\,dx=\int_{\Gamma_\ell}|v_\ell|^p\,dx+2\int_{0}^{2\ell}|\wt{v}_\ell(x)|^p\,dx\\
			=&\int_{\Gamma_\ell}|v_\ell|^p\,dx+2\int_{0}^{2\ell}|v_\ell(x/2)|^p\,dx=\|v_\ell\|_{p,\K_\ell}^p+3\int_{\mathcal{B}_\ell}|v_\ell|^p\,dx\,,
		\end{split}
		\end{equation}
		and
		\begin{equation}
			\label{EQ- norm of v tilde '}
			\begin{split}
			\|\wt{v}_\ell'\|_{2,\wt{\G}_\ell}^2=&\int_{\wt{G}_\ell\backslash\mathcal{B}_\ell^1\cup\mathcal{B}_\ell^2}|\wt{v}_\ell'|^2\,dx+\sum_{i=1}^{2}\int_{\mathcal{B}_\ell^i}|\wt{v}_\ell'|^2\,dx=\int_{\G_\ell\backslash\mathcal{B}_\ell}|v_\ell'|^2\,dx+2\int_{0}^{2\ell}|\wt{v}_\ell'(x)|^2\,dx\\
			=&\int_{\G_\ell\backslash\mathcal{B}_\ell}|v_\ell'|^2\,dx+\frac{1}{2}\int_{0}^{2\ell}|v_\ell'(x/2)|^2\,dx=\|v_\ell'\|_{2,\G_\ell}^2\,.	
			\end{split}
		\end{equation}
		Since by construction $\wt{\G}_\ell$ admits a cycle-covering, then $C_{\wt{\K}_\ell}=C_\rr$ by Proposition \ref{PROP-C_6,K with respect to rr+}, which, combined with \eqref{EQ- norm p v tilde} and \eqref{EQ- norm of v tilde '}, entails that
		\begin{equation}
			\label{EQ-GN after double bridge}
			C_\rr\geq Q(\tilde{v}_\ell) =\frac{\|v_\ell\|_{6,\G_\ell}^6+3\int_{\mathcal{B}_\ell}|v_\ell|^6\,dx}{\Big(\|v_\ell\|_{2,\G_\ell}^2+3\int_{\mathcal{B}_\ell}|v_\ell|^2\,dx\Big)^2\|v_\ell'\|_{2,\G_\ell}^2}\,.
		\end{equation}
		In addition, given that $\|v_\ell'\|_{2,\G_\ell}$ is bounded by assumption, \eqref{EQ-GN infty} and \eqref{EQ-bound mu tilde resto del mondo} ensure that
		\begin{equation*}
			\|v_\ell\|_{\infty,\G_\ell}\leq \sqrt{2}\|v_\ell\|_{2,\G_\ell}^{1/2}\|v_\ell'\|_{2,\G_\ell}^ {1/2}\leq C\,,\qquad\forall\ell>0,
		\end{equation*}
		so that, as $\ell\to0$,
		\begin{equation*}
			\begin{split}
			\int_{\mathcal{B_\ell}}|v_\ell|^6\,dx&\leq\|v_\ell\|_{\infty,\G_\ell}^6\ell\to0\\
			\int_{\mathcal{B_\ell}}|v_\ell|^2\,dx&\leq\|v_\ell\|_{\infty,\G_\ell}^2\ell\to0\,.
			\end{split}
		\end{equation*}
		Hence, passing to the limit in \eqref{EQ-GN after double bridge},
		\[
			C_\rr\geq\limsup_{\ell\to0}\frac{\|v_\ell\|_{6,\K_\ell}^6}{\|v_\ell\|_{2,\G_\ell}^4\|v_\ell'\|_{2,\G_\ell}^2}=\limsup_{\ell\to0}C_{\K_\ell}\geq\liminf_{\ell\to0}C_{\K_\ell}\geq C_\rr
		\]
		thus concluding the proof.
	\end{proof}
	
	\begin{rem}
	 Proposition \ref{PROP-METRIC SIGNPOST 2} clearly holds as well replacing the circle $\Gamma$ with a generic compact graph admitting a cycle-covering.
	\end{rem}

	\begin{proof}[Proof of Theorem \ref{THM3}]
	 Given $\varepsilon>0$, the existence of $\G_\varepsilon^1$ and $\G_\varepsilon^2$ is a direct consequence of Propositions \ref{PROP-METRIC SIGNPOST 1} and \ref{PROP-METRIC SIGNPOST 2}.
	\end{proof}


\begin{thebibliography}{99}

\bibitem{ADST}
R. Adami, S.Dovetta, E. Serra, P. Tilli, 
Dimensional crossover with a continuum of critical exponents for NLS on doubly periodic metric graphs, {\em Analysis and PDEs}, to appear.

\bibitem{AST-CVPDE}
R. Adami, E. Serra, P. Tilli,
NLS ground states on graphs,
\emph{Calc. Var. Partial Differential Equations} {\bf 54} (2015), no. 1, 743--761.

\bibitem{AST-JFA}
R. Adami, E. Serra, P. Tilli,
Threshold phenomena and existence results for NLS ground states on metric graphs,
\emph{J. Funct. Anal.} {\bf 271} (2016), no. 1, 201--223.  

\bibitem{AST-CMP}
R. Adami, E. Serra, P. Tilli, 
Negative energy ground states for the L2-critical NLSE on metric graphs,
\emph{Comm. Math. Phys.} {\bf 352} (2017), no. 1, 387--406.

\bibitem{BK}
G. Berkolaiko, P. Kuchment,
\emph{Introduction to quantum graphs},
Mathematical Surveys and Monographs 186, American Mathematical Society, Providence, RI, 2013.

\bibitem{BCT-p}
W. Borrelli, R. Carlone, L. Tentarelli,
Nonlinear Dirac equation on graphs with localized nonlinearities: bound states and nonrelativistic limit,
arXiv:1807.06937 [math.AP] (2018).

\bibitem{CDS-p}
C. Cacciapuoti, S. Dovetta, E. Serra,
Variational and stability properties of constant solutions to the NLS equation on compact metric graphs,
{\em Milan Journal of Mathematics}, to appear.

\bibitem{CFN-N}
C. Cacciapuoti, D. Finco, D. Noja,
Ground state and orbital stability for the NLS equation on a general starlike graph with potentials,
\emph{Nonlinearity} {\bf 30} (2017), no. 8, 3271--3303.

\bibitem{C}
T. Cazenave,
\emph{Semilinear Schr\"odinger equations},
Courant Lecture Notes in Mathematics 10, American Mathematical Society, Providence, RI, 2003.

\bibitem{DELL-JLMS}
J. Dolbeault, M.J. Esteban, A. Laptev, L. Michael,
One-dimensional Gagliardo-Nirenberg-Sobolev inequalities: remarks on duality and flows,
\emph{J. Lond. Math. Soc. (2)} {\bf 90} (2014), no. 2, 525-550.

\bibitem{D-JDE}
S. Dovetta,
Existence of infinitely many stationary solutions of the $L^2$-subcritical and critical NLSE on compact metric graphs,
\emph{J. Differential Equations} {\bf 264} (2018), no. 7, 4806--4821.
 
\bibitem{DT-p}
S. Dovetta, L. Tentarelli,
Ground states of the $L^2$-critical NLS equation with localized nonlinearity on a tadpole graph,
arXiv:1804.11107 [math.AP] (2018).

\bibitem{D2-p}
A. Duca,
Global exact controllability of the bilinear Schr\"odinger potential type models on quantum graphs,
arXiv:1710.06022 [math.OC] (2017).

\bibitem{KP-JDE}
A. Kairzhan, D.E. Pelinovsky,
Nonlinear instability of half-solitons on star graphs,
\emph{J. Differential Equations} {\bf 264} (2018), no. 12, 7357--7383.

\bibitem{LLS-JMAA}
Y. Li, F. Li, J. Shi,
Ground states of nonlinear Schr\"odinger equation on star metric graphs,
\emph{J. Math. Anal. Appl.} {\bf 459} (2018), no. 2, 661--685.

\bibitem{MP-AMRX}
J. L. Marzuola, D.E. Pelinovsky,
Ground state on the dumbbell graph,
arXiv:1509.04721 [math.AP] (2017).

\bibitem{MNS-APDE}
D. Mugnolo, D. Noja, C. Seifert,
Airy-type evolution equations on star graphs,
\emph{Anal. PDE} {\bf 11} (2018), no. 7, 1625--1652.

\bibitem{ST-JDE}
E. Serra, L. Tentarelli, 
Bound states of the NLS equation on metric graphs with localized nonlinearities,
\emph{J. Differential Equations} {\bf 260} (2016), no. 7, 5627--5644.

\bibitem{ST-NA}
E. Serra, L. Tentarelli,
On the lack of bound states for certain NLS equations on metric graphs,
\emph{Nonlinear Anal.} {\bf 145} (2016), 68--82.

\bibitem{T-JMAA}
L. Tentarelli, NLS ground states on metric graphs with localized nonlinearities,
\emph{J. Math. Anal. Appl.} {\bf 433} (2016), no. 1, 291--304.

\end{thebibliography}
\end{document}